\newtheorem{Theorem}{Theorem}[section]
\newtheorem{Lemma}[Theorem]{Lemma}
\newtheorem{Proposition}[Theorem]{Proposition}
\newtheorem{Remark}[Theorem]{Remark}
\numberwithin{equation}{section}
\def\be{\begin{equation}}
	\def\ee{\end{equation}}
\def\ben{\begin{eqnarray}}
	\def\een{\end{eqnarray}}
\newcommand{\ncom}{\newcommand}
\ncom{\n}{\normalfont}
\ncom{\N}{\mathbb{N}}
\ncom{\Lc}{\mathcal}
\ncom{\wt}{\widetilde}
\ncom{\Af}{\boldsymbol{A}}
\ncom{\Bf}{\boldsymbol{B}}
\ncom{\Hf}{\mathbf{H}}
\ncom{\Pf}{\boldsymbol{P}}
\ncom{\ui}{\boldsymbol{u}_\infty}
\ncom{\vi}{\boldsymbol{v}_\infty}
\ncom{\wi}{\boldsymbol{w}_\infty}
\ncom{\yi}{\boldsymbol{y}_\infty}
\ncom{\zi}{\boldsymbol{z}_\infty}
\ncom{\fbi}{\boldsymbol{f}_\infty}
\ncom{\pbi}{\boldsymbol{p}_\infty}
\ncom{\Lb}{\mathbb{L}^2(\Omega)}
\ncom{\Hoz}{\mathbb{H}^1_0(\Omega)}
\ncom{\Hzz}{\mathbb{H}^2(\Omega)}
\ncom{\Vf}{\mathbb{V}}
\ncom{\no}{\nonumber}
\ncom{\ub}{\boldsymbol{u}}
\ncom{\yb}{\boldsymbol{y}}
\ncom{\zb}{\boldsymbol{z}}
\ncom{\vb}{\boldsymbol{v}}
\ncom{\fb}{\boldsymbol{f}}
\ncom{\wb}{\boldsymbol{w}}
\ncom{\T}{\mathbb{T}}
\ncom{\C}{\mathbb{C}} 
\ncom{\Hb}{\mathbb{H}}
\ncom{\Pb}{\mathbb{P}}
\ncom{\V}{\mathbb{V}}
\ncom{\U}{\mathbb{U}}
\ncom{\Ac}{\mathcal{A}}
\ncom{\Bc}{\mathcal{B}}
\ncom{\Pc}{\mathcal{P}}
\ncom{\af}{\boldsymbol{a}}
\ncom{\pf}{\boldsymbol{p}}
\newcommand{\vertiii}[1]{{\left\vert\kern-0.25ex\left\vert\kern-0.25ex\left\vert #1 
		\right\vert\kern-0.25ex\right\vert\kern-0.25ex\right\vert}}
\long\def\/*#1*/{}
\title[Stabilizability of 2D and 3D Navier-Stokes equations]{Stabilizability of 2D and 3D Navier-Stokes equations with memory around a non-constant steady state}
\date{\today}
\author{{WASIM AKRAM$^\dag$, Manika Bag$^\ddag$,}
	\and{Manil T. Mohan$^\dag$}}
\thanks{$\dag$ Department of Mathematics, Indian Institute of Technology Roorkee, Uttarakhand, 247667, India, Email- {\normalfont{ wakram2k11@gmail.com; maniltmohan@ma.iitr.ac.in}}\\
$\ddag$ Department of Mathematics, Indian Institute of Science, Education and Research, Thiruvananthapuram,  Kerala, 695551, India, Email- {\normalfont {manikabag19@iisertvm.ac.in}}\\
Dr. Wasim is supported by NBHM (National Board of Higher Mathematics, Department of Atomic Energy) postdoctoral fellowship, No. 0204/16(1)(2)/2024/R\&D-II/10823. Ms. Manika Bag would like to thank IIT Roorkee for providing stimulating scientific environment and resources.}
\renewcommand{\tocsection}[3]{%
	\indentlabel{\@ifnotempty{#2}{\bfseries\ignorespaces#1 #2\quad}}\bfseries#3}
\renewcommand{\tocsubsection}[3]{%
	\indentlabel{\@ifnotempty{#2}{\ignorespaces#1 #2\quad}}#3}
\newcommand\@dotsep{4.5}
\def\@tocline#1#2#3#4#5#6#7{\relax
	\ifnum #1>\c@tocdepth 
	\else
	\par \addpenalty\@secpenalty\addvspace{#2}%
	\begingroup \hyphenpenalty\@M
	\@ifempty{#4}{%
		\@tempdima\csname r@tocindent\number#1\endcsname\relax
	}{%
		\@tempdima#4\relax
	}%
	\parindent\z@ \leftskip#3\relax \advance\leftskip\@tempdima\relax
	\rightskip\@pnumwidth plus1em \parfillskip-\@pnumwidth
	#5\leavevmode\hskip-\@tempdima{#6}\nobreak
	\leaders\hbox{$\m@th\mkern \@dotsep mu\hbox{.}\mkern \@dotsep mu$}\hfill
	\nobreak
	\hbox to\@pnumwidth{\@tocpagenum{\ifnum#1=1\bfseries\fi#7}}\par
	\nobreak
	\endgroup
	\fi}
\renewcommand\csname r@tocindent0\endcsname{0pt}
\def\l@subsection{\@tocline{2}{0pt}{2.5pc}{5pc}{}}
\begin{document}
	
	\pagenumbering{arabic}

	\begin{abstract}
   In this article, we investigate the stabilizability of the two- and three-dimensional  Navier-Stokes equations 
   with memory effects around a non-constant steady state using a localized interior control. The system is first linearized around a non-constant steady state and  then reformulated into a coupled system by introducing a new variable to handle the integral term. Due to the presence of variable coefficients in the linear operator, the rigorous computation of eigenvalues and eigenfunctions becomes infeasible. Therefore, we concentrate on the principal operator, and investigate its analyticity and spectral properties. We establish a feedback stabilization result for the principal system, ensuring a specific decay rate. Using the feedback operator derived from this analysis, we extend the approach to the full system, constructing a closed-loop system. By proving a suitable regularity result and applying a fixed-point argument, we ultimately demonstrate the stabilizability of the full system. We also discuss the stabilizability of the corresponding vorticity equation around a non-constant steady state. 
    \end{abstract}
\maketitle
\noindent \textbf{Keywords.} Navier-Stokes equations with memory; Feedback stabilization; Distributed control; Vorticity equation with memory.\\[2mm]
\noindent \textbf{MSC Classification (2020).} 35Q30 $\cdot$ 93D15 $\cdot$ 49J20 $\cdot$ 93C10


\section{Introduction}
The Navier-Stokes equations (NSEs) are fundamental to the study of fluid dynamics, describing the motion of fluids such as liquids, gases, and plasmas. NSEs are also pivotal in engineering applications, enabling the design of efficient systems such as aircraft, ships, pipelines, and chemical reactors. It is well recognized that in many physical systems, the current state of the fluid can depend on its past behavior. This motivates the study of the NSEs with memory (known as Oldroyd fluid flow equations in the literature), where the memory term or kernel accounts for the influence of past states on the current dynamics. The kernel often represents material properties such as viscoelastic effects, relaxation times, or other history-dependent characteristics of the fluid.

\subsection{Model problem and formulation}
The stabilization of Navier–Stokes flows highlights recent significant advancements in the mathematical theory of stabilizing Newtonian fluid flows. Finite-dimensional feedback controllers are employed to exponentially stabilize the equilibrium solutions of NSEs, effectively reducing or eliminating turbulence (see \cite{BarbuStabNSEBook} and references therein). In the present work, we aim to discuss the stabilizability of NSEs with kernel (non-Newtonian fluids) for the both velocity and vorticity equations by using finite/infinite feedback interior control.

Let $\Omega\subset\mathbb{R}^d,$ $d\in \{2,3\},$ be bounded domain with $C^2-$boundary $\Gamma.$ Let $\mathcal{O}\subset \Omega$ be an open subset and $\chi_{\mathcal{O}}$ denote the characteristic function. We aim to study feedback stabilizability of the following NSEs with memory:

\begin{equation} \label{eq:NSE_m_mod}
	\left\{
	\begin{aligned}
		& \yb_t -\eta \Delta \yb + (\yb\cdot \nabla) \yb +\nabla p - \kappa \int_0^t e^{-\lambda (t-s)}\Delta \yb(s) ds = \fb_\infty + \ub\chi_{\mathcal{O}} \text{ in } \Omega\times (0,\infty),\\
		& \nabla\cdot \yb=0 \text{ in } \Omega\times (0,\infty), \\
		& \yb=0  \text{ on } \Gamma\times (0,\infty), \\
		& \yb(0)=\yb_0 \text{ in }\Omega,
	\end{aligned}\right.
\end{equation}
where $\yb$ is the state variable (velocity field), $p$ is the pressure term and $\ub$ is the control acting in the interior $\mathcal{O}$ of the domain $\Omega.$ Here, $\eta, \kappa,$ and $\lambda$ are given positive constants, and $\fb_\infty \in (L^2(\Omega))^d$ is a given stationary force term. For the uniqueness of pressure, one may impose the condition $\int_{\Omega}p(x,t)dx=0$ for all $t\in(0,\infty)$.  In the first part of this article, our aim is to study the stabilizability of \eqref{eq:NSE_m_mod} around a non-constant steady state $\yb_{\infty}$  and then for the corresponding vorticity equation. In the literature, the model \eqref{eq:NSE_m_mod} is also known as equations of motion arising in Oldroyd fluids of order one (\cite{MTM20SAA,MTM00EECT}). The existence and uniqueness of a global weak solution of the above problem is discussed in the work \cite[Theorem 3.4]{MTM20SAA}.

Now, we formally derive the corresponding steady state equation. To obtain it, we first observe that
\begin{align*}
	\lim_{t\to \infty} \int_0^t e^{-\lambda (t-s)}\Delta \yb(s) ds = \lim_{t\to \infty}  \frac{\int_0^t e^{\lambda s}\Delta \yb(s) ds}{e^{\lambda t}} =  \lim_{t\to \infty} \frac{e^{\lambda t} \Delta \yb (t)}{\lambda e^{\lambda t}} = \frac{1}{\lambda} \Delta \yb_\infty,
\end{align*}
and 
\begin{align*}
	\frac{1}{\lambda} \Delta \yb_\infty =  
	\frac{1}{\lambda} (1-e^{-\lambda t}) \Delta \yb_\infty + \frac{1}{\lambda} e^{-\lambda t} \Delta \yb_\infty   =   \int_0^t  e^{-\lambda (t-s)} \Delta \yb_\infty ds+ \frac{1}{\lambda} e^{-\lambda t} \Delta \yb_\infty.
\end{align*}
Therefore, we consider the following  steady-state system: 
\begin{equation} \label{eq:StdNSE}
	\left\{
	\begin{aligned}
		&  - \left(\eta+\frac{\kappa}{\lambda}\right) \Delta \yb_\infty + (\yb_\infty \cdot \nabla) \yb_\infty +\nabla p_\infty  = \fb_\infty \text{ in } \Omega,\\
		& \nabla \cdot \yb_\infty =0 \text{ in } \Omega,  \\
		& \yb=0  \text{ on } \Gamma\times (0,\infty).
	\end{aligned}\right.
\end{equation}
The well-posedness of the system  \eqref{eq:StdNSE} is well-known and can be found in literature (e.g. see \cite[Theorem 10.1]{TemamNSE01},\cite{TemamSmBk}).  In fact,   for a given $\fb_\infty \in \Lb, $ for sufficiently  large $\eta+\frac{\kappa}{\lambda},$ there exists a unique strong solution $\yi \in \Hzz\cap \Vf$ satisfying 
\begin{align*}
	\|\yi\|_{\Hzz} \le C \|\fbi\|_{\Lb},
\end{align*}
for some $C>0$. Here, the spaces are defined as 
\begin{align*}
	& \Lb:=(L^2(\Omega))^d, \quad \mathbb{L}^p(\Omega)= (L^p(\Omega))^d, \quad 1\le p \le \infty,\\
	& \Hoz:= (H^1_0(\Omega))^d, \quad \Hb^s(\Omega):= (H^s(\Omega))^d, \quad s\in \mathbb{N},\\
	& \Hb:=\{ \boldsymbol{z} \in \Lb\, :\, \nabla \cdot \boldsymbol{z} =0, \, \boldsymbol{z}\cdot \boldsymbol{n}=0 \text{ on } \Gamma \}, \\
	& \Vf:= \{ \boldsymbol{z} \in \Hoz\,:\, \nabla \cdot \boldsymbol{z} =0 \},
\end{align*}
where $\boldsymbol{n}$ is the unit outward drawn normal to the boundary $\Gamma$ and $\boldsymbol{z}\cdot \boldsymbol{n}\big|_{\Gamma}$ is understood in the sense of trace (\cite[P. 15]{TemamSmBk},\cite{TemamNSE01}).  The norms in $\Hb$ and $\Vf$ are given by $\|\boldsymbol{z}\|_{\Hb}=\left(\int_{\Omega}|\boldsymbol{z}(x)|^2dx\right)^{1/2}$ and $\|\boldsymbol{z}\|_{\Vf}=\left(\int_{\Omega}|\nabla\boldsymbol{z}(x)|^2dx\right)^{1/2}$, respectively. 
To know the importance of the spaces defined above and for NSEs, one can see see the book \cite{TemamNSE01}. The system \eqref{eq:NSE_m_mod} is said to be exponentially stabilizable with decay $-\nu<0$ around the steady state pair $(\yb_\infty,p_\infty)$ of \eqref{eq:StdNSE} if there exists a control $\ub$ in $L^2(0,\infty; \Lb)$ satisfying 
\begin{align*}
	\|\ub \|_{L^2(0,\infty; \Lb)} \le C \|\yb_0 - \yb_\infty\|_{\Hb},
\end{align*}
for some $C>0$  such that the corresponding solution pair $(\yb,p)$ satisfies
\begin{align}\label{eqdef:stablzb}
	\|\yb(t)-\yb_\infty\|_{\Vf}+\|p(t)-p_\infty\|_{H^1(\Omega)} \le C_1(\|\yb_0 - \yb_\infty\|_{\Vf} ,\|\yb_\infty\|_{\Hzz}) e^{-\nu t}\text{ for all }t>0, 
\end{align}
and for some $C_1= C_1(\|\yb_0 - \yb_\infty\|_{\Vf},\|\yb_\infty\|_{\Hzz})>0.$

\subsection{Literature survey and motivations}
There are vast literature available on NSEs and on the feedback stabilizability, for references, one can see \cite{BreKun20,BreKunPf19,SChowErve,JPR6-2017,JPR15SICON,BBSW15,BarbSICON12,JPRSheThev,BarbSICON11,BadESAIM9,AKR24}, and references therein. However, we will mention a few of them that align in the present case. Exponential stabilization of the linearized Navier–Stokes equations around an unstable stationary solution by using boundary control is studied in \cite{JPR,JPR2010}. The authors in \cite{CRR-SICON,CMRR-JDE}  investigate stabilization of one-dimensional compressible NSEs around a constant steady state. Boundary feedback stabilization around an unstable stationary solution for a two-dimensional fluid flow governed by  NSEs with mixed boundary conditions can be found in \cite{JPR15SICON}. Local stabilization of one-dimensional compressible NSEs around a constant steady solution established in \cite{MRR_ADE}, while of three-dimensional viscous Burgers equation with memory is considered in \cite{WKR}. Based on the spectral properties of the Oseen–Stokes operator, \cite{MuntNSEMem} study NSEs with fading memory to study the stabilizability by boundary Dirichlet feedback controllers. In the later article, the authors studied the integral equations by writing it in a coupled form and then using semigroup theory. Here, the authors obtain a local stabilization result. For a non-linear parabolic system, using finite-dimensional feedback controller, stabilizability is demonstrated in \cite{Badra-T}. Breiten and Kunisch in \cite{BreKun14} focused on stabilizability by a Riccati based approach, and this approach is also extensively used in \cite{Las-Tr-91,Lasiecka1,Lasieckabook}. Munteanu, in \cite{Mont}, considered a semilinear heat equation with fading memory  to stabilize the system by using finite-dimensional boundary feedback controllers. A non-linear feedback controller by solving a Hamilton-Jacobi-Bellman and algebraic Riccati equation is constructed to stabilize two-dimensional Burgers equation by the authors in \cite{Thev-JPR}. 

Local stabilization of the viscous Burgers equation with memory is established in \cite{WKR} by reformulating the equation in a coupled form. Following their analysis, a similar approach could potentially be applied to NSEs with memory. However, their discussion is limited to stabilizability around the zero steady state. This raises the intriguing question of stabilizability around a non-constant steady state, which serves as the primary focus of this article. A key challenge arises due to the presence of variable coefficients in the operator of the coupled equations, which prevents the explicit determination of eigenvalues. Consequently, the existing analysis cannot be directly applied. We address this challenge in the next subsection, where we delve into the stabilizability around a non-constant steady state.

\subsection{Contributions, Methodology, and Novelty}
In this article, we focus on the stabilizability of velocity as well as vorticity equation by using localized interior controls and around a corresponding non-constant steady state. We first focus on the velocity equation.

To study the exponential stabilizability of $\yb$ around the non-constant steady state $\yb_\infty$, that is, a strong solution of the stationary NSEs given in \eqref{eq:StdNSE}, we first linearize \eqref{eq:NSE_m_mod} around $\yb_\infty$ and obtain a equation in $\zb=\yb-\yb_\infty$ and $q=p-p_\infty,$ which is also a system with memory:
\begin{equation} \label{eq:NSE_m_mod_z}
	\left\{
	\begin{aligned}
		& \zb_t -\eta \Delta \zb + (\zb\cdot \nabla) \zb + (\yb_\infty\cdot \nabla) \zb + (\zb\cdot \nabla) \yb_\infty +\nabla q  \\
		& \qquad \qquad - \kappa \int_0^t e^{-\lambda (t-s)}\Delta \zb(s) ds -\frac{\kappa}{\lambda} e^{-\lambda t}\Delta \yb_\infty=  \ub\chi_{\mathcal{O}} \text{ in } \Omega\times (0,\infty),\\
		& \nabla\cdot \zb=0 \text{ in } \Omega\times (0,\infty), \\
		& \zb=0  \text{ on } \Gamma\times (0,\infty), \\
		& \zb(0)=\yb_0-\yb_\infty=:\zb_0 \text{ in }\Omega.
	\end{aligned}\right.
\end{equation}
Therefore, we aim to find a feedback control such that the corresponding solution $\zb$ of \eqref{eq:NSE_m_mod_z} goes to zero exponentially in a suitable norm as $t\to \infty,$ in particular, to find a control so that $(\zb,q),$ the corresponding solution of \eqref{eq:NSE_m_mod_z} satisfies an analogous estimate as \eqref{eqdef:stablzb}, that is,
\begin{align*}
	\|\zb(t)\|_{\Vf} + \|q(t)\|_{H^1(\Omega)}\le C_1(\|\zb_0\|_{\Vf},\|\yb_\infty\|_{\Hzz}) e^{-\nu t}, \ \text{ for all }\ t>0,
\end{align*}
and for some $C_1= C_1(\|\zb_0\|_{\Vf},\|\yb_\infty\|_{\Hzz})>0.$
Here, we follow a similar technique as used in \cite{WKR} although they only discuss stabilizability around a zero steady state. We first write the integral equation as coupled system by introducing a new variable to the memory term as  $\wb(t):=\int_0^t e^{-\lambda (t-s)} \zb(s) ds,$ and on differentiating we obtain
\begin{align*}
	\wb_t+\lambda\wb-\zb=0.
\end{align*}
Thus we obtained a coupled system on $\Hf:=\Hb\times \Hb$ (equipped with the usual inner product and norm): 
\begin{equation} \label{eqn:non-linear intromain}
	\left\{
	\begin{aligned}
		& \zb_t - \eta \Delta \zb + (\zb\cdot\nabla) \zb +(\yb_\infty\cdot \nabla) \zb + (\zb\cdot \nabla) \yb_\infty \\
		& \qquad \qquad - \kappa \Delta \boldsymbol{w} -\frac{\kappa}{\lambda} e^{-\lambda t}\Delta \yb_\infty+\nabla p = \boldsymbol{u}\chi_{\mathcal{O}} \text{ in } \Omega\times (0,\infty), \\
		& \wb_t+\lambda\wb - \zb= 0 \text{ in } \Omega\times (0,\infty), \\
		&\nabla\cdot \zb=0, \quad\& \quad \nabla\cdot \boldsymbol{w}=0 \text{ in } \Omega\times (0,\infty), \\
		& \zb=0, \quad\& \quad \wb=0  \text{ on } \Gamma\times (0,\infty), \\
		& \zb(0)=\zb_0,  \quad\& \quad \wb(0)=0\text{ in }\Omega.
	\end{aligned}\right.
\end{equation}
Let $\Pb:\Lb \to \Hb$ be the Helmholtz-Hodge (or Leray) projection, and denote 
\begin{align}
	A_0:=-\Pb\Delta    \text{ with } D(A_0):=\Hzz \cap \Vf, 
\end{align}
as the Stokes operator (\cite{TemamNSE01}). 
Also  for $(\ub\cdot \nabla )\wb\in \Lb$, we denote   $B(\ub,\wb):=\Pb [(\ub\cdot \nabla )\wb].$ Then the above system can be written as
\begin{equation} \label{eq:NSENLinz}
	\left\{
	\begin{aligned}
		& \frac{d\zb}{dt}+\eta A_0\zb +B(\zb, \zb) +B(\yb_\infty, \zb) +B(\zb, \yb_\infty) \\
		& \qquad \qquad \qquad  + \kappa A_0 \wb + \frac{\kappa}{\lambda} e^{-\lambda t} A_0 \yb_\infty = \boldsymbol{v} \chi_O \text{ in } \Omega\times (0,\infty),\\
		& \frac{d\wb}{dt} +\lambda\wb - \zb =0 \text{ in } \Omega\times (0,\infty),\\
		& \zb =0 , \quad \& \quad \wb=0 \text{ on } \Gamma\times (0,\infty), \\
		& \zb(0)=\zb_0, \quad \& \quad \wb(0)=0 \text{ in }\Omega,
	\end{aligned}\right.
\end{equation} 
where $\boldsymbol{v}=\Pb \ub.$ To study the stabilizability with decay rate, our aim is to show that the corresponding unbounded operator generates an analytic semigroup and aim to study the spectral properties. However, if we write the above system as 
\begin{align*}
	\begin{pmatrix} \zb'(t) \\ \wb'(t) \end{pmatrix} =  \begin{pmatrix} \eta A_0 + A_1 & \kappa A_0 \\ I & -\lambda I \end{pmatrix} \begin{pmatrix} \zb(t) \\ \wb(t) \end{pmatrix} + \begin{pmatrix} B(\zb,\zb) + \frac{\kappa}{\lambda} e^{-\lambda t}A_0 \yb_\infty + \boldsymbol{v} \chi_{\mathcal{O}} \\ 0 \end{pmatrix},  
\end{align*}
with $A_1\zb= B(\yb_\infty, \zb) +B(\zb, \yb_\infty),$ then it is not easy to compute the eigenvalues of the operator and hence difficult to study spectral analysis. The method discussed in the book \cite[Section 3.3]{BarbuStabNSEBook} by V. Barbu, cannot be applied here as neither we can discuss the explicit behavior of the spectrum of $ \begin{pmatrix} \eta A_0 + A_1 & \kappa A_0 \\ I & -\lambda I \end{pmatrix} $ nor the operator $ \begin{pmatrix} \eta A_0 + A_1 & \kappa A_0 \\ I & -\lambda I \end{pmatrix} $ has compact resolvent. Although one could directly examine the linear part of the full system for stabilizability, the presence of variable coefficients in the operator significantly complicates the computation of its eigenvalues and eigenfunctions, making spectral analysis challenging. As a result, our approach may not be feasible in that case. To overcome this and manage this difficulty, instead of taking the whole linear operator, we focus only on the principal part, that is,
\begin{equation} \label{w4}
	\left\{
	\begin{aligned}
		& \frac{d\zb}{dt}+\eta A_0\zb   + \kappa A_0 \wb  = \boldsymbol{v} \chi_O \text{ in } \Omega\times (0,\infty),\\
		& \frac{d\wb}{dt} +\lambda\wb - \zb =0 \text{ in } \Omega\times (0,\infty),\\
		& \zb =0 , \quad \& \quad \wb=0 \text{ on } \Gamma\times (0,\infty), \\
		& \zb(0)=\zb_0, \quad \wb(0)=0 \text{ in }\Omega.
	\end{aligned}\right.
\end{equation} 
Therefore, in the next section, our focus will be on stabilizability of the system \eqref{w4} and to obtain a corresponding feedback control. The feedback operator obtained in this setting can then be utilized to analyze the stabilizability of the full system. Considering this type of principal system is new to the literature according to our knowledge and this idea helps to study the stabilizability of  evolution equations with memory around a non-constant steady state. By denoting  $Y(t):=\left( \begin{matrix}
	\zb(\cdot,t)\\ \wb(\cdot,t)
\end{matrix} \right)$, \eqref{w4} can be written as 
\begin{equation} \label{eqn:main_system}
	Y'(t)=\Ac Y(t)+\Bc \boldsymbol{v}(t) \, \text{ for all }\, t>0, \quad  Y(0)=\left( \begin{matrix}
		\zb_0\\ 0
	\end{matrix} \right):=Y_0,
\end{equation} 
for $Y_0\in \Hf$ and $\boldsymbol{v}\in L^2(0,\infty; \Hb)$, where the unbounded operator $(\Ac, D(\Ac))$ on $\Hf$ and the control operator $\Bc:\Hb\to\Hf$ are defined as
\begin{equation} \label{123}
	\Ac:=\left( \begin{matrix}
		-\eta A_0 & -\kappa A_0 \\ I & -\lambda I 
	\end{matrix} \right)\text{ with } D(\Ac)=\left\lbrace \begin{pmatrix} \zb \\ \wb \end{pmatrix}\in \Hf :\eta \zb+\kappa\wb \in D(A_0) \right\rbrace,
\end{equation}
and
\begin{equation}\label{eqcontrol} 
	\Bc\boldsymbol{v}:=\left( \begin{matrix}
		\boldsymbol{v}\chi_{\mathcal{O}}\\0
	\end{matrix} \right) \text{ for all }\boldsymbol{v}\in \Hb.
\end{equation}
Now, observe that the one can easily study the system \eqref{eqn:main_system} as the unbounded operator $(\Ac, D(\Ac))$ has constant coefficients and hence one can study the spectral properties as the eigenvalue problems for the Stokes operator $A_0$ is known to us. Thus, we focus on to study the stabilizability of the principal system \eqref{w4} or \eqref{eqn:main_system} with decay rate by using a Hautus type of Criterion (see Proposition \ref{prop:stab by Hautus}). Here, we obtain the decay rate $-\nu<0$ for any $\nu\in (0,\nu_0),$ where $\nu_0$ is an accumulation point of the eigenvalues of $\Ac$ (see Theorem \ref{th:sspecAna} and \eqref{eqlambda0}). Now, one can easily construct a (finite- or infinite-dimensional) feedback control by solving an optimal control problem and (degenerate/non-degenerate) algebraic Riccati equation as studied in Section 4 of \cite{WKR}. A non-linear feedback control by using a power series expansion method, which is a formal approximate solution to a Hamilton-Jacobi-Bellman equation, is obtained in \cite{Thev-JPR} to study feedback stabilizability of a two dimensional Burgers equation. We aim to find such a non-linear feedback control for the present model in a forthcoming paper.

Once, a feedback stabilizing control (in Theorem \ref{th:stb cnt_vel}) is obtained for the principal system, we plug it into the original system in coupled form, then studying regularity results and using Banach fixed point theorem, we establish that the closed loop system is exponentially stable (see Theorems \ref{th:vstab non lin} - \ref{th:mainresultStabIntegral}).

In the next part of this article, we move on to study the stabilizability of the corresponding vorticity equation obtained by taking `curl' in the model problem \eqref{eq:NSE_m_mod} around a non-constant steady state. Here the equation is obtained in a scalar form. Here also, we first consider the principal system of the linearized equation linearized around the non-cconstant steady state and show that the operator generates an analytic semigroup and study the spectral analysis. Then, the remaining analysis to obtain the stabilizability (see Theorems \ref{th:stab non lin} - \ref{th:stab non linIntgrl}) are similar to the analysis done in the case of velocity equation, and hence we don't discuss the details here.

The novelty of this article is that the feedback stabilization of a non-linear integro-differential equation around a non-constant steady state is new. Also, to the best of our knowledge,  the technique of considering the principal system where the spectral analysis can be done thoroughly is new to the literature. This technique can be applied to any evolution equation with memory of the type considered in this work, to study the stabilizability or controllability around a non-constant steady state where the semigroup and spectral properties of the principal operator can be studied. We also discuss a local well-posedness result of three-dimensional NSEs (with/without memory) in Remark \ref{rem:Solv3DNSE2}.


\subsection{Organization} 
The rest of the article is organized as follows. We discuss the semigroup and spectral properties of the principal operator $\Ac$ in Section \ref{sec:PS-NSE}. Subsequently, here, we also discuss the feedback stabilizability of the principal system. Using the feedback operator obtained in the stabilizability of the principal system, some regularity results for non-linear system, and the Banach fixed point theorem, Section \ref{sec:FS-NSE} concludes the local stabilization of NSEs with memory around a non-constant steady state. The stabilizability of vorticity equation around a non-constant steady state is considered in Appendix \ref{app:Vort}.

\section{Principal System} \label{sec:PS-NSE}
This section is devoted to study the stabilizability of the principal system \eqref{w4} and we obtain a feedback operator that will be used in the full system to show the stabilizability of the main system.

For the purpose of our analysis, recall the operator form \eqref{eqn:main_system} of \eqref{w4}, and the operators $(\Ac, D(\Ac))$ and $\Bc \in \mathcal{L}(\Hb,\Hf)$. The adjoint operator of $(\Ac, D(\Ac))$ on $\Hf$ is defined by $(\Ac^*, D(\Ac^*))$, where 
\begin{equation}\label{eqadjopt}
	\Ac^*=\left( \begin{matrix}
		-\eta A_0 & I \\ -\kappa A_0 & -\lambda I 
	\end{matrix} \right) \text{ and } D(\Ac^*)= \left\lbrace \begin{pmatrix} \zb \\ \wb \end{pmatrix} \in \Hf : \zb\in D(A_0) \right\rbrace,
\end{equation}
and the adjoint operator of $\Bc$, denoted by $\Bc^*$, belongs to $\mathcal{L}(\Hf,\Hb),$ and $\Bc^*$ is defined by 
\begin{equation}\label{eqadjcontrol}
	\Bc^*\left( \begin{matrix}
		\boldsymbol{\phi} \\ \boldsymbol{\psi} \end{matrix} \right)= \boldsymbol{\phi} \chi_\mathcal{O} \text{ for all } \left( \begin{matrix}
		\boldsymbol{\phi} \\ \boldsymbol{\psi} \end{matrix} \right)\in \Hf.
\end{equation}

\subsection{Analytic Semigroup and Spectral Analysis} \label{sec:spectral analysis}
This section focuses on the unbounded operator $(\Ac, D(\Ac))$ to show that it generates an analytic semigroup on $\Hf$ and that yields the well-posedness of the system  
\begin{equation}\label{eqwell1}
	Y'(t)=\Ac Y(t)+F(t) \, \text{ for all }\, t>0, \quad Y(0)=Y_0,
\end{equation} 
for any $Y_0\in \Hf$ and $F\in L^2(0,\infty; \Hf)$.
and then discusses the spectral analysis. These properties of $\Ac$ are crucial to discuss the stabilizability of \eqref{eqn:main_system}.

\begin{Theorem} \label{pps:existence}
	(a) The operator $(\Ac,D(\Ac))$ in \eqref{123} is densely defined and closed. The half plane $\lbrace \mu \in \mathbb{C} \mid \Re \mu>0 \rbrace$ is contained in the resolvent set of $\Ac$ and
	$$\| (\mu I-\Ac)^{-1}\|_{\mathcal{L}(\Hf)} \leq \frac{C}{\vert \mu\vert}, \text{ for all } \mu\in \C, \text{ with }  \Re(\mu)>0,$$
	for some $C>0$ independent of $\mu.$
	\noindent Consequently, $(\Ac,D(\Ac))$ generates an analytic semigroup $\{e^{t\Ac}\}_{t\ge 0}$ on $\Hf.$\\ 
	(b) Furthermore, for any $Y_0\in \Hf$ and any $F\in L^2(0,\infty; \Hf)$,  system  \eqref{eqwell1}
	admits  a unique solution $Y \in L^2(0,\infty; \Hf).$ Moreover, the solution $Y$ belongs to $ C([0,\infty); \Hf)$ with the representation 
	\begin{equation}\label{eqrep1} 
		Y(t)=e^{t\Ac}Y_0+\int_0^te^{(t-s)\Ac}F(s)ds, \text{ for all } t>0.
	\end{equation}
\end{Theorem}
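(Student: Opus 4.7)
My plan is to verify the three ingredients needed for an analytic semigroup generator—density of domain, closedness, and a sectorial resolvent bound—and then derive (b) from the standard inhomogeneous semigroup theory. Density of $D(\Ac)$ in $\Hf=\Hb\times\Hb$ is immediate from $D(A_0)\times D(A_0)\subset D(\Ac)$ (since for $\zb,\wb\in D(A_0)$ one has $\eta\zb+\kappa\wb\in D(A_0)$) together with the density of $D(A_0)$ in $\Hb$. Closedness is equally routine: for $(\zb_n,\wb_n)\in D(\Ac)$ with $(\zb_n,\wb_n)\to(\zb,\wb)$ and $\Ac(\zb_n,\wb_n)\to(f,g)$ in $\Hf$, the second row passes to the limit trivially; for the first, the convergence $\eta\zb_n+\kappa\wb_n\to\eta\zb+\kappa\wb$ in $\Hb$ combined with $-A_0(\eta\zb_n+\kappa\wb_n)\to f$ and the closedness of the self-adjoint operator $A_0$ identifies $\eta\zb+\kappa\wb\in D(A_0)$ with $-A_0(\eta\zb+\kappa\wb)=f$.

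The substantive step is the resolvent estimate. For $\mu\in\C$ with $\Re\mu>0$ and $(f,g)\in\Hf$, I propose to solve $(\mu I-\Ac)(\zb,\wb)=(f,g)$ by block elimination: from the second row $\zb=(\mu+\lambda)\wb-g$, and introducing $\xi:=\eta\zb+\kappa\wb=[\eta(\mu+\lambda)+\kappa]\wb-\eta g$, which must lie in $D(A_0)$. Eliminating $\zb$ and $\wb$ from the first row yields the scalar-coefficient equation
\[
\bigl(A_0+\alpha(\mu)I\bigr)\xi=f+\beta(\mu)g,\qquad \alpha(\mu):=\frac{\mu(\mu+\lambda)}{\eta(\mu+\lambda)+\kappa},\ \ \beta(\mu):=\frac{\mu\kappa}{\eta(\mu+\lambda)+\kappa},
\]
which reduces the problem to an inversion of the Stokes operator shifted by $\alpha(\mu)$. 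Since $A_0$ is positive self-adjoint with $\sigma(A_0)\subset[c,\infty)$ for some $c>0$, and since for $\Re\mu>0$ the parameter $\alpha(\mu)$ avoids the ray $(-\infty,-c]$ (a brief check: $\mathrm{Im}\,\alpha(\mu)=0$ forces $\mathrm{Im}\,\mu=0$, in which case $\alpha(\mu)>0$), the resolvent $(A_0+\alpha(\mu)I)^{-1}$ exists. Using $|\alpha(\mu)|\asymp|\mu|$ and $|\beta(\mu)|$ uniformly bounded on $\{\Re\mu>0\}$, together with the self-adjoint resolvent estimate in terms of the distance from $-\alpha(\mu)$ to $\sigma(A_0)$, one back-substitutes $\wb=(\xi+\eta g)/[\eta(\mu+\lambda)+\kappa]$ and $\zb=(\mu+\lambda)\wb-g$ to produce the desired bound $\|(\mu I-\Ac)^{-1}\|_{\mathcal{L}(\Hf)}\le C/|\mu|$.

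The estimate on the right half-plane extends, via a Neumann-series argument at each point $i\tau$ of the imaginary axis, to a sector $\{\mu\neq 0:|\arg\mu|<\pi/2+\theta\}$, and the standard sectorial-operator characterization then delivers the bounded analytic semigroup $\{e^{t\Ac}\}_{t\ge 0}$. For part (b), the variation-of-constants formula \eqref{eqrep1} and the strong continuity of the semigroup give a unique mild solution $Y\in C([0,\infty);\Hf)$; the $L^2(0,\infty;\Hf)$ integrability of $Y$ is not supplied by (a) alone, but will follow from the spectral analysis of the next subsection (which pins $\sigma(\Ac)$ strictly inside the left half-plane with accumulation point $-\nu_0<0$, hence $\|e^{t\Ac}\|\le Me^{-\omega t}$) via Young's convolution inequality applied to $F\in L^2(0,\infty;\Hf)$. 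The main obstacle I anticipate is the casework in the resolvent estimate: controlling $\mathrm{dist}(-\alpha(\mu),\sigma(A_0))$ from below by a quantity of order $|\mu|$ across both the large-$|\mu|$ regime (where the estimate is sharp) and the transition regime near the imaginary axis requires careful bookkeeping.
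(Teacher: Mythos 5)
The paper does not actually write out a proof here: it disposes of (a) by citing the analogous computation in \cite{WKR} (Proposition 3.1 and Theorem 3.2 there, for the Burgers operator with memory) and of (b) by citing \cite{BDDM}. Your proposal therefore supplies a genuine argument where the paper offers a pointer, and the argument you give is sound and essentially equivalent to the cited one: where \cite{WKR} diagonalizes in the eigenbasis $\{\Phi_k\}$ of $A_0$ and estimates the resulting $2\times 2$ resolvent blocks, you perform the same elimination once and for all via the Schur complement, reducing $(\mu I-\Ac)(\zb,\wb)=(f,g)$ to $(A_0+\alpha(\mu))\xi=f+\beta(\mu)g$ with $\xi=\eta\zb+\kappa\wb$; your formulas for $\alpha(\mu)$, $\beta(\mu)$ and your check that $\operatorname{Im}\alpha(\mu)=0$ forces $\mu$ real positive (hence $-\alpha(\mu)\notin\sigma(A_0)$) are correct. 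Two points deserve care. First, in the back-substitution you should \emph{not} estimate $\zb$ from $\zb=(\mu+\lambda)\wb-g$, which only yields $\|\zb\|\lesssim\|(f,g)\|_{\Hf}$; to get the required $1/|\mu|$ decay you must return to the first row, $\mu\zb=f-A_0\xi=f-A_0(A_0+\alpha(\mu))^{-1}(f+\beta(\mu)g)$, and use that $\|A_0(A_0+\alpha(\mu))^{-1}\|$ is uniformly bounded on $\{\Re\mu>0\}$ (this uses $\Re\alpha(\mu)\ge-\kappa/\eta^2$ and the uniform lower bound on $\operatorname{dist}(-\alpha(\mu),\sigma(A_0))$, both of which follow from your sign computation but are not automatic since $\Re\alpha(\mu)$ can be negative near the imaginary axis). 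Second, you are right that the $L^2(0,\infty;\Hf)$ integrability in (b) is not a consequence of analyticity alone but of the exponential stability of $\{e^{t\Ac}\}_{t\ge0}$, which rests on the spectral analysis of Theorem \ref{th:sspecAna} ($\sup\Re\sigma(\Ac)<0$); the paper's citation of \cite{BDDM} quietly absorbs this, so your explicit acknowledgement of the forward dependence is a point in your favour rather than a gap.
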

\begin{proof}
	The proof can be established by mimicking the proof of  \cite[Proposition 3.1 and Theorem 3.2]{WKR}, and then (b) follows from \cite[Proposition 3.1, Chapter 1, Part II]{BDDM}.
\end{proof}

Now, we discuss the behavior of the spectrum of $\Ac$ on $\Hf$.  
We recall the result (for details, see \cite[Theorem 2.24]{RRS16}) that 
there exists an orthonormal basis $\lbrace\Phi_m\rbrace_{m\in  \mathbb{N}}$ of $\Hb$ and sequence of positive real numbers $\lbrace \sigma_m\rbrace_{m \in \mathbb{N}}$ with $\sigma_m\rightarrow \infty$ as $m\rightarrow \infty,$ such that 
\begin{equation}\label{eqevlaplace}
	\left\lbrace
	\begin{aligned}
		& 0<\sigma_1\leq \sigma_2\leq ...\leq \sigma_m\leq ... \nearrow \infty, \\
		& A_0 \Phi_m =\sigma_m \Phi_m,  \text{ in }\Omega,  \\
		& \Phi_m \;\in D(A_0)\cap C^\infty(\overline{\Omega}).
	\end{aligned}\right.
\end{equation}
\begin{Theorem}[spectral analysis]\label{th:sspecAna}
	Let $(\Ac, D(\Ac))$ be as defined in \eqref{123}. The eigenvalues of $\Ac$ are consists of two sequences $\mu_k^+$ and $\mu_k^-,$ where
	\begin{align}
		\mu_k^\pm = \frac{-(\lambda+\eta \sigma_k) \pm \sqrt{(\lambda+\eta \sigma_k)^2 - 4\sigma_k (\eta \lambda+\kappa)}}{2},
	\end{align}
	where $\sigma_k$ are eigenvalues of $A_0$ as discussed in \eqref{eqevlaplace}. Furthermore, we have the following properties: 
	\begin{itemize}
		\item[(a)] There are only finitely many complex eigenvalues of $\Ac.$ In fact, $\mu_k^\pm,$ are complex if and only if correspomding $\sigma_k$ satisfies 
		\begin{align*}
			\frac{(\eta\lambda+2 \kappa) - 2 \sqrt{\kappa^2+\kappa \eta \lambda}}{\eta^2} <\sigma_k < \frac{(\eta\lambda+2 \kappa) + 2 \sqrt{\kappa^2+\kappa \eta \delta}}{\lambda^2}.
		\end{align*}
		\item[(b)] The sequence $\mu_k^+$ converges to $-\nu_0$ as $k\rightarrow \infty$ while the other sequence $\mu_k^-$ behaves like $-\eta\sigma_k$ and goes to $-\infty$ as $k\to \infty,$ where
		\begin{align} \label{eqlambda0}
			\nu_0:= \left(\frac{\kappa}{\eta}+\lambda\right).  
		\end{align}
		\item[(c)] All the eigenvalues of $\Ac$ have negative real part. In particular, there exists $N_1,N_2\in \mathbb{N}$ such that 
		\begin{align*}
			& \text{ for all }k< N_1, \quad  -\nu_0 < \Re(\mu_k^+)<0 ,    \quad \text{ for all }k\ge N_1, \, \Re(\mu_k^+) \le -\nu_0, \\
			& \text{ for all }k< N_2, \quad  -\nu_0 < \Re(\mu_k^-)<0 ,    \quad \text{ for all }k\ge N_2, \, \Re(\mu_k^-) \le -\nu_0.
		\end{align*}
	\end{itemize}
\end{Theorem}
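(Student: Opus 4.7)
The plan is to reduce the eigenvalue problem for $\Ac$ to that of the Stokes operator $A_0$, for which we have the spectral decomposition \eqref{eqevlaplace}. Suppose $(\boldsymbol{\phi},\boldsymbol{\psi})^\top\in D(\Ac)$ is an eigenvector with eigenvalue $\mu$. The second row of $\Ac(\boldsymbol{\phi},\boldsymbol{\psi})^\top=\mu(\boldsymbol{\phi},\boldsymbol{\psi})^\top$ yields $\boldsymbol{\phi}=(\mu+\lambda)\boldsymbol{\psi}$; substituting into the first row and cancelling the common factor gives
\begin{equation*}
A_0\boldsymbol{\psi}=\frac{-\mu(\mu+\lambda)}{\eta(\mu+\lambda)+\kappa}\,\boldsymbol{\psi},
\end{equation*}
provided $\eta(\mu+\lambda)+\kappa\neq 0$ (the degenerate case $\mu=-\nu_0$ can be ruled out directly from the system). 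Hence $\boldsymbol{\psi}$ must be a Stokes eigenfunction $\Phi_k$ and $\mu$ must satisfy the scalar quadratic
\begin{equation*}
\mu^{2}+(\lambda+\eta\sigma_{k})\mu+\sigma_{k}(\eta\lambda+\kappa)=0,
\end{equation*}
whose two roots are exactly $\mu_k^\pm$. Conversely, setting $\boldsymbol{\psi}=\Phi_k$ and $\boldsymbol{\phi}=(\mu_k^\pm+\lambda)\Phi_k$ produces an eigenvector, so the spectrum is exhausted by $\{\mu_k^+,\mu_k^-\}_{k\in\N}$.

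For part (a), I would analyze the discriminant $D_k:=(\lambda+\eta\sigma_{k})^{2}-4\sigma_{k}(\eta\lambda+\kappa)$ as a quadratic in $\sigma_k$. Its reduced discriminant equals $4\kappa(\kappa+\eta\lambda)$, so the two roots are exactly the endpoints stated, and $D_k<0$ precisely for $\sigma_k$ inside this bounded interval. Since $\sigma_k\nearrow\infty$, only finitely many indices can lie there. For part (b), expand $\sqrt{D_k}$ asymptotically as $\sigma_k\to\infty$: factoring $\eta\sigma_k$ and using $\sqrt{1+x}=1+x/2+O(x^2)$ with $x=-2(\eta\lambda+2\kappa)/(\eta^{2}\sigma_{k})+O(\sigma_k^{-2})$ gives $\sqrt{D_k}=\eta\sigma_k-\lambda-2\kappa/\eta+O(\sigma_k^{-1})$, and plugging back into $\mu_k^\pm$ yields $\mu_k^+\to-(\kappa/\eta+\lambda)=-\nu_0$ and $\mu_k^-=-\eta\sigma_k+\kappa/\eta+O(\sigma_k^{-1})\to-\infty$.

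For part (c), split into cases. When $D_k<0$ the two complex conjugate roots share real part $-(\lambda+\eta\sigma_{k})/2<0$; when $D_k\geq 0$, the inequality $\sqrt{D_k}<\lambda+\eta\sigma_k$ (valid since $\sigma_k>0$) shows both real roots are strictly negative. For the separation from $-\nu_0$, I would evaluate the characteristic polynomial $p(\mu):=\mu^{2}+(\lambda+\eta\sigma_{k})\mu+\sigma_{k}(\eta\lambda+\kappa)$ at $\mu=-\nu_0$ and compute
\begin{equation*}
p(-\nu_0)=\nu_0^{2}-\nu_0(\lambda+\eta\sigma_k)+\sigma_k(\eta\lambda+\kappa)=\nu_0(\nu_0-\lambda)=\tfrac{\kappa(\kappa+\eta\lambda)}{\eta^{2}}>0,
\end{equation*}
which is independent of $\sigma_k$. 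Since $p$ opens upward with roots $\mu_k^-\leq\mu_k^+$ (for large $k$, both real), the sign $p(-\nu_0)>0$ places $-\nu_0$ outside $[\mu_k^-,\mu_k^+]$; combined with $\mu_k^-\to-\infty$ and $\mu_k^+\to-\nu_0$, this forces $\mu_k^+<-\nu_0$ for all sufficiently large $k$. Defining $N_1$ as the smallest such threshold (and $N_2$ analogously for the $\mu_k^-$ sequence) completes the claim.

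The main obstacle will be part (c), and specifically justifying the strict inequality $\Re(\mu_k^+)>-\nu_0$ for \emph{every} $k<N_1$ rather than merely infinitely many. This amounts to a monotonicity argument on $k\mapsto\mu_k^+$ through the ranges where the eigenvalues transition from complex to real; it can be handled by differentiating the explicit formula for $\mu_k^+$ with respect to $\sigma$ and verifying that $f(\sigma):=\mu^+(\sigma)$ is monotone decreasing outside the finite interval identified in (a), together with the elementary observation that the real part $-(\lambda+\eta\sigma)/2$ is strictly decreasing in $\sigma$ inside the complex regime.
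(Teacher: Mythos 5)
The paper states Theorem \ref{th:sspecAna} without any proof, so there is nothing to compare your argument against directly; your route is the natural one and its core is sound. The reduction of $\Ac(\boldsymbol{\phi},\boldsymbol{\psi})^\top=\mu(\boldsymbol{\phi},\boldsymbol{\psi})^\top$ to the scalar quadratic $\mu^2+(\lambda+\eta\sigma_k)\mu+\sigma_k(\eta\lambda+\kappa)=0$ is correct (note that only the combination $\eta\boldsymbol{\phi}+\kappa\boldsymbol{\psi}$ is required to lie in $D(A_0)$, which is exactly what your substitution $\boldsymbol{\phi}=(\mu+\lambda)\boldsymbol{\psi}$ exploits), the exclusion of the degenerate value $\mu=-\nu_0$ is right, and parts (a) and (b) are complete. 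Your computation in fact corrects two typos in the stated interval of (a): the upper endpoint should have denominator $\eta^2$, not $\lambda^2$, and $\lambda$ in place of $\delta$ under the square root.

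The one genuine issue is the single-threshold claim in (c), which you correctly flag as the remaining obstacle but propose to close with a monotonicity statement that is false in one regime. Implicit differentiation of $p_\sigma(\mu)=\mu^2+(\lambda+\eta\sigma)\mu+\sigma(\eta\lambda+\kappa)$ gives $\frac{d\mu^+}{d\sigma}=-\eta(\mu^++\nu_0)/\sqrt{D(\sigma)}$, so to the right of the complex band, where $\mu^+<-\nu_0$, the map $\sigma\mapsto\mu^+(\sigma)$ is \emph{increasing} (it climbs toward $-\nu_0$ from below), not decreasing. Fortunately your own identity $p_\sigma(-\nu_0)=\kappa(\kappa+\eta\lambda)/\eta^2>0$ already supplies the correct argument without any monotonicity of $\mu^+$ itself: on $(0,\sigma_-]$ both roots are real and tend to $0$ and $-\lambda>-\nu_0$ as $\sigma\to0^+$, hence remain $>-\nu_0$ throughout by continuity, since $-\nu_0$ never lies in $[\mu^-,\mu^+]$; on the complex band $(\sigma_-,\sigma_+)$ the common real part $-(\lambda+\eta\sigma)/2$ is strictly decreasing and crosses $-\nu_0$ exactly once, at $\sigma^*=(\eta\lambda+2\kappa)/\eta^2\in(\sigma_-,\sigma_+)$; and at $\sigma=\sigma_+$ the double root equals $-(\lambda+\eta\sigma_+)/2<-\nu_0$, so for $\sigma\geq\sigma_+$ both roots stay $<-\nu_0$ by the same continuity and exclusion argument. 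Since the $\sigma_k$ are nondecreasing, this produces the thresholds $N_1$ and $N_2$ (in fact one may take $N_1=N_2$). With this replacement your proof is complete.
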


Since, $\{\Phi_k\}_{k\in \mathbb{N}}$ is an orthonormal basis of $\Hb,$ one can easily show that $\left\lbrace \begin{pmatrix} \Phi_k \\0 \end{pmatrix}, \begin{pmatrix} 0 \\ \Phi_k \end{pmatrix}, \, k\in \mathbb{N} \right\rbrace $ is an orthonormal basis of $\Hf.$ Now, we choose eigenfunctions $\boldsymbol{\zeta}_k^\pm$ of $\Ac$ corresponding to eigenvalues $\mu_k^\pm$ as 
\begin{align} \label{eq:seigfun-A}
	\boldsymbol{\zeta}_k^\pm= \begin{pmatrix} 1 \\ \frac{1}{\mu_k^\pm+\lambda} \end{pmatrix}\Phi_k \text{ for all }k\in \mathbb{N}.
\end{align}
Observe that the set of eigenvalues of $\Ac^*$ is $\left\lbrace \overline{\mu_k^+}, \overline{\mu_k^-}, \, |\, k \in \mathbb{N} \right\rbrace$. For all $k\in \mathbb{N},$ we calculate eigenfunctions $\boldsymbol{\zeta}_k^{*+}$ and $\boldsymbol{\zeta}_k^{*-}$ of $\Ac^*$ corresponding to eigenvalues $\overline{\mu_k^+}$ and $ \overline{\mu_k^-},$ respectively as
\begin{align} \label{eq:seigfun-A*}
	\boldsymbol{\zeta}_k^{*+} =    \frac{(\lambda+ \overline{\mu_k^+})^2}{(\lambda+ \overline{\mu_k^+})^2-\kappa \sigma_k}  \begin{pmatrix} 1 \\ \frac{-\kappa \sigma_k}{\lambda+ \overline{\mu_k^+}} \end{pmatrix} \Phi_k \text{ and } \boldsymbol{\zeta}_k^{*-} =    \frac{(\lambda+ \overline{\mu_k^-})^2}{(\lambda+ \overline{\mu_k^-})^2-\kappa \sigma_k}  \begin{pmatrix} 1 \\ \frac{-\kappa \sigma_k}{\lambda+ \overline{\mu_k^-}} \end{pmatrix} \Phi_k.
\end{align}

This set of families of eigenfunctions of $\Ac$ and $\Ac^*$ satisfies the bi-orthonormality relations.

\begin{Proposition}[bi-orthonormality]\label{pps:NSEmBiorth}
	For all $k\in \mathbb{N},$ let $\{ \boldsymbol{\zeta}_k^{+} , \boldsymbol{\zeta}_k^{-}\} $ and $\{\boldsymbol{\zeta}_k^{*+}, \boldsymbol{\zeta}_k^{*-}\}$ be sets of eigenfunctions of $\Ac$ and $\Ac^*,$ respectively, as defined in \eqref{eq:seigfun-A} and \eqref{eq:seigfun-A*}, respectively. The the following bi-orthonormality relations hold:
	\begin{align*}
		& \left( \boldsymbol{\zeta}_k^+, \boldsymbol{\zeta}_n^{*+}\right)_{\Hf}  = \delta_k^n, \qquad \left( \boldsymbol{\zeta}_k^+, \boldsymbol{\zeta}_n^{*-}\right)_{\Hf} =0 \text{ for all }k, n \in \mathbb{N}, \\
		& \left( \boldsymbol{\zeta}_k^-, \boldsymbol{\zeta}_n^{*-}\right)_{\Hf}  = \delta_k^n, \qquad \left( \boldsymbol{\zeta}_k^-, \boldsymbol{\zeta}_n^{*+}\right)_{\Hf} =0 \text{ for all }k, n \in \mathbb{N},
	\end{align*}
	where 
	\begin{align*}
		\delta_k^n =\begin{cases} 1 & \text{ if } k=n, \\
			0 & \text{ if } k\neq n.\end{cases}
	\end{align*}
\end{Proposition}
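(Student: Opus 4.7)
The plan is to reduce everything to computations on the Stokes eigenbasis $\{\Phi_k\}$, and then to two pieces of algebra: a direct cancellation for the "same-sign" pairings and one Vieta-type identity for the "opposite-sign" pairings.

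First, since $\boldsymbol{\zeta}_k^\pm$ and $\boldsymbol{\zeta}_n^{*\pm}$ are constant multiples of $\Phi_k$ and $\Phi_n$ respectively in each component, the inner product in $\Hf$ factors as a scalar coefficient times $(\Phi_k,\Phi_n)_{\Hb}$. By \eqref{eqevlaplace}, $(\Phi_k,\Phi_n)_{\Hb}=\delta_k^n$, so every cross term with $k\neq n$ vanishes automatically. Thus all four claimed relations reduce to the case $k=n$, and only the scalar coefficients (which are real or complex conjugates of real quantities for most indices $k$) need to be handled.

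Next, I would compute $(\boldsymbol{\zeta}_k^\pm,\boldsymbol{\zeta}_k^{*\pm})_{\Hf}$. Unfolding the definitions \eqref{eq:seigfun-A}--\eqref{eq:seigfun-A*} and using that the conjugate of the $\overline{\mu_k^\pm}$-dependent normalizing factor just restores $\mu_k^\pm$, one finds
\[
(\boldsymbol{\zeta}_k^\pm,\boldsymbol{\zeta}_k^{*\pm})_{\Hf}
=\frac{(\lambda+\mu_k^\pm)^2}{(\lambda+\mu_k^\pm)^2-\kappa\sigma_k}\left[1+\frac{1}{\mu_k^\pm+\lambda}\cdot\frac{-\kappa\sigma_k}{\lambda+\mu_k^\pm}\right]
=\frac{(\lambda+\mu_k^\pm)^2-\kappa\sigma_k}{(\lambda+\mu_k^\pm)^2-\kappa\sigma_k}=1,
\]
which was exactly the reason for choosing the scaling factor $(\lambda+\overline{\mu_k^\pm})^2/((\lambda+\overline{\mu_k^\pm})^2-\kappa\sigma_k)$ in \eqref{eq:seigfun-A*}.

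The only substantive step is showing $(\boldsymbol{\zeta}_k^+,\boldsymbol{\zeta}_k^{*-})_{\Hf}=0$ and the symmetric statement. Expanding as above gives the factor
\[
1+\frac{1}{\mu_k^+ + \lambda}\cdot\frac{-\kappa\sigma_k}{\lambda+\mu_k^-}=\frac{(\mu_k^+ +\lambda)(\mu_k^-+\lambda)-\kappa\sigma_k}{(\mu_k^+ +\lambda)(\mu_k^-+\lambda)}.
\]
Here is where I use the characteristic polynomial of $\Ac$ restricted to the $k$-th mode, namely $\mu^2+(\lambda+\eta\sigma_k)\mu+\sigma_k(\eta\lambda+\kappa)=0$. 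By Vieta's formulas,
\[
\mu_k^+ + \mu_k^- = -(\lambda+\eta\sigma_k),\qquad \mu_k^+\mu_k^- = \sigma_k(\eta\lambda+\kappa),
\]
so that $(\mu_k^+ +\lambda)(\mu_k^-+\lambda) = \mu_k^+\mu_k^- + \lambda(\mu_k^+ +\mu_k^-)+\lambda^2 = \kappa\sigma_k$. Hence the numerator above vanishes, which gives $(\boldsymbol{\zeta}_k^+,\boldsymbol{\zeta}_k^{*-})_{\Hf}=0$, and the same identity yields $(\boldsymbol{\zeta}_k^-,\boldsymbol{\zeta}_k^{*+})_{\Hf}=0$. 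The principal (and only nontrivial) step is therefore this Vieta identity; everything else is bookkeeping with the Stokes orthonormality. A minor point to verify in passing is that $(\lambda+\mu_k^\pm)^2\neq\kappa\sigma_k$ so that the normalizing constants in \eqref{eq:seigfun-A*} are well defined, which follows from the same computation since equality would force $\mu_k^+=\mu_k^-$, a case ruled out except on a finite set that can be handled by a straightforward limiting/perturbation argument.
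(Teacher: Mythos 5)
Your proposal is correct and is precisely the computation the paper leaves implicit: the paper states the proposition without proof (deferring to the analogous calculation in \cite{WKR}), and your reduction to the Stokes orthonormality $(\Phi_k,\Phi_n)_{\Hb}=\delta_k^n$ together with the Vieta identity $(\mu_k^++\lambda)(\mu_k^-+\lambda)=\mu_k^+\mu_k^-+\lambda(\mu_k^++\mu_k^-)+\lambda^2=\kappa\sigma_k$ is exactly the intended argument, with the normalization in \eqref{eq:seigfun-A*} chosen to make the same-sign pairing equal to $1$. The only caveat is the degenerate case $\mu_k^+=\mu_k^-$ (equivalently $(\lambda+\mu_k^\pm)^2=\kappa\sigma_k$), which the paper excludes here and treats separately via generalized eigenfunctions in the remark following Theorem \ref{thmspectrum}, rather than by the limiting argument you sketch.
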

Recall the eigenfunctions of $\Ac$ and $\Ac^*$ given in \eqref{eq:seigfun-A} and \eqref{eq:seigfun-A*}, respectively. Following the same argument and a similar calculation of \cite[Proposition 3.8]{WKR}, one can uniquely represents any $\boldsymbol{\zeta}$ in $\Hf$ as 
\begin{equation} \label{eqn:full_expression}
	\boldsymbol{\zeta}=\sum_{n=1}^\infty  \left( \zeta,\zeta^{*+}_n \right)_\Hf\zeta_n^+ +\sum_{n=1}^\infty \left( \zeta,\zeta^{*-}_n \right)_\Hf\zeta_n^-.
\end{equation}
There exist positive $C_1$ and $C_2$ such that 
{\small
	\begin{equation}\label{eqnorm}
		C_1\sum_{n=1}^{\infty}\left(\left\vert  \left( \boldsymbol{\zeta},\boldsymbol{\zeta}^{*+}_n \right)_\Hf\right \vert^2+\left\vert \left(  \boldsymbol{\zeta},\boldsymbol{\zeta}^{*-}_n \right)_\Hf\right\vert^2\right)\leq \|\boldsymbol{\zeta}\|_\Hf^2\leq C_2 \sum_{n=1}^{\infty}\left(\left\vert  \left( \boldsymbol{\zeta},\boldsymbol{\zeta}^{*+}_n \right)_\Hf\right\vert^2+ \left\vert \left( \boldsymbol{\zeta},\boldsymbol{\zeta}^{*-}_n \right)_\Hf\right \vert^2\right).
\end{equation}  }
Also, $\boldsymbol{\zeta}  \in \Hf$ can be uniquely represented by using eigenfunctions of $\Ac^*$, $\{\boldsymbol{\zeta}^{*+}_k, \boldsymbol{\zeta}^{*-}_k\mid k\in \boldsymbol{N}\}$ as 
\begin{equation} \label{eqn:full_expressionadj}
	\boldsymbol{\zeta}=\sum_{n=1}^\infty  \left( \boldsymbol{\zeta},\boldsymbol{\zeta}^+_n \right)_\Hf \boldsymbol{\zeta}_n^{*+} +\sum_{n=1}^\infty \left( \boldsymbol{\zeta},\boldsymbol{\zeta}^-_n \right)_\Hf\boldsymbol{\zeta}_n^{*-},
\end{equation}
and for some positive constants $C_1^*$ and $C_2^*,$ we have
\begin{equation}\label{eqnormadj}
	C_1^*\sum_{n=1}^{\infty}\left(\left\vert \left( \boldsymbol{\zeta},\boldsymbol{\zeta}^+_n \right)_\Hf\right \vert^2+ \left\vert \left( \boldsymbol{\zeta},\boldsymbol{\zeta}^-_n \right)_\Hf\right\vert^2\right)\leq \|\boldsymbol{\zeta}\|_\Hf^2\leq C_2^* \sum_{n=1}^{\infty}\left(\left\vert \left( \boldsymbol{\zeta},\boldsymbol{\zeta}^+_n \right)_\Hf \right\vert^2+ \left\vert \left( \boldsymbol{\zeta},\boldsymbol{\zeta}^-_n \right)_\Hf \right \vert^2\right).
\end{equation}  
As a consequence of the above proposition, from \cite[Remark 2.6.4, Chapter 2, Section 2.6]{Tucs}, we get the following result of spectrum of $\Ac$ and $\Ac^*$. 
\begin{Theorem}\label{thmspectrum}
	The family of eigenfunctions of $\Ac$, $\{\zeta^+_k, \zeta^-_k\mid k\in \mathbb{N}\}$, forms a Riesz basis in $\Hf$.  The spectrum of $A,$ denoted by $\sigma(\Ac),$ is 
	$$ \sigma(\Ac)= \mbox{the closure of}\quad \{\mu^+_k, \mu^-_k\mid k\in \mathbb{N}\} \quad \mathrm{in}\quad \mathbb{C} .$$
	In particular, $\sigma(\Ac)=\{\mu^+_k, \mu^-_k\mid k\in \mathbb{N}\}\cup\{-\nu_0\}$, where $\nu_0$ is defined in \eqref{eqlambda0}. \\
	Similarly, $\sigma(\Ac^*)$, the spectrum of $\Ac^*$, is  $\{\overline{\mu}^+_k, \overline{\mu}^-_k\}_{k\in \mathbb{N}}\cup \{-\nu_0\}$ in $\C$. 
\end{Theorem}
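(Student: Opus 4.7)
The plan is to read off both parts of the theorem from the eigenfunction bookkeeping that has already been laid out and to invoke \cite[Remark 2.6.4]{Tucs}. For the Riesz basis claim, the three ingredients required are (i) completeness of $\{\boldsymbol{\zeta}_k^+,\boldsymbol{\zeta}_k^-\}_{k\in\mathbb{N}}$ in $\Hf$, (ii) the existence of a biorthogonal family, and (iii) an isomorphism between the coefficient sequences and $\ell^2$. These are exactly the content of the expansion \eqref{eqn:full_expression}, Proposition \ref{pps:NSEmBiorth}, and the norm equivalence \eqref{eqnorm}, respectively, so the first conclusion is immediate.

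For the spectrum, I would establish both inclusions. The forward inclusion is quick: each $\mu_k^{\pm}$ is an eigenvalue by construction, hence $\{\mu_k^{\pm}\}\subseteq \sigma(\Ac)$, and since $\sigma(\Ac)$ is closed, Theorem \ref{th:sspecAna}(b) forces $-\nu_0\in\sigma(\Ac)$ (the unique finite accumulation point, since $\mu_k^-\to-\infty$). For the reverse inclusion, pick $\mu$ in the complement of $\{\mu_k^{\pm}\}\cup\{-\nu_0\}$, so that the scalar sequences $\{1/(\mu-\mu_k^{\pm})\}_{k}$ are bounded. Given $Y\in \Hf$ with expansion coefficients supplied by \eqref{eqn:full_expression}, the candidate resolvent is
$$ X \;=\; \sum_{k=1}^\infty \frac{(Y,\boldsymbol{\zeta}_k^{*+})_{\Hf}}{\mu-\mu_k^+}\,\boldsymbol{\zeta}_k^{+} \;+\; \sum_{k=1}^\infty \frac{(Y,\boldsymbol{\zeta}_k^{*-})_{\Hf}}{\mu-\mu_k^-}\,\boldsymbol{\zeta}_k^{-}, $$
which converges in $\Hf$ by \eqref{eqnorm}. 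A term-by-term computation gives $(\mu I-\Ac)X=Y$, and the Riesz basis bounds then yield a bounded inverse, so $\mu\in\rho(\Ac)$.

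The delicate point, and what I expect to be the main obstacle, is verifying that $X\in D(\Ac)$, i.e.\ that $\eta\zb+\kappa\wb\in D(A_0)$ for the series defining $X$. This requires tracking, via the explicit eigenfunction form \eqref{eq:seigfun-A} together with the asymptotics $\mu_k^+\to-\nu_0$ and $\mu_k^-\sim -\eta\sigma_k$ from Theorem \ref{th:sspecAna}(b), the decay of the $\Phi_k$-coefficients of the combination $\eta\zb+\kappa\wb$, and then using $A_0\Phi_k=\sigma_k\Phi_k$ to check the Parseval-type condition $\sum_k\sigma_k^2|\text{coefficient}_k|^2<\infty$. The corresponding statement for $\sigma(\Ac^*)$ then follows by the mirror argument with the biorthogonal family $\{\boldsymbol{\zeta}_k^{*\pm}\}$, the dual expansion \eqref{eqn:full_expressionadj}, and the norm equivalence \eqref{eqnormadj}, noting that $\overline{-\nu_0}=-\nu_0$ so the accumulation point is unchanged.
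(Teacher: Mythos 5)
Your proposal is correct and follows essentially the same route as the paper, which simply deduces the theorem from the bi-orthonormality relations of Proposition \ref{pps:NSEmBiorth} together with the Riesz-basis/diagonalizable-operator result in \cite[Remark 2.6.4]{Tucs}; you have merely unpacked what that citation delegates (the diagonal resolvent formula and the domain check via $\sigma_k\bigl(\eta+\tfrac{\kappa}{\mu_k^\pm+\lambda}\bigr)=-\mu_k^\pm$), and your identification of the $D(\Ac)$-membership of the candidate resolvent as the only nontrivial verification is accurate.
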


\begin{Remark}
	Note that the operator $\Ac$ on $\Hf$ can exhibit multiple eigenvalues, which may arise either due to the multiplicity of the eigenvalues of the Stokes operator $A_0$ on $\Hb$, or when, for some $k \in \mathbb{N}$, $\mu_k^+ = \mu_k^-$. In the first case, the eigenfunctions $\left\lbrace \boldsymbol{\zeta}_k^\pm \mid k \in \mathbb{N} \right\rbrace$ form a basis for $\Hf$, similar to how $\left\lbrace\Phi_k \mid k \in \mathbb{N}\right\rbrace$ forms a basis for $\Hb$, and the analysis proceeds naturally. In the latter case, where multiple eigenvalues occur, we compute the generalized eigenfunctions and proceed accordingly. For further details and procedures, we refer to Remarks 3.5 and 4.6 in \cite{WKR}, where an analogous formulation leads to the desired result.
\end{Remark}

\subsection{Stabilizability} This section is devoted not just to study the stabilizability of \eqref{eqn:main_system} but also to discuss the stabilizability with the expected decay rate. From the analysis of spectrum discussed in the previous section, one can expect that the decay rate of the system is bounded by $\nu_0,$ that is, we expect the decay rate as $\nu$ for any $\nu \in (0,\nu_0).$ To get the stabilizability with decay $\nu\in (0, \nu_0),$ it is convenient to discuss the stabilizability of the shifted system
\begin{equation} \label{eqn: main shiftedVel}
	\widetilde{Y}'(t)=\Ac_\nu \widetilde{Y}(t)+\Bc \widetilde{\boldsymbol{v}}(t), \, \text{ for all } \, t>0, \quad  \widetilde{Y}(0)=\left( \begin{matrix}
		\zb_0\\ 0
	\end{matrix} \right):= {Y}_0,
\end{equation}
where $\widetilde{Y}(t)=e^{\nu t}Y(t)$ and $\widetilde{\boldsymbol{v}}(t)=e^{\nu t}\boldsymbol{v}(t),$ and
\begin{align}\label{eqdef:Ac_nu}
	\Ac_\nu=\Ac+\nu \boldsymbol{I} \text{ with } D(\Ac_\nu)=D(\Ac),
\end{align}
where $\boldsymbol{I}:\Hf \to \Hf$ being identity operator. Observe that $Y$ and $\boldsymbol{v}$ can be recovered by
$ Y(t)= e^{-\nu t}\widetilde{Y}(t), \quad \boldsymbol{v}(t)= e^{-\nu t} \widetilde{\boldsymbol{v}}(t), \text{ for all } t>0,$ and satisfy \eqref{eqn:main_system}. Moreover, if \eqref{eqn: main shiftedVel} is exponentially stabilizable by control $\widetilde{\boldsymbol{v}},$ that is, 
for some positive constants $C$ and $\gamma$, 
$$ \|\widetilde{Y}(t)\|_\Hf \le C e^{-\gamma t} \|Y_0\|_\Hf,  \quad \|\widetilde{\boldsymbol{v}}(t)\|_{\Lb}\le C e^{-\gamma t} \|Y_0\|_\Hf, \text{ for all } t>0, $$
then $Y(\cdot)$ and $\boldsymbol{v}(\cdot)$ also obey
$$ \|Y(t)\|_\Hf \le C e^{-(\nu + \gamma )t} \|Y_0\|_\Hf, \quad \|\boldsymbol{v}(t)\|_{\Lb}\le C e^{-(\nu +\gamma) t} \|Y_0\|_\Hf, \text{ for all } t>0.$$
Thus, for any given $\nu\in (0,\nu_0)$, the feedback exponential stabilizability of \eqref{eqn: main shiftedVel} in $\Hf$  yields the exponential stabilizability of \eqref{eqn:main_system} in $\Hf$ with decay $-\nu<0$. Hence, we focus on the study of stabilizability of \eqref{eqn: main shiftedVel}.

Theorem \ref{thmspectrum} yields the spectrum of $\Ac_\nu$ to be $$\sigma(\Ac_\nu)=\{\mu^+_k+\nu, \mu^-_k+\nu\mid k\in \mathbb{N}\}\cup\{\nu-\nu_0 \},$$ where $\{\mu^+_k, \mu^-_k\mid k\in \mathbb{N}\}$ is the set of eigenvalues of $\Ac$. Note that from Theorem \ref{th:sspecAna},  for any $\nu\in (0,\nu_0)$, there exists only finitely many eigenvalues of $\Ac$ with real part greater than $-\nu$. In view of Theorem $\ref{th:sspecAna}$, without loss of generality, we can choose 
$\nu\in (0,\nu_0)$ such that $-\nu$ does not coincide with any eigenvalues of $\Ac$ and 
there exists $N_\nu\in \mathbb{N}$ satisfying
\begin{equation}\label{eqevorder}
	\begin{array}{l}
		\text{ for all } 1\le k \le N_\nu, \quad  -\nu<\Re(\mu^\pm_k), \quad 
		\text{ and for all }\, k>N_\nu, \quad \Re(\mu^\pm_k)<-\nu. 
	\end{array}
\end{equation}
Therefore there are finite number of eigenvalues of $\Ac_\nu$ which has positive real part, in particular,
$$
\begin{array}{l}
	\Re\,(\mu^\pm_k+\nu )>0, \quad \text{ for all }\, 1\le k\le N_\nu, \quad \text{ and } \,
	\Re\,(\mu^\pm_k+\nu )<0, \, \text{ for all }\, k>N_\nu.
\end{array}
$$
We denote the set of elements in the spectrum of $\Ac_\nu$ with positive real part by 
\begin{equation}\label{eqev+} 
	\sigma_+(\Ac_\nu)=\{\mu^+_k+\nu, \quad \mu^-_k+\nu\mid 1\le k\le N_\nu\},
\end{equation}
and the set of elements in the spectrum of $\Ac_\nu$ with negative real part by $\sigma_-(\Ac_\nu)$. In this case,  we have
\begin{equation}\label{eqev-}
	\sigma_-(\Ac_\nu)= \sigma(\Ac_\nu)\backslash \sigma_+(\Ac_\nu).
\end{equation}
The finite set $\sigma_+(\Ac_\nu)$  in $\mathbb{C}$ can be enclosed by a simple closed Jordan curve $\Gamma_u$. Then we set the projection $\pi_u\in \mathcal{L}(\Hf)$ associated  to $\sigma_+(\Ac_\nu)$ by 
\begin{align*}
	\pi_u :=\frac{1}{2\pi i}\int_{\Gamma_u} R(\mu,\Ac_\nu) \, d\mu,
\end{align*}
and $\pi_s:=(I- \pi_u)\in \mathcal{L}(\Hf)$ associated to $\sigma_-(\Ac_\nu)$.

For any $\nu\in (0,\nu_0)$, in the next proposition, the stabilizability of $(\Ac_\nu, \Bc)$ is proved by showing that Hautus condition holds. 
\begin{Proposition} \label{prop:stab by Hautus}
	For all $\nu\in (0,\nu_0)$, where $\nu_0$ is defined in \eqref{eqlambda0}, the system \eqref{eqn: main shiftedVel} is stabilizable in $\Hf$. 
\end{Proposition}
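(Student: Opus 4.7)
The plan is to verify the Hautus criterion for $(\Ac_\nu,\Bc)$: for every eigenvalue $\mu\in\sigma_+(\Ac_\nu)$ and every $\boldsymbol{\phi}\in D(\Ac_\nu^*)$ satisfying $\Ac_\nu^*\boldsymbol{\phi}=\bar\mu\,\boldsymbol{\phi}$ with $\boldsymbol{\phi}\ne 0$, one must show $\Bc^*\boldsymbol{\phi}\ne 0$. Since $\sigma_+(\Ac_\nu)$ is finite by \eqref{eqev+}, the associated invariant subspace $\pi_u\Hf$ is finite-dimensional, so once the Hautus condition is established one obtains a finite-dimensional feedback that stabilizes the unstable projection, and the stable part $\pi_s\Hf$ is automatically exponentially decaying since $\Ac_\nu$ generates an analytic semigroup (Theorem \ref{pps:existence}) with spectrum of $\pi_s\Ac_\nu$ contained in $\{\Re\mu<0\}$.

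First, I would translate the unstable eigenvalues and eigenfunctions of $\Ac_\nu$ back to those of $\Ac$: an eigenvalue $\mu\in\sigma_+(\Ac_\nu)$ corresponds to some $\mu_k^\pm$ with $1\le k\le N_\nu$, and the eigenfunctions of $\Ac^*$ for $\overline{\mu_k^\pm}$ are the vectors $\boldsymbol{\zeta}_k^{*\pm}$ from \eqref{eq:seigfun-A*}, whose first component is a nonzero scalar multiple of $\Phi_k$. Since $\Bc^*$ picks off the first component restricted to $\mathcal{O}$ (see \eqref{eqadjcontrol}), applied to any linear combination $\boldsymbol{\phi}=\sum_j \alpha_j\boldsymbol{\zeta}_{k_j}^{*\epsilon_j}$ within a fixed $\bar\mu$-eigenspace one obtains
\begin{equation*}
\Bc^*\boldsymbol{\phi}=\Big(\sum_j \alpha_j\,\tau_{k_j}^{\epsilon_j}\,\Phi_{k_j}\Big)\chi_{\mathcal{O}},
\end{equation*}
with explicit nonzero scalars $\tau_{k_j}^{\epsilon_j}$ from \eqref{eq:seigfun-A*}. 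The indices $k_j$ appearing here all share the same Stokes eigenvalue $\sigma_{k_j}=\sigma$, because the map $\sigma_k\mapsto\mu_k^\pm$ is injective on each branch and joins branches only when $\mu_k^+=\mu_k^-$ (which forces a single $\sigma_k$). Hence $\sum_j\alpha_j\tau_{k_j}^{\epsilon_j}\Phi_{k_j}$ is itself a Stokes eigenfunction for the eigenvalue $\sigma$, and vanishing of $\Bc^*\boldsymbol{\phi}$ is equivalent to this Stokes eigenfunction vanishing on the open set $\mathcal{O}$.

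At this point the argument reduces to the \emph{unique continuation property} for the Stokes operator $A_0$: any eigenfunction (or linear combination of eigenfunctions for the same eigenvalue) which vanishes on a nonempty open subset of $\Omega$ must vanish identically. This is the classical Fabre--Lebeau theorem for the stationary Stokes system, and it forces all $\alpha_j=0$ by the linear independence of the $\Phi_{k_j}$, contradicting $\boldsymbol{\phi}\ne 0$. Thus $\Bc^*\boldsymbol{\phi}\ne 0$ for every unstable eigenfunction of $\Ac_\nu^*$, and Hautus' criterion yields stabilizability of \eqref{eqn: main shiftedVel} in $\Hf$. The main obstacle I anticipate is the degenerate case $\mu_k^+=\mu_k^-$ (or Stokes multiplicities), where one must verify Hautus on the full generalized eigenspace; this is handled by the generalized eigenfunction construction alluded to in the paper's Remark after Theorem \ref{thmspectrum}, whose Jordan-chain components still have first coordinate proportional to $\Phi_k$, so the unique continuation argument applies verbatim.
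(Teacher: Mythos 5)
Your proposal is correct and follows essentially the same route as the paper: verify the Hautus condition $\mathrm{Ker}(\mu I-\Ac_\nu^*)\cap\mathrm{Ker}(\Bc^*)=\{0\}$ using the explicit adjoint eigenfunctions \eqref{eq:seigfun-A*}, whose first component is a nonzero multiple of $\Phi_k$, and conclude via unique continuation (the paper argues through interior analyticity of Stokes eigenfunctions, you cite Fabre--Lebeau; same substance). Your extra care with linear combinations inside a multiple eigenspace and with the case $\mu_k^+=\mu_k^-$ is a refinement the paper relegates to a separate remark, not a different method.
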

\begin{proof}
	We prove this result using \cite[Proposition 3.3, Chapter 1, Part-V]{BDDM}. Then the  following holds:
	\begin{itemize}
		\item[(a)] Since $(\Ac,D(\Ac))$ generates an analytic semigroup on $\Hf$ and $(\Ac_\nu,D(\Ac_\nu))$ is a bounded perturbation of $(\Ac,D(\Ac))$. Hence, by \cite[Theorem 12.37]{RROG}, $(\Ac_\nu,D(\Ac_\nu))$ generates an analytics semigroup on $\Hf.$
		\item[(b)] From \eqref{eqev+}, it follows that the set of eigenvalues of $\Ac_\nu$ with non-negative real part is finite.
		\item[(c)] Also, from \eqref{eqev-} and using a similar argument as \cite[Lemma 3.11]{WKRPCE}, there exist $\epsilon>0$ and constant $C>0,$ such that
		\begin{align*}
			\sup_{\mu \in \sigma_-(\Ac_\nu)} \Re(\mu)<-\epsilon \text{ and } \|e^{t\Ac_\nu}\pi_s\|_{\mathcal{L}(\Hf)}\le C e^{-\epsilon t} \text{ for all }t>0.
		\end{align*}
	\end{itemize}
	It remains to show that Hautus conditions satisfied by $\Ac_\nu^*$, indeed  
	\begin{align*}
		\text{Ker}(\mu I- \Ac_\nu^*)\cap \text{Ker} (\Bc^*) =\{0\}
	\end{align*}
	holds.
	Let $\boldsymbol{\zeta} \in \text{Ker}(\mu I- \Ac_\nu^*)\cap \text{Ker} (\Bc^*)$ be arbitrary. Now, $\boldsymbol{\zeta} \in \text{Ker}(\mu I- \Ac_\nu^*)$ implies $\boldsymbol{\zeta} $ is an eigenfunction of $\Ac_\nu^*$ and hence an eigenfunction of $\Ac^*.$ Therefore, $\boldsymbol{\zeta} =C_k\boldsymbol{\zeta}_k^{*+}$ or $\boldsymbol{\zeta} =C_k\boldsymbol{\zeta}_k^{*-}$, for some $k\in\{1,2, \dots, N_\nu\}.$ Furthermore, $\boldsymbol{\zeta} \in \text{Ker} (\Bc^*)$ implies $C_k\Phi_k\chi_\mathcal{O}=0,$ for all $k\in \{1,2, \dots, N_\nu\}$ that is, $C_k\Phi_k \mid_{\mathcal{O}}=0$ for all $k\in \{1,2, \dots, N_\nu\}$. But, since  $ \Phi_k$ is an eigenfunction of $A_0$ corresponding to the eigenvalue $\sigma_k$ in $\mathbb{H}$, $\Phi_k$ is analytic function in $\mathcal{O}$, an open connected domain of $\Omega$ for every $k.$ Therefore it cannot vanish identically in $\mathcal{O},$ and hence $C_k=0,$ for all $k\in \{1,2, \dots, N_\nu\}$ that is, $\boldsymbol{\zeta} =0.$ Therefore, 
	\begin{align*}
		\text{Ker}(\mu I- \Ac_\nu^*)\cap \text{Ker} (\Bc^*) =\{0\},
	\end{align*}
	and hence by \cite[Proposition 3.3, Chapter 1, Part-V]{BDDM}, the pair $(\Ac_\nu, \Bc)$ is open loop stabilizable in $\Hf.$
\end{proof}

Often, the feedback operator $K$ is obtained by studying an optimization problem and by using a Riccati equation. 
Consider the optimal control problem:
\begin{align}\label{eqoptinf_vel}
	\min_{\wt{\boldsymbol{v}}\in E_{Y_0}} J( \wt{Y},\wt{\boldsymbol{v}}) \text{ subject to \eqref{eqn: main shiftedVel}}, \text{ where } J( \wt{Y},\wt{\boldsymbol{v}}):=\int_0^\infty\big( \|\wt{Y}(t)\|_{\Hf}^2 + \|\wt{\boldsymbol{v}}(t)\|_\Hb^2\big) \, dt,
\end{align}
$E_{Y_0}:=\{ \wt{\boldsymbol{v}}\in L^2(0,\infty; \Hb)\mid \wt{Y} \text{ solution of \eqref{eqn: main shiftedVel} with control }\wt{\boldsymbol{v}} \text{ such that } J( \wt{Y},\wt{\boldsymbol{v}})<\infty\}.$
The next theorem yields the minimizer of \eqref{eqoptinf_vel} as well as the stabilizing control in the feedback form.  

\begin{Theorem}[{\normalfont \cite[Theorem 2.2 and Remark 4.5]{WKR}}]\label{th:stb cnt_vel}
	Let $\nu \in (0, \nu_0)$ be  any real number. Let $\Ac_\nu$ (resp. $\Bc$) be as defined in \eqref{eqdef:Ac_nu} (resp. \eqref{eqcontrol}). Then the following results hold: 
	\begin{enumerate}
		\item[(a)] There exists a unique operator $\Pc\in \mathcal{L}(\Hf)$ that satisfies the non-degenerate Riccati equation 
		\begin{equation}\label{eqn:ARE_vel}
			\begin{array}{l}
				\normalfont\Ac_\nu^*\Pc+\Pc\Ac_\nu-\Pc\Bc\Bc^*\Pc+\boldsymbol{I}=0,\quad \Pc=\Pc^* \geq 0 \text{ on }\Hf.
			\end{array}
		\end{equation}
		\item[(b)] For any $Y_0\in \Hf$, there exists a unique optimal pair $\normalfont(Y^\sharp,\boldsymbol{v}^\sharp)$ for \eqref{eqoptinf_vel}, where for all $t>0$, $Y^\sharp(t)$ satisfies the closed loop system 
		\begin{equation}\label{eqcl-loop_vel}
			Y{^\sharp}'(t)=(\Ac_\nu-\Bc\Bc^*\Pc)Y^\sharp(t),\;\; Y^\sharp(0)=Y_0,
		\end{equation}
		$\boldsymbol{v}^\sharp(t)$ can be expressed in the feedback form as
		\begin{equation}\label{eqoptcntrl_vel}
			\boldsymbol{v}^\sharp(t)=-\Bc^*\Pc Y^\sharp(t), 
		\end{equation}
		and
		$\displaystyle\min_{\wt{\boldsymbol{v}}\in E_{{Y}_0}}\normalfont J( \wt{Y},\wt{\boldsymbol{v}})=J(Y^\sharp,\boldsymbol{v}^\sharp)=( \Pc Y_0,Y_0).$
		\item[(c)] The feedback control in \eqref{eqoptcntrl_vel} stabilizes \eqref{eqn: main shiftedVel}. In particular, let us denote the operator $\normalfont\Ac_{\nu,\Pc}:=\Ac_\nu-\Bc\Bc^*\Pc,$ with $\normalfont D(\Ac_{\nu,\Pc})= D(\Ac)$. The semigroup $ \{e^{t\Ac_{\nu,\Pc}}\}_{t\ge 0}$, generated by $(\normalfont\Ac_{\nu,\Pc}, \normalfont D(\Ac_{\nu,\Pc})),$ on $\Hf$ is analytic and exponentially stable, that is, there exist $\gamma>0$ and $M>0$ such that 
		$$\n \|e^{t \Ac_{\nu,\Pc}}\|_{\mathcal{L}(\Hf)}\leq Me^{-\gamma t} \text{ for all }  t>0.$$
	\end{enumerate}
\end{Theorem}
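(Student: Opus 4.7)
The plan is to apply the classical linear-quadratic regulator (LQR) theory for analytic semigroups with bounded control operator, in the framework of \cite{BDDM}, and essentially reproduce the argument of \cite[Theorem 2.2 and Remark 4.5]{WKR} adapted to the present operator $\Ac_\nu$. The four structural inputs we already have in hand are: (i) $\Ac_\nu$ generates an analytic semigroup on $\Hf$ (since $\Ac$ does by Theorem \ref{pps:existence}(a) and $\nu \boldsymbol{I}$ is bounded); (ii) the control operator $\Bc\in\mathcal{L}(\Hb,\Hf)$ is bounded; (iii) the pair $(\Ac_\nu,\Bc)$ is open-loop stabilizable in $\Hf$ by Proposition \ref{prop:stab by Hautus}; and (iv) the observation operator in the cost is the identity $\boldsymbol{I}$, so the pair $(\Ac_\nu,\boldsymbol{I})$ is trivially detectable.

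For part (a), I would first use the stabilizability in (iii) to produce an admissible feedback $\wt{\boldsymbol{v}}(t) = F \wt{Y}(t)$ with $F\in \mathcal{L}(\Hf,\Hb)$ such that $\Ac_\nu + \Bc F$ generates an exponentially stable semigroup, guaranteeing that $E_{Y_0}\neq \emptyset$ and that $J(\wt Y,\wt{\boldsymbol v})$ has a finite infimum for every $Y_0\in\Hf$. Strict convexity of $J$ in $(\wt Y,\wt{\boldsymbol v})$ together with weak lower semicontinuity on the closed affine constraint set gives existence and uniqueness of an optimal pair. Defining $(\Pc Y_0,Y_0)_{\Hf}:=\min J$, a dynamic programming argument shows that $\Pc\in\mathcal{L}(\Hf)$ is self-adjoint and nonnegative. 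To derive the Riccati equation \eqref{eqn:ARE_vel}, I would differentiate the value function along optimal trajectories and use the Pontryagin maximum principle on finite horizons, then pass to the limit in the horizon, exactly as in \cite[Chapter 1, Part V]{BDDM}; uniqueness of $\Pc$ among the nonnegative self-adjoint solutions follows from the detectability in (iv) via a standard Lyapunov comparison.

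For part (b), the optimal feedback form \eqref{eqoptcntrl_vel} is read off from the first-order optimality condition: the adjoint state equals $\Pc Y^\sharp$, so minimizing the Hamiltonian in $\wt{\boldsymbol v}$ gives $\boldsymbol{v}^\sharp = -\Bc^* \Pc Y^\sharp$. Plugging this into \eqref{eqn: main shiftedVel} yields the closed-loop system \eqref{eqcl-loop_vel}, and the identity $\min J = (\Pc Y_0, Y_0)$ comes from integrating $\tfrac{d}{dt}(\Pc Y^\sharp, Y^\sharp)$ against \eqref{eqn:ARE_vel}. For part (c), analyticity of $\{e^{t\Ac_{\nu,\Pc}}\}_{t\ge 0}$ follows because $\Bc\Bc^*\Pc\in \mathcal{L}(\Hf)$ is a bounded perturbation of the analytic generator $\Ac_\nu$. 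Exponential stability is obtained from the Lyapunov-type identity
\begin{equation*}
\frac{d}{dt}(\Pc Y^\sharp(t), Y^\sharp(t))_\Hf = -\|Y^\sharp(t)\|_\Hf^2 - \|\Bc^*\Pc Y^\sharp(t)\|_\Hb^2,
\end{equation*}
which combined with $0\le \Pc \le C\boldsymbol I$ yields $\|Y^\sharp(t)\|_\Hf^2 \le C e^{-\gamma t}\|Y_0\|_\Hf^2$ for some $\gamma>0$, and then the operator estimate $\|e^{t\Ac_{\nu,\Pc}}\|_{\mathcal{L}(\Hf)} \le M e^{-\gamma t}$ follows.

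The main subtlety, and the reason the result is not merely a black-box citation, is the verification that the LQR framework of \cite{BDDM} genuinely applies to our non-self-adjoint, matrix-valued generator $\Ac_\nu$ whose spectrum has the accumulation point $-\nu_0+\nu$ identified in Theorem \ref{thmspectrum}. In particular, when establishing the uniqueness and non-degeneracy of $\Pc$ one needs to exploit the spectral decomposition via the Riesz basis $\{\boldsymbol\zeta^\pm_k\}$ and the projection $\pi_u$ onto the finite unstable subspace, to decouple the problem into a finite-dimensional Riccati equation on $\operatorname{Range}(\pi_u)$ and an exponentially stable part on $\operatorname{Range}(\pi_s)$; this is precisely where the Hautus-type argument of Proposition \ref{prop:stab by Hautus} feeds into the Riccati theory, and where the computation in \cite[Section 4]{WKR} must be transferred verbatim to the operator $\Ac_\nu$ of the present paper.
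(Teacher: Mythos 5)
Your proposal is correct and follows essentially the same route as the paper, which presents this theorem as an imported result from \cite[Theorem 2.2 and Remark 4.5]{WKR} once the prerequisites you list — analyticity of the semigroup generated by $\Ac_\nu$, boundedness of $\Bc$, and open-loop stabilizability of $(\Ac_\nu,\Bc)$ from Proposition \ref{prop:stab by Hautus} — are in place. The only step you state a little too quickly is the passage from the Lyapunov identity to exponential decay of $\|Y^\sharp(t)\|_{\Hf}$: the upper bound $0\le\Pc\le C\boldsymbol{I}$ alone does not give pointwise decay, and one should instead integrate the identity to obtain $\int_0^\infty\|Y^\sharp(t)\|_{\Hf}^2\,dt\le(\Pc Y_0,Y_0)_{\Hf}$ and then invoke Datko's theorem (or the smoothing of the analytic closed-loop semigroup) to conclude.
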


\begin{Remark} \label{rem:F-Inf-Feed-Velo}
	From the above theorem, we define the feedback operator as
	\begin{align} \label{eqfeedbackcntrl}
		K=-\Bc^*\Pc.
	\end{align}
	It encompasses both the finite-dimensional and infinite-dimensional cases, as considered in \cite{WKR}. For our subsequent analysis, the distinction between these two cases is not significant, as the analysis remains valid in both scenarios. Therefore, we simply denote $K$ as the feedback operator without specifying its dimensionality.
\end{Remark}


\section{Full System} \label{sec:FS-NSE}
In this section, we focus on the stabilizability of the non-linear coupled full system by using the feedback gain operator obtained in the case of principal system considered in the previous section. Before proving the stabilizability, let us prove a regularity result required for the corresponding non-homogeneous linear closed loop system:
\begin{equation} \label{eqlinregu}
	\left\{
	\begin{aligned}
		& \widetilde{\zb}_t + \eta A_0\widetilde{\zb} + \kappa A_0\widetilde{\wb}-\nu\widetilde{\zb} = \chi_{\mathcal{O}}K(\widetilde{\zb}, \widetilde{\wb})+ \boldsymbol{f} \text{ in } \Omega\times (0,\infty), \\
		& \widetilde{\wb}_t+\lambda\widetilde{\wb} - \widetilde{\zb} -\nu\widetilde{\wb}= 0 \text{ in } \Omega\times (0,\infty), \\
		& \widetilde{\zb}=0, \quad \& \quad \widetilde{\wb}=0  \text{ on } \Gamma\times (0,\infty), \\
		& \widetilde{\zb}(0)=\zb_0,  \quad \& \quad \widetilde{\wb}(0)=0\text{ in }\Omega.
	\end{aligned}\right.
\end{equation}
where $\wt{\zb}=e^{\nu t}\zb,$ $\wt{\wb}=e^{\nu t}\wb,$ and $K \in \mathcal{L}(\Hf,\Hb)$ is the stabilizing feedback operator defined in \eqref{eqfeedbackcntrl}.

\begin{Theorem}\label{th:est-1}
	Let $\nu\in (0, \nu_0)$ and $K$ be as defined in \eqref{eqfeedbackcntrl}. For any $\zb_0 \in \Vf $ and for any given $\boldsymbol{f}\in L^2(0,\infty;\Hb)$,  the closed loop system \eqref{eqlinregu}
	has a unique solution 
	$(\widetilde{\zb}, \widetilde{\wb})$ such that $\widetilde{\zb}\in L^2(0, \infty; \Hb^2(\Omega))\cap H^1(0, \infty; \Hb)\cap L^\infty(0, \infty;\Vf) $ and $\widetilde{\wb}\in H^1(0, \infty; \Hzz\cap\Hb)$
	satisfying the following estimate
	\begin{align}
		\|\widetilde{\zb}\|_{L^2(0, \infty; \Hb^2(\Omega))}+\|\widetilde{\zb}\|_{H^1(0, \infty; \Hb)}&+\|\widetilde{\zb}\|_{L^\infty(0, \infty;\Vf)}+\|\widetilde{\wb}\|_{H^1(0, \infty; \Hb^2(\Omega))} \\\nonumber &\leq M_1\left(\|\zb_0\|_{\Vf}+\|\boldsymbol{f}\|_{L^2(0, \infty;\Hb)}\right),
	\end{align}
	for some positive constant $M_1$ independent of initial condition $\zb_0$ and forcing term $\fb$.
\end{Theorem}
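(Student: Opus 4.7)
Setting $\widetilde{Y}=(\widetilde{\zb},\widetilde{\wb})^\top$, $Y_0=(\zb_0,0)^\top$, and $F=(\fb,0)^\top$, the closed-loop system \eqref{eqlinregu} recasts as the abstract evolution equation
\begin{equation*}
\widetilde{Y}'(t)=\Ac_{\nu,\Pc}\widetilde{Y}(t)+F(t),\qquad \widetilde{Y}(0)=Y_0,
\end{equation*}
with $\Ac_{\nu,\Pc}=\Ac_\nu-\Bc\Bc^*\Pc$ the closed-loop operator from Theorem \ref{th:stb cnt_vel}(c), which already generates an analytic and exponentially stable semigroup on $\Hf$. My plan is to invoke maximal $L^2$-regularity for analytic semigroups on Hilbert spaces (de Simon's theorem) on the whole half-line $(0,\infty)$, which is admissible thanks to the exponential decay, and then to disentangle the two components by exploiting the algebraic structure of the coupling.

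The first technical point is to identify the trace space $[\Hf,D(\Ac_{\nu,\Pc})]_{1/2}=[\Hf,D(\Ac)]_{1/2}$ in which the initial datum must sit. I would employ the linear isomorphism $T\colon\Hf\to\Hf$, $T(\zb,\wb)=(\eta\zb+\kappa\wb,\wb)$, which sends $D(\Ac)$ isomorphically onto $D(A_0)\times\Hb$; by functoriality of real interpolation,
\begin{equation*}
[\Hf,D(\Ac)]_{1/2}\cong [\Hb,D(A_0)]_{1/2}\times[\Hb,\Hb]_{1/2}=\Vf\times\Hb.
\end{equation*}
For $Y_0=(\zb_0,0)$ we have $TY_0=(\eta\zb_0,0)$, so membership in the trace space reduces exactly to $\zb_0\in\Vf$, with $\|Y_0\|_{[\Hf,D(\Ac)]_{1/2}}\simeq\|\zb_0\|_{\Vf}$. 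Maximal regularity then yields
\begin{equation*}
\widetilde{Y}\in L^2(0,\infty;D(\Ac))\cap H^1(0,\infty;\Hf)
\end{equation*}
with a quantitative bound by $\|\zb_0\|_{\Vf}+\|\fb\|_{L^2(0,\infty;\Hb)}$.

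Because $D(\Ac)$ has coupled form, this only delivers the combination $g:=\eta A_0\widetilde{\zb}+\kappa A_0\widetilde{\wb}\in L^2(0,\infty;\Hb)$. To separate the two components I would apply $A_0$ to the second equation of \eqref{eqlinregu} and eliminate $A_0\widetilde{\zb}=(g-\kappa A_0\widetilde{\wb})/\eta$, producing the scalar-coefficient ODE in $\Hb$
\begin{equation*}
A_0\widetilde{\wb}_t+(\nu_0-\nu)A_0\widetilde{\wb}=\tfrac{1}{\eta}\,g,\qquad A_0\widetilde{\wb}(0)=0,
\end{equation*}
with damping $\nu_0-\nu=\kappa/\eta+\lambda-\nu>0$ thanks to the standing restriction $\nu\in(0,\nu_0)$. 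Duhamel together with Young's convolution inequality then yields $A_0\widetilde{\wb}\in L^2(0,\infty;\Hb)$, hence $\widetilde{\wb}\in L^2(0,\infty;\Hzz\cap\Vf)$, and consequently $A_0\widetilde{\zb}=(g-\kappa A_0\widetilde{\wb})/\eta\in L^2(0,\infty;\Hb)$, so $\widetilde{\zb}\in L^2(0,\infty;\Hzz\cap\Vf)$. The remaining pieces follow immediately: $\widetilde{\zb}_t\in L^2(0,\infty;\Hb)$ from the $H^1(0,\infty;\Hf)$ component of maximal regularity; $\widetilde{\zb}\in L^\infty(0,\infty;\Vf)$ via the Lions--Magenes trace embedding $L^2(0,\infty;\Hzz)\cap H^1(0,\infty;\Hb)\hookrightarrow C_b([0,\infty);\Vf)$; and the ODE gives $A_0\widetilde{\wb}_t=g/\eta-(\nu_0-\nu)A_0\widetilde{\wb}\in L^2(0,\infty;\Hb)$, producing $\widetilde{\wb}\in H^1(0,\infty;\Hzz\cap\Hb)$. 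Collecting these bounds gives the announced estimate with constant $M_1$ independent of $\zb_0$ and $\fb$.

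The principal obstacle is precisely this disentangling step: the coupled form of $D(\Ac)$ prevents maximal regularity from controlling $A_0\widetilde{\zb}$ and $A_0\widetilde{\wb}$ individually, so one has to exploit the second (ODE-type) equation to extract each piece. The saving grace is that, after substitution, the second equation becomes a dissipative equation for $A_0\widetilde{\wb}$ whose damping coefficient collapses to exactly $\nu_0-\nu$; the strict inequality $\nu<\nu_0$ is what makes this coefficient positive and allows the argument to close.
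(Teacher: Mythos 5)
Your proposal is correct, and it reaches the same structural conclusion as the paper — namely that only the combination $\eta\wt{\zb}+\kappa\wt{\wb}$ carries full parabolic regularity, and that $\wt{\wb}$ is then recovered from a damped scalar ODE whose damping coefficient is exactly $\nu_0-\nu>0$ — but it gets the parabolic regularity by a different mechanism. The paper first uses only the $L^2(0,\infty;\Hf)$ well-posedness of the closed-loop system, then writes down the Stokes-type PDE satisfied by $\boldsymbol{p}:=\wt{\zb}+\tfrac{\kappa}{\eta}\wt{\wb}$, treating the feedback and the lower-order terms $\big(\nu+\tfrac{\kappa}{\eta}\big)\wt{\zb}+\tfrac{\kappa}{\eta}(\nu-\lambda)\wt{\wb}$ as an $L^2(0,\infty;\Hb)$ forcing, and invokes the classical maximal regularity of the Stokes semigroup with data $\boldsymbol{p}(0)=\zb_0\in\Vf$; it then uses $\wt{\wb}(t)=\int_0^t e^{-(\nu_0-\nu)(t-s)}\boldsymbol{p}(s)\,ds$ and $\wt{\zb}=\boldsymbol{p}-\tfrac{\kappa}{\eta}\wt{\wb}$, exactly as in your second half. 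You instead apply abstract maximal $L^2$-regularity to the full closed-loop generator $\Ac_{\nu,\Pc}$ on the half-line, which requires identifying the trace space $[\Hf,D(\Ac)]_{1/2}\cong\Vf\times\Hb$ via the isomorphism $T(\zb,\wb)=(\eta\zb+\kappa\wb,\wb)$ — a step the paper avoids entirely, at the price of the extra decoupling computation for $\boldsymbol{p}$. Your route is slightly more economical (the feedback term is absorbed into the generator rather than estimated as forcing, so no preliminary $L^2$ bound is needed), while the paper's is more elementary in that it only uses concrete Stokes regularity. One small point of rigor in your write-up: ``applying $A_0$ to the second equation'' presupposes $\wt{\wb}(t)\in D(A_0)$ pointwise, which is not known a priori; this should be phrased as applying the closed operator $A_0$ to the Duhamel representation $\wt{\wb}(t)=\int_0^t e^{-(\nu_0-\nu)(t-s)}\tfrac{1}{\eta}\big(\eta\wt{\zb}(s)+\kappa\wt{\wb}(s)\big)\,ds$, where the integrand is known to lie in $D(A_0)$ for a.e.\ $s$ — which is in effect what your Duhamel--Young step does, and is also how the paper proceeds.
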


\begin{proof}
	Note that \eqref{eqlinregu} can be written as 
	\begin{align*}
		\wt{Y}'(t)=(\Ac_\nu + \Bc K)\wt{Y}(t) + \begin{pmatrix} \fb(t) \\ 0 \end{pmatrix} \text{ for all }t>0, \quad \wt{Y}(0)=Y_0. 
	\end{align*}
	Since $\Ac_\nu + \Bc K$ generates an analytic semigroup of negative type on $\Hf$ (see Theorem \ref{th:stb cnt_vel}(c)) and $\begin{pmatrix} \fb(t) \\ 0 \end{pmatrix} \in L^2(0,\infty ; \Hf),$ \cite[Proposition 3.1, Ch. 1, Part II]{BDDM} yields a unique solution $ \wt{Y} \in L^2(0,\infty;\Hf)$ with 
	\begin{align*}
		\|\wt{Y}\|_{L^2(0,\infty; \Hf)} \le C(\|Y_0\|_{\Hf} + \|\fb\|_{L^2(0,\infty; \Hb)}),
	\end{align*}
	for some $C>0$. With $\boldsymbol{p}:=\wt{\zb}+\frac{\kappa}{\eta}\wt{\wb},$ from \eqref{eqlinregu}, we obtain 
	\begin{align}
		& \boldsymbol{p}_t +\eta A \boldsymbol{p} = \Big(\nu+\frac{\kappa}{\eta}\Big)\wt{\zb} + \frac{\kappa}{\eta} (\nu -\lambda)\wt{\wb} +\chi_{\mathcal{O}}K(\wt{\zb}, \wt{\wb}) +\fb \text{ in } \Omega \times (0,\infty),\\
		& \boldsymbol{p}(x,t)=0 \text{ on } \Gamma\times (0,\infty), \\
		& \boldsymbol{p}(x,0)= \zb_0(x) \text{ in }\Omega.
	\end{align}
	Since the right hand size belongs to $L^2(0,\infty; \Lb)$ and $\zb_0 \in \Vf,$ from \cite[Theorem 1.1 and Proposition 1.2, Chapter III]{TemamNSE01}, we find a unique strong solution $\boldsymbol{p} \in L^2(0,\infty; \Hzz \cap \Vf)\cap L^\infty(0,\infty; \Vf) \cap H^1(0,\infty; \Hb)$ such that 
	\begin{align*}
		\|\boldsymbol{p}  \|_{L^2(0,\infty; \Hzz )} & + \|\boldsymbol{p}\|_{L^\infty(0,\infty; \Vf)}+ \|\boldsymbol{p}\|_{H^1(0,\infty; \Hb)} \\
		& \le C\left( \|\zb_0\|_{\Vf}+ \big\|\left(\nu+\frac{\kappa}{\eta}\right)\wt{\zb} + \frac{\kappa}{\eta} (\nu -\lambda)\wt{\wb} +\chi_{\mathcal{O}}K(\wt{\zb}, \wt{\wb}) +\fb \big\|_{L^2(0,\infty; \Hb)}   \right) \\
		& \le  C ( \|\zb_0\|_{\Vf}  + \|\fb\|_{L^2(0,\infty; \Hb)}).
	\end{align*}
	Observe that
	\begin{align*}
		\wt{\wb}_t=-(\lambda + \frac{\kappa}{\eta}-\nu)\wt{\wb} +\boldsymbol{p} \text{ for all }t>0, \text{ and }\wt{\wb}(0)=0,
	\end{align*}
	and recall that $0<\nu<\nu_0=\lambda+\frac{\kappa}{\eta}.$ Therefore, one can write 
	\begin{align*}
		\wt{\wb}(x,t)=\int_0^t e^{-(\lambda + \frac{\kappa}{\eta}-\nu)(t-s)}\boldsymbol{p}(x,s)ds.
	\end{align*}
	Now, estimating this using the regularity of $\boldsymbol{p}$ and the fact that $-(\lambda + \frac{\kappa}{\eta}-\nu)<0,$ we can easily show that $\wt{\wb} \in H^1(0,\infty; \Hzz\cap\Hb)$ and 
	\begin{align*}
		\|\wt{\wb}\|_{H^1(0,\infty;\Hzz)} \le C(\|\zb_0\|_{\Vf} + \|\fb\|_{L^2(0,\infty; \Hb)}),
	\end{align*}
	for some $C>0.$ One can use the regularity of $\wt{\wb}$ and $\boldsymbol{p}$ with the relation $\wt{\zb}=\boldsymbol{p}-\frac{\kappa}{\eta}\wt{\wb}$ to show that $\wt{\zb} $ belongs to the required spaces and satisfies the necessary estimates. We conclude the proof here.
\end{proof}

Now, for any $0< \nu< \lambda<\nu_0,$ 
we are in situation to study the feedback stabilization of the non-linear system. Consider the full shifted system in feedback form as
\begin{equation} \label{eqn:vnon-linear main}
	\left\lbrace
	\begin{aligned}
		& \widetilde{\zb}_t(\cdot, t) +\eta  A_0  \widetilde{\zb}(\cdot, t)+\kappa A_0 \widetilde{\wb}(\cdot,t)-\nu  \widetilde{\zb}(\cdot, t) =\chi_{\mathcal{O}}(\cdot) K( \widetilde{\zb}(\cdot, t), \widetilde{\wb}(\cdot, t)) - e^{-\nu t}B(\widetilde{\zb}, \widetilde{\zb})\\
		& \hspace{3.5cm}  - B(\yb_\infty, \wt{\zb}) - B(\wt{\zb}, \yb_\infty) - \frac{\kappa}{\lambda} e^{-(\lambda-\nu) t} A_0 \yb_\infty \text{ in }\Omega\times(0, \infty),\\
		& \widetilde{\wb}_t(\cdot, t) +\lambda\widetilde{\wb}(\cdot, t)- \widetilde{\zb}(\cdot,t)-\nu \widetilde{\wb}(\cdot,t)=0 \text{ in }\Omega\times(0, \infty),\\
		& \wt{\zb}=0, \quad \& \quad \wt{\wb}=0 \text{ on } \Gamma\times (0,\infty), \\
		&  \widetilde{\zb}(x,0)= \zb_0(x), \; \widetilde{\wb}(0,x)= 0 \quad \text{ in }\Omega
	\end{aligned}\right.
\end{equation}

We prove that the system \eqref{eqn:vnon-linear main} has unique stable solution by using the Banach point fixed theorem. To do that, we first set the space:
\begin{equation}\label{veqspace1}
	\begin{array}{l}
		\mathbb{D}=\Big\lbrace \widetilde{\zb}\in L^2(0,\infty;  \Hb^2(\Omega))\cap C([0,\infty); \Vf)\cap H^1(0,\infty;\Hb), \\[2.mm]
		\mbox{with norm }  
		\|\widetilde{\zb}\|^2_\mathbb{D}= \|\widetilde{\zb}\|^2_{L^2(0,\infty; \Hb^2(\Omega))}+\|\widetilde{\zb}\|^2_{L^\infty(0,\infty; \Vf)}
		+\|\widetilde{\zb}\|^2_{H^1(0,\infty;\Hb)} \Big \rbrace.
	\end{array}
\end{equation}
For any $\rho>0$, we define
\begin{equation}\label{veqspace2}
	\begin{array}{l}
		\mathbb{D}_\rho=\{\widetilde{\zb}\in \mathbb{D}\mid \|\widetilde{\zb}\|_\mathbb{D} \le \rho\}, \, \text{ and }\,
		\mathbb{B}_\rho= \{\widetilde{\wb}\in H^1(0,\infty;  \Hzz\cap\Hb)\mid \|\widetilde{\wb}\|_{H^1(0,\infty; \Hzz)}\le \rho\}.
	\end{array}
\end{equation}
Next, for any $\Psi \in \mathbb{D},$ we define the function
\begin{equation}\label{veqNL1}
	\boldsymbol{f}(\Psi)(x,t)=-e^{-\nu t} B(\Psi,\Psi)  - B(\yb_\infty, \Psi) - B(\Psi, \yb_\infty)- \frac{\kappa}{\lambda} e^{-(\lambda-\nu) t} A_0 \yb_\infty.
\end{equation}

\begin{Lemma} \label{vlem:est}
	Let $\boldsymbol{f}$ be as introduced in \eqref{veqNL1} and $\nu \in (0,\lambda)$ be any number. 
	There exists a positive constant $M_2>0$, such that for all $\Psi, \Psi_1, \Psi_2\in \mathbb{D}$, 
	{\small
		\begin{equation}\label{veqNL2}
			\begin{array}{l}
				(a) \,\|\boldsymbol{f}(\Psi)\|_{L^2(0,\infty; \Hb)}\leq M_2(\|\Psi\|^2_\mathbb{D} +  \|\yb_\infty\|_{\Hzz} +\|\yb_\infty\|_{\Hzz}^2),\\
				(b) \, \|\boldsymbol{f}(\Psi_1)-\boldsymbol{f}(\Psi_2)\|_{L^2(0,\infty; \Hb)}\leq M_2 \Big(\|\Psi_1\|_\mathbb{D}+\|\Psi_2\|_\mathbb{D} + \|\yb_\infty\|_{\Hzz}\Big)\|\Psi_1-\Psi_2\|_\mathbb{D}.
			\end{array}
	\end{equation}}
	Here, the constant $M_2$ depends on $\Omega$  and the Sobolev embedding constant. 
\end{Lemma}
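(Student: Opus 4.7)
The plan is to estimate each of the four terms appearing in $\boldsymbol{f}(\Psi)$ separately in the $L^2(0,\infty;\Hb)$ norm. The central analytic ingredient is the pointwise bilinear bound
\[
\|B(\ub,\vb)\|_{\Hb}\le \|(\ub\cdot\nabla)\vb\|_{\Lb}\le C\|\ub\|_{\Hzz}\|\vb\|_{\Vf},
\]
which is valid in both dimensions $d\in\{2,3\}$ thanks to the Sobolev embedding $\Hzz\hookrightarrow \mathbb{L}^\infty(\Omega)$ (available for a bounded $C^2$ domain whenever $d\le 3$), together with the symmetric variant obtained by exchanging the roles of $\ub$ and $\vb$ and using $\mathbb{H}^2\hookrightarrow\mathbb{W}^{1,\infty}$-type control where needed.

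For part (a), applied to the quadratic piece $e^{-\nu t}B(\Psi,\Psi)$, the above estimate and H\"older in time give
\[
\|e^{-\nu t}B(\Psi,\Psi)\|_{L^2(0,\infty;\Hb)}\le C\|\Psi\|_{L^2(0,\infty;\Hzz)}\|\Psi\|_{L^\infty(0,\infty;\Vf)}\le C\|\Psi\|_\mathbb{D}^2,
\]
discarding the harmless factor $e^{-\nu t}\le 1$. For the Oseen-type terms $B(\yi,\Psi)$ and $B(\Psi,\yi)$, the same bilinear estimate combined with the continuous embedding $\Hzz\hookrightarrow\Vf$ yields
\[
\|B(\yi,\Psi)\|_{L^2(0,\infty;\Hb)}+\|B(\Psi,\yi)\|_{L^2(0,\infty;\Hb)}\le C\|\yi\|_{\Hzz}\|\Psi\|_{L^2(0,\infty;\Hzz)}\le C\|\yi\|_{\Hzz}\|\Psi\|_\mathbb{D}.
\]
The pure forcing term $\frac{\kappa}{\lambda}e^{-(\lambda-\nu)t}A_0\yi$ is controlled by $\|A_0\yi\|_\Hb\le C\|\yi\|_{\Hzz}$ together with $\int_0^\infty e^{-2(\lambda-\nu)t}\,dt<\infty$, where the standing hypothesis $\nu<\lambda$ is crucially used. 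A final Young-type splitting $\|\yi\|_{\Hzz}\|\Psi\|_\mathbb{D}\le \tfrac12\|\yi\|_{\Hzz}^2+\tfrac12\|\Psi\|_\mathbb{D}^2$ produces exactly the three contributions $\|\Psi\|_\mathbb{D}^2$, $\|\yi\|_{\Hzz}$, and $\|\yi\|_{\Hzz}^2$ that appear in (a).

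For part (b), the deterministic forcing cancels in the difference, and the bilinearity of $B$ gives
\[
B(\Psi_1,\Psi_1)-B(\Psi_2,\Psi_2)=B(\Psi_1-\Psi_2,\Psi_1)+B(\Psi_2,\Psi_1-\Psi_2).
\]
Each summand is estimated by the same bilinear bound, producing one factor of $\|\Psi_1-\Psi_2\|_\mathbb{D}$ and one factor of $\|\Psi_j\|_\mathbb{D}$; while the linear differences $B(\yi,\Psi_1-\Psi_2)$ and $B(\Psi_1-\Psi_2,\yi)$ contribute $C\|\yi\|_{\Hzz}\|\Psi_1-\Psi_2\|_\mathbb{D}$ exactly as in (a). Collecting these terms gives the Lipschitz estimate of (b). The only delicate point—and the only plausible ``obstacle''—is matching the Sobolev embeddings to the three pieces of regularity packaged in $\|\cdot\|_\mathbb{D}$ so that the $L^2$ time integrability is used exactly once per term, with the $L^\infty(0,\infty;\Vf)$ component absorbing the remaining factor; once this bookkeeping is set up, no genuine difficulty remains.
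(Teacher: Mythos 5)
Your argument is correct and follows essentially the same route as the paper: term-by-term estimation of the four pieces of $\boldsymbol{f}$, the splitting $B(\Psi_1,\Psi_1)-B(\Psi_2,\Psi_2)=B(\Psi_1-\Psi_2,\Psi_1)+B(\Psi_2,\Psi_1-\Psi_2)$ for part (b), Sobolev embedding plus H\"older in time pairing $L^2(0,\infty;\Hzz)$ against $L^\infty(0,\infty;\Vf)$, the hypothesis $\nu<\lambda$ for the forcing term, and a Young-type inequality to reach the stated form (the paper uses the $\mathbb{L}^4(\Omega)$--$\mathbb{L}^4(\Omega)$ H\"older pairing where you use $\Hzz\hookrightarrow\mathbb{L}^\infty(\Omega)$, which is immaterial for $d\le 3$). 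The only blemish is your parenthetical appeal to an ``$\mathbb{H}^2(\Omega)\hookrightarrow\mathbb{W}^{1,\infty}(\Omega)$-type'' embedding, which fails for $d=2,3$ but is never actually needed, since $\|B(\Psi,\yi)\|_{\Hb}\le C\|\Psi\|_{\Hzz}\|\yi\|_{\Vf}$ already follows from your main bilinear bound.
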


\begin{proof}
	(a) For $\Psi\in \mathbb{D} $ and for all $t>0$, it gives that
	\begin{align}
		\| e^{-\nu t} B(\Psi,\Psi) \|^2_{L^2(0,\infty; \Hb)} & =  \int_0^\infty \|e^{-\nu t} \Pb [(\Psi\cdot\nabla)\Psi]\|_{\Hb}^2 dt  \\
		& \leq  \int_0^\infty e^{-2\nu t}\|\Psi(t)\|^2_{\mathbb{L}^4(\Omega)}\|\nabla\Psi(t)\|^2_{\mathbb{L}^4(\Omega)} dt\nonumber\\ 
		& \leq  C \|\Psi\|_{L^\infty(0,\infty; \Vf)}^2 \|\Psi\|_{L^2(0,\infty; \Hzz)}^2,
	\end{align}
	where we used H\"{o}lder's and  Sobolev's inequalities.  
	Since $\yb_\infty \in \Hzz \cap \Vf,$ we estimate $B(\yb_\infty, \Psi),$  $B(\Psi, \yb_\infty)$ and $ e^{-(\lambda-\nu) t} A_0 \yb_\infty$ as
	{\small
		\begin{align*}
			& \| B(\yb_\infty,\Psi) \|^2_{L^2(0,\infty; \Hb)}  =  \int_0^\infty \| \Pb [(\yb_\infty\cdot\nabla)\Psi]\|_{\Hb}^2 dt\le C \|\yb_\infty\|_{\mathbb{L^\infty}(\Omega)}^2  \|\Psi\|_{L^2(0,\infty; \Vf)}^2,\\
			& \| B(\Psi,\yb_\infty) \|^2_{L^2(0,\infty; \Hb)}  =  \int_0^\infty \| \Pb [(\Psi\cdot\nabla) \yb_\infty]\|_{\Lb}^2 dt\le C\|\yb_\infty\|_{\Hzz}^2 \|\Psi\|_{L^2(0,\infty; \Vf)}^2, \\
			&  \|  e^{-(\lambda-\nu) t} A_0 \yb_\infty\|^2_{L^2(0,\infty; \Hb)}  \le \frac{1}{\lambda-\nu}\|\yb_\infty\|_{\Hzz}^2 \text{ as } 0<\nu<\lambda.
	\end{align*}}
	Now, combing the above estimates, we complete the proof of (a).

	\noindent (b) Addition and subtraction of $e^{-\nu t}B(\Psi_2,\Psi_1)$ followed by a triangle inequality and Sobolev inequality lead to
	\begin{equation}
		\begin{aligned}
			&\|e^{-\nu t}  B(\Psi_1,\Psi_1)  - e^{-\nu t} B(\Psi_2,\Psi_2) \|_{L^2(0,\infty; \Hb)}^2 \\
			&=\|e^{-\nu t}  B(\Psi_1-\Psi_2,\Psi_1)  + e^{-\nu t} B(\Psi_2,\Psi_1-\Psi_2) \|_{L^2(0,\infty; \Hb)}^2 \\
			& \leq  \int_0^\infty \Big(\|\Psi_1(t)-\Psi_2(t)\|^2_{\mathbb{L}^4(\Omega)}\|\nabla\Psi_1(t)\|_{\mathbb{L}^4(\Omega)}^2  +\|\Psi_2(t)\|_{\mathbb{L}^4(\Omega)}^2\|\nabla(\Psi_1-\Psi_2)(t)\|_{\mathbb{L}^4(\Omega)}^2 \Big) dt \nonumber\\
			& \le C \Big( \|\Psi_1-\Psi_2\|_{L^\infty(0,\infty; \Vf)}^2\|\Psi_1\|_{L^2(0,\infty; \Hzz)}^2  + \|\Psi_2\|_{L^\infty(0,\infty; \Vf)}^2\|\Psi_1-\Psi_2\|_{L^2(0,\infty; \Hzz)}^2 \Big).
		\end{aligned}
	\end{equation}
	In a similar way one can have the following estimates:
	\begin{align*}
		\| B(\yb_\infty,\Psi_1) - B(\yb_\infty,\Psi_2) \|^2_{L^2(0,\infty; \Hb)} & =  \int_0^\infty \| \Pb [(\yb_\infty\cdot\nabla)(\Psi_1-\Psi_2)\|_{\Hb}^2 dt \\
		& \le  \|\yb_\infty\|_{\mathbb{L^\infty}(\Omega)}^2 \|\Psi_1-\Psi_2\|_{L^2(0,\infty; \Vf)}^2,  
	\end{align*}
	
	\begin{align*}
		\| B(\Psi_1,\yb_\infty) - B(\Psi_2,\yb_\infty)\|^2_{L^2(0,\infty; \Lb)} 
		\le 	C\|\yb_\infty\|_{\Hzz}^2 \|\Psi_1-\Psi_2\|_{L^2(0,\infty; \Vf)}^2.
	\end{align*}
	A combination of the above estimates with a use of Sobolev embedding leads to the required estimate and the proof is complete. 
\end{proof}

Finally, we state the stabilization result for the non-linear system \eqref{eqn:non-linear intromain}. The closed loop system with the control in feedback form can be written as
\begin{equation} \label{eqn:vnon-linear closed loop main}
	\left\lbrace
	\begin{aligned}
		& \zb_t(\cdot, t) +\eta  A_0  \zb(\cdot, t)+\kappa A_0 \wb(\cdot,t) +B(\zb, \zb) + B(\yb_\infty, \zb) + B(\zb, \yb_\infty) \\
		& \hspace{3cm}- \frac{\kappa}{\lambda} e^{-\lambda t} A_0 \yb_\infty =\chi_{\mathcal{O}}(\cdot) K( \zb(\cdot, t), \wb(\cdot, t))   \text{ in }\Omega\times(0, \infty),\\
		& \wb_t(\cdot, t) +\lambda\wb(\cdot, t)- \zb(\cdot,t)=0 \text{ in }\Omega\times(0, \infty),\\
		& \zb=0, \quad \& \quad \wb=0 \text{ on } \Gamma\times (0,\infty), \\
		&  \zb(x,0)=\zb_0(x), \; \wb(0,x)=0 \quad \text{ in }\Omega,
	\end{aligned}\right.
\end{equation}
where $K(\cdot,\cdot)$ is the linear feedback operator obtained in Remark \ref{rem:F-Inf-Feed-Velo}. 

\begin{Theorem}\label{th:vstab non lin}
	Let $\nu\in(0,\lambda)$ be arbitrary. There exist a continuous linear  operator  
	$K\in \mathcal{L} \left(\Hf,\Hb\right),$ and positive constants 
	$\rho_0$ and $M$ depending on $\eta$, $\lambda$, $\kappa$, such that, for all $0<\rho\le \rho_0$ and for all $\zb_0\in \Vf$  satisfying
	$$\|\zb_0\|_{\Vf}\leq M\rho, \text{ and } \|\yb_\infty\|_{\Hzz} \le M \rho,$$ 
	the non-linear closed loop system \eqref{eqn:vnon-linear closed loop main} admits a unique solution $(\zb,\wb)$ such that \begin{equation}\label{eqn-regu}\zb\in L^2(0, \infty; \Hzz)\cap H^1(0, \infty; \Hb)\cap L^\infty(0, \infty;\Vf),\ \wb \in H^1(0, \infty; \Hzz\cap\Hb),\end{equation}  and
	{\small
		$$ \|e^{\nu\cdot}\zb\|^2_{L^2(0,\infty;\Hb^2(\Omega))}+\|e^{\nu\cdot}\zb\|^2_{L^\infty(0,\infty; \Vf)}+\|e^{\nu\cdot}\zb\|^2_{H^1(0,\infty; \Hb)}+\|e^{\nu\cdot}\wb\|^2_{H^1(0,\infty; \Hb^2(\Omega))}\le 2\rho^2.$$ }
	Moreover, $(\zb, \wb)$ satisfies 
	\begin{align}\label{est_exp} 
		\|(\zb(\cdot,t), \wb(\cdot,t))\|_{\Vf\times \Hb^2(\Omega)} \le C e^{-\nu t} \left( \|\zb_0\|_{\Vf} + \|\yb_\infty\|_{\Hzz} + \|\yb_\infty\|_{\Hzz}^2 \right),
	\end{align}
	for all $t>0$ and for some positive constant $C$ independent of the initial conditions and time $t$. 
\end{Theorem}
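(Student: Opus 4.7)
The plan is to apply the Banach fixed point theorem to a self-map on the ball $\mathbb{D}_\rho$ defined in \eqref{veqspace2}. For each $\Psi\in \mathbb{D}_\rho$, I freeze the genuine nonlinearity and the steady-state data on the right-hand side, producing the known forcing $\boldsymbol{f}(\Psi)$ from \eqref{veqNL1}, and define $\mathcal{F}(\Psi):=\widetilde{\zb}$ where $(\widetilde{\zb},\widetilde{\wb})$ is the unique solution of the linear closed-loop system \eqref{eqlinregu} with this forcing and initial datum $\zb_0$. Theorem \ref{th:est-1} guarantees that $\mathcal{F}(\Psi)\in \mathbb{D}$ and the paired $\widetilde{\wb}\in H^1(0,\infty;\Hzz\cap\Hb)$, with
\begin{align*}
\|\mathcal{F}(\Psi)\|_{\mathbb{D}} + \|\widetilde{\wb}\|_{H^1(0,\infty;\Hzz)} \le M_1\bigl(\|\zb_0\|_{\Vf} + \|\boldsymbol{f}(\Psi)\|_{L^2(0,\infty;\Hb)}\bigr).
\end{align*}

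Combining this with Lemma \ref{vlem:est}(a) gives
\begin{align*}
\|\mathcal{F}(\Psi)\|_{\mathbb{D}} \le M_1\|\zb_0\|_{\Vf} + M_1 M_2\bigl(\|\Psi\|_{\mathbb{D}}^2 + \|\yb_\infty\|_{\Hzz} + \|\yb_\infty\|_{\Hzz}^2\bigr).
\end{align*}
Choosing $\rho_0$ and $M$ small so that $\|\zb_0\|_{\Vf}\le M\rho$, $\|\yb_\infty\|_{\Hzz}\le M\rho$, and $\rho\le \rho_0$, the right-hand side can be forced below $\rho$, hence $\mathcal{F}(\mathbb{D}_\rho)\subset \mathbb{D}_\rho$. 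For the contraction property, the difference $\mathcal{F}(\Psi_1)-\mathcal{F}(\Psi_2)$ solves \eqref{eqlinregu} with zero initial data and forcing $\boldsymbol{f}(\Psi_1)-\boldsymbol{f}(\Psi_2)$; Theorem \ref{th:est-1} together with Lemma \ref{vlem:est}(b) then yields
\begin{align*}
\|\mathcal{F}(\Psi_1)-\mathcal{F}(\Psi_2)\|_{\mathbb{D}} \le M_1 M_2\bigl(\|\Psi_1\|_{\mathbb{D}}+\|\Psi_2\|_{\mathbb{D}}+\|\yb_\infty\|_{\Hzz}\bigr)\|\Psi_1-\Psi_2\|_{\mathbb{D}},
\end{align*}
and the coefficient is bounded by $M_1 M_2(2\rho+M\rho)<1$ for $\rho$ sufficiently small. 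The Banach fixed point theorem produces a unique $\widetilde{\zb}^{\sharp}\in\mathbb{D}_\rho$ with companion $\widetilde{\wb}^{\sharp}\in\mathbb{B}_\rho$ solving the shifted system \eqref{eqn:vnon-linear main}. Setting $\zb:=e^{-\nu t}\widetilde{\zb}^{\sharp}$ and $\wb:=e^{-\nu t}\widetilde{\wb}^{\sharp}$ then delivers the unique solution of \eqref{eqn:vnon-linear closed loop main} with the claimed regularity \eqref{eqn-regu} and the norm bound by $\sqrt{2}\rho$.

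It remains to extract the pointwise decay \eqref{est_exp}. This follows from the continuous embeddings $\mathbb{D}\hookrightarrow C_b([0,\infty);\Vf)$ and $H^1(0,\infty;\Hzz)\hookrightarrow C_b([0,\infty);\Hzz)$, applied to $\widetilde{\zb}^{\sharp}$ and $\widetilde{\wb}^{\sharp}$ respectively, and then unshifted by the factor $e^{-\nu t}$; the constant $C$ arises from the linear estimate of Theorem \ref{th:est-1} applied once more at the fixed point, with $\boldsymbol{f}(\widetilde{\zb}^{\sharp})$ controlled via Lemma \ref{vlem:est}(a) in terms of $\|\yb_\infty\|_{\Hzz}+\|\yb_\infty\|_{\Hzz}^{2}$ (plus the quadratic $\|\widetilde{\zb}^{\sharp}\|_{\mathbb{D}}^2$ absorbed into the linear budget by smallness).

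The main obstacle is bookkeeping the smallness parameters so that self-mapping and contraction hold simultaneously. In particular, the linear-in-$\yb_\infty$ contribution $\tfrac{\kappa}{\lambda}e^{-(\lambda-\nu)t}A_0\yb_\infty$ in $\boldsymbol{f}(\Psi)$ does not vanish at $\Psi=0$, so the smallness of $\|\yb_\infty\|_{\Hzz}$ must be tied directly to $\rho$ via the constant $M$; this is the structural reason the theorem demands $\|\yb_\infty\|_{\Hzz}\le M\rho$ in addition to $\|\zb_0\|_{\Vf}\le M\rho$. A secondary point is that the restriction $\nu<\lambda$ (rather than $\nu<\nu_0$) is precisely what makes $e^{-(\lambda-\nu)t}$ belong to $L^2(0,\infty)$ in the estimate of Lemma \ref{vlem:est}(a), so the fixed-point argument cannot be pushed to the full admissible decay range of the principal system.
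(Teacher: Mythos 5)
Your proposal is correct and follows essentially the same route as the paper: freeze the nonlinearity to define a map on $\mathbb{D}_\rho$ via the linear closed-loop regularity result of Theorem \ref{th:est-1}, use Lemma \ref{vlem:est}(a) for the self-map property and Lemma \ref{vlem:est}(b) for the contraction, apply the Banach fixed point theorem to the shifted system \eqref{eqn:vnon-linear main}, and then unshift by $e^{-\nu t}$, recovering \eqref{est_exp} by re-applying the linear estimate at the fixed point. Your closing remarks on why $\|\yb_\infty\|_{\Hzz}\le M\rho$ is structurally necessary and why the decay range is capped at $\lambda$ match the paper's choices and its Remark \ref{rem:decayRate-NSE}.
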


\begin{proof}
	Let us first discuss  the well-posedness of the closed loop shifted system \eqref{eqn:vnon-linear main} and then the required result will follow immediately.  To do that, we use the Banach fixed point theorem. For a given $\Psi \in \mathbb{D}_\rho,$ consider the following system: 
	\begin{equation} \label{eqn:vnon-linear main+f}
		\left\lbrace
		\begin{aligned}
			& \widetilde{\zb}_{\Psi_t}(\cdot, t) +\eta  A_0  \widetilde{\zb}_\Psi(\cdot, t)+\kappa A_0 \widetilde{\wb}_\Psi(\cdot,t)-\nu  \widetilde{\zb}_\Psi(\cdot, t) \\
			& \qquad \qquad =\chi_{\mathcal{O}}(\cdot) K( \widetilde{\zb}_\Psi(\cdot, t), \widetilde{\wb}_\Psi(\cdot, t)) + \fb(\Psi)\text{ in }\Omega\times(0, \infty),\\
			& \widetilde{\wb}_{\Psi_t}(\cdot, t) +\lambda\widetilde{\wb}_\Psi(\cdot, t)- \widetilde{\zb}_\Psi(\cdot,t)-\nu \widetilde{\wb}_\Psi(\cdot,t)=0 \text{ in }\Omega\times(0, \infty),\\
			& \wt{\zb}_\Psi=0, \quad \& \quad \wt{\wb}_\Psi=0 \text{ on } \Gamma\times (0,\infty), \\
			&  \widetilde{\zb}_\Psi(x,0)=\zb_0(x), \quad \& \quad \widetilde{\wb}_\Psi(x,0)=0 \, \text{ in }\Omega,
		\end{aligned}\right.
	\end{equation}
	where $\fb$ is as in \eqref{veqNL1}. Since, $\Psi\in \mathbb{D},$ Lemma \ref{vlem:est}(a) implies $\fb(\Psi)\in L^2(0,\infty;\Lb)$ and hence  from Theorem \ref{th:est-1}, we have the existence of a solution $(\wt{\zb}_\Psi, \wt{\wb}_\Psi)$ of \eqref{eqn:vnon-linear main+f} with the estimate
	{\small
		\begin{equation} \label{eq:estttDB}
			\begin{aligned}
				\|\wt{\zb}_\Psi\|_\mathbb{D} + \|\wt{\wb}_\Psi\|_{H^1(0,\infty; \Hzz)} & \le M_1(\|\zb_0\|_{\Vf} + \|\fb(\Psi)\|_{L^2(0,\infty;\Hb)})) \\
				& \le M_1 (\|\zb_0\|_{\Vf} + M_2(\|\Psi\|^2_\mathbb{D} + \|\yb_\infty\|_{\Hzz} +\|\yb_\infty\|_{\Hzz}^2)  ),
			\end{aligned}
	\end{equation} }
	where the last inequality is obtained using Lemma \ref{vlem:est}(a). Now, choosing 
	\begin{align} \label{eq:Vel-rho-cndtn}
		\|\zb_0\|_{\Vf} \le \frac{\rho}{4M_1},\quad \|\yb_\infty\|_{\Hzz} \le \frac{\rho}{4M_1M_2}, \text{ and } 0<\rho \le \min \left\lbrace\frac{1}{4M_1M_2}, 4M_1M_2 \right\rbrace,
	\end{align}
	we obtain 
	\begin{align} \label{eq:Vel-selfmap-cndtn}
		\|\wt{\zb}_\Psi\|_D + \|\wt{\wb}_\Psi\|_{H^1(0,\infty; \Hzz)} & \le \rho. 
	\end{align}
	Let us define the maps $$\mathbb{S}_1:\mathbb{D}_\rho \to \mathbb{D}_\rho \times \mathbb{B}_\rho \ \text{ by }\  \mathbb{S}_1(\Psi)=(\wt{\zb}_\Psi, \wt{\wb}_\Psi)$$  and $$\mathbb{S}_2: \mathbb{D}_\rho \times \mathbb{B}_\rho \to \mathbb{D}_\rho\ \text{ by }\ \mathbb{S}_2(\wt{\zb}_\Psi, \wt{\wb}_\Psi)=\wt{\zb}_\Psi.$$ Then the map $\mathbb{S}:=\mathbb{S}_2 \circ \mathbb{S}_1: \mathbb{D}_\rho \to \mathbb{D}_\rho$ is well defined for all $\rho$ satisfying \eqref{eq:Vel-rho-cndtn}.
	Next, our aim is to show that the map $\mathbb{S}$ is a contraction. For any $\Psi_1, \Psi_2 \in \mathbb{D}_\rho,$ let $(\wt{\zb}_{\Psi_1}, \wt{\wb}_{\Psi_1})$ and $(\wt{\zb}_{\Psi_2}, \wt{\wb}_{\Psi_2})$ be the corresponding solutions pairs. Then $Z=\wt{\zb}_{\Psi_1} - \wt{\zb}_{\Psi_2}$ and $W=\wt{\wb}_{\Psi_1} - \wt{\wb}_{\Psi_2}$ satisfy
	
	\begin{equation} \label{eqn:vnon-linear main+f-diff}
		\left\lbrace
		\begin{aligned}
			& Z_{\Psi_t}(\cdot, t) +\eta  A_0  Z(\cdot, t)+\kappa A_0 W(\cdot,t)-\nu Z(\cdot, t) =\chi_{\mathcal{O}}(\cdot) K( Z(\cdot, t), W(\cdot, t)) \\
			& \hspace{7.3cm} + \fb(\Psi_1)-\fb(\Psi_2)\text{ in }\Omega\times(0, \infty),\\
			& W(\cdot, t) +\lambda W(\cdot, t)- Z(\cdot,t)-\nu W(\cdot,t)=0 \text{ in }\Omega\times(0, \infty),\\
			& Z=0, \quad \& \quad W=0 \text{ on } \Gamma\times (0,\infty), \\
			&  Z(x,0)=0, \quad \& \quad W(0,x)=0 \quad \text{ in }\Omega.
		\end{aligned}\right.
	\end{equation}
	In the above equation, we have used the linearity of $K,$ that is, $$K( \wt{\zb}_{\Psi_1} (\cdot, t), \wt{\wb}_{\Psi_1}(\cdot, t))-K( \wt{\zb}_{\Psi_2}(\cdot, t), \wt{\wb}_{\Psi_2}(\cdot, t))=K( Z(\cdot, t), W(\cdot, t)).$$ Again  for $\rho<\min\left\lbrace\frac{1}{4M_1M_2},4M_1M_2, 2\right\rbrace$, utilization of Theorem \ref{th:est-1} and Lemma \ref{vlem:est} leads to 
	\begin{align*}
		\|Z\|_\mathbb{D} + \|W\|_{H^1(0,\infty; \Hzz)} & \le M_1 \|\fb(\Psi_1)-\fb(\Psi_2)\|_{L^2(0,\infty;\Hb)} \\
		& \le M_1 M_2 \Big(\|\Psi_1\|_\mathbb{D}+\|\Psi_2\|_\mathbb{D} + \|\yb_\infty\|_{\Hzz}\Big)\|\Psi_1-\Psi_2\|_\mathbb{D} \\
		& \le M_1M_2\bigg(\rho + \rho + \frac{\rho}{4M_1M_2}\bigg)\|\Psi_1-\Psi_2\|_\mathbb{D}\\
		& < k \|\Psi_1-\Psi_2\|_\mathbb{D},
	\end{align*}
	for some $0<k<1$. Thus, by choosing any $$0<\rho_0<\min\left\lbrace\frac{1}{4M_1M_2},4M_1M_2, 2\right\rbrace\ \text{ and }\ M=\min\left\lbrace\frac{1}{4M_1}, \frac{1}{4M_1M_2}\right\rbrace,$$ we obtain that $\mathbb{S}$ is a contraction and self map on $\mathbb{D}_\rho$ for all $0<\rho\le \rho_0.$ Hence, by using Banach fixed point theorem, we have a fixed point of $\mathbb{S}$ and hence  a unique solution $(\wt{\zb}, \wt{\wb})$ of \eqref{eqn:vnon-linear main} satisfying 
	\begin{align*}
		\|\wt{\zb}\|_{L^2(0,\infty;\Hzz)}^2+\|\wt{\zb}\|_{L^\infty(0,\infty;\Vf)}^2+\|\wt{\zb}\|_{H^1(0,\infty;\Hb)}^2+\|\wt{\wb}\|_{H^1(0,\infty;\Hzz)}^2 \le 2\rho^2,
	\end{align*}
	for all $0<\rho\le \rho_0,$ $\|\zb_0\|_{\Vf}\le M \rho,$ $\|\fb_\infty\|_{\Hzz}\le M \rho,$ where $M$ and $\rho$ are as mentioned above. On substituting $\zb=e^{-\nu t}\wt{\zb}$ and $\wb=e^{-\nu t}\wt{\wb}$ in \eqref{eqn:vnon-linear main} becomes \eqref{eqn:vnon-linear closed loop main} and therefore \eqref{eqn:vnon-linear closed loop main} admits a unique solution 
	{\small
		$$(\zb, \wb)\in \left\lbrace L^2(0,\infty;\Hzz) \cap L^\infty(0,\infty;\Vf) \cap H^1(0,\infty;\Hb) \right\rbrace \times \left\lbrace H^1(0,\infty;\Hzz\cap\Hb) \right\rbrace$$} satisfying
	{\small
		\begin{align*}
			\|e^{\nu\cdot}\zb\|_{L^2(0,\infty;\Hzz)}^2+\|e^{\nu\cdot}\zb\|_{L^\infty(0,\infty;\Vf)}^2+\|e^{\nu\cdot}\zb\|_{H^1(0,\infty;\Hb)}^2+\|e^{\nu\cdot}\wb\|_{H^1(0,\infty;\Hzz)}^2 \le 2\rho^2.
	\end{align*}}
	To obtain the estimate in \eqref{est_exp}, we take $\Psi=\wt{\zb}_\Psi$ and repeat the estimate in \eqref{eq:estttDB}.
\end{proof}

\begin{Remark} \label{rem:Solv3DNSE2}
	The global solvability (global existence of a strong solution and uniqueness of Leray-Hopf weak solution) of the three-dimensional NSEs remains an open problem, with only partials results are known. In this context, we briefly discuss the local existence and uniqueness result derived from Theorem \ref{th:vstab non lin}. Specifically, by choosing the feedback operator \( K \) as the forcing term and setting \( \yb_\infty = 0 \) in \eqref{eqn:vnon-linear closed loop main}, one can establish a local existence and uniqueness of strong solutions for  NSEs with memory in three dimensions. Furthermore, for the case without memory, setting \( \kappa = 0 \) additionally simplifies the analysis. Hence, it is possible to address the local solvability (for small initial data, the existence and uniqueness of global strong solutions) of the three-dimensional NSEs, both with and without memory.
\end{Remark}

Now, we finally state the result on the main system in the integral form, that is, \eqref{eq:NSE_m_mod}.

\begin{Theorem} \label{th:mainresultStabIntegral}
	Let $\nu \in (0,\lambda).$ Then there exists a linear continuous operator $$\boldsymbol{K}\in \mathcal{L}(L^2(0,\infty;\Hb);\Hb),$$ $\rho_0>0, M>0,$ and $\wt{M}>0$ dependening on $\Omega, \eta, \kappa, \lambda,$ such that for all $0<\rho\le \rho_0,$ $\yb_0 \in \Vf$ and the solution $\yb_\infty\in \Hzz\cap \Vf$ of \eqref{eq:StdNSE} satisfying 
	\begin{align}
		\|\yb_0 -\yb_\infty\|_{\Vf}\le M\rho \text{ and } \|\yb_\infty\|_{\Hzz}\le M\rho,
	\end{align}
	such that a unique solution $\yb$ of 
	\begin{equation} \label{eq:NSE_m_mod-cls-final}
		\left\{
		\begin{aligned}
			& \yb_t -\eta \Delta \yb + (\yb\cdot \nabla) \yb +\nabla p - \kappa \int_0^t e^{-\lambda (t-s)}\Delta \yb(s) ds = \fb_\infty + \chi_{\mathcal{O}}\boldsymbol{K}\yb \text{ in } \Omega\times (0,\infty),\\
			& \nabla\cdot \yb=0 \text{ in } \Omega\times (0,\infty), \\
			& \yb=0  \text{ on } \Gamma\times (0,\infty), \\
			& \yb(0)=\yb_0 \text{ in }\Omega,
		\end{aligned}\right.
	\end{equation}
	satisfies 
	\begin{align}\label{stab y}
		& \|\yb(\cdot,t) -\yb_\infty\|_{\Vf}  + \left\| \int_0^t e^{-\lambda (t-s)} \left(\yb(\cdot,s) ds - \yb_\infty\right)\right\|_{\Hzz}\no \\
		& \qquad \qquad \le \wt{M}  e^{-\nu t}\Big(\|\yb_0-\yb_\infty\|_{\Vf}   + \|\yb_\infty\|_{\Hzz} + \|\yb_\infty\|_{\Hzz}^2\Big), \text{ for all }t>0.
	\end{align}
	Also, the pressure term satisfies
	\begin{align}
		\|p(\cdot,t)-p_\infty(\cdot,t)\|_{L^2(\Omega)} \le \wt{M}e^{-\nu t} \left(\|\yb_0-\yb_\infty\|_{\Vf}  + \|\yb_\infty\|_{\Hzz} + \|\yb_\infty\|_{\Hzz}^2 \right),
	\end{align}
	for all $t>0.$
\end{Theorem}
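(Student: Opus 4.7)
The plan is to reduce the integral-form equation \eqref{eq:NSE_m_mod-cls-final} to the coupled closed-loop system \eqref{eqn:vnon-linear closed loop main} already handled in Theorem \ref{th:vstab non lin}, then translate the conclusions back and recover the pressure separately. I set $\zb(\cdot,t):=\yb(\cdot,t)-\yi$, $q:=p-p_\infty$, and introduce
\[
\wb(\cdot,t):=\int_0^t e^{-\lambda(t-s)}\zb(\cdot,s)\,ds,
\]
which satisfies $\wb_t+\lambda\wb-\zb=0$ with $\wb(0)=0$. Using the identity derived in the Introduction, which rewrites $\int_0^t e^{-\lambda(t-s)}\Delta\yb(s)\,ds$ as $\Delta\wb$ plus a boundary-layer contribution involving $e^{-\lambda t}\Delta\yi$, and subtracting the stationary equation \eqref{eq:StdNSE} to eliminate $\fb_\infty$ and $p_\infty$, one checks that \eqref{eq:NSE_m_mod-cls-final} is equivalent to the coupled closed-loop system \eqref{eqn:vnon-linear closed loop main} in the unknowns $(\zb,\wb)$, provided that the integral-form feedback is defined by
\[
(\boldsymbol{K}\yb)(t):=K\!\left(\yb(\cdot,t)-\yi,\ \int_0^t e^{-\lambda(t-s)}\bigl(\yb(\cdot,s)-\yi\bigr)\,ds\right),
\]
where $K\in\mathcal{L}(\Hf,\Hb)$ is the feedback operator from Remark \ref{rem:F-Inf-Feed-Velo}. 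The continuity of the causal linear operator $\boldsymbol{K}$ in the required norms follows from the boundedness of $K$ together with the $L^2$-boundedness of the Volterra convolution with kernel $e^{-\lambda t}$.

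With this identification in place, I would invoke Theorem \ref{th:vstab non lin} using the same constants $\rho_0$ and $M$. Under the smallness hypotheses $\|\yb_0-\yi\|_{\Vf}\le M\rho$ and $\|\yi\|_{\Hzz}\le M\rho$, that theorem yields a unique pair $(\zb,\wb)$ in the regularity class \eqref{eqn-regu} satisfying the pointwise-in-time estimate \eqref{est_exp}. Setting $\yb:=\zb+\yi$ gives the unique solution of \eqref{eq:NSE_m_mod-cls-final}. The identity $\|\yb(\cdot,t)-\yi\|_{\Vf}=\|\zb(\cdot,t)\|_{\Vf}$ immediately produces the first half of \eqref{stab y}, and the definition of $\wb$ produces the second half, since $\int_0^t e^{-\lambda(t-s)}(\yb(\cdot,s)-\yi)\,ds=\wb(\cdot,t)$ and \eqref{est_exp} controls $\|\wb(\cdot,t)\|_{\Hzz}$.

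For the pressure, I would subtract \eqref{eq:StdNSE} from \eqref{eq:NSE_m_mod-cls-final} and take divergence. Because $\zb_t$, $\Delta\zb$, $\Delta\wb$ and $\Delta\yi$ are all divergence-free, they drop out, and $q$, with the normalization $\int_\Omega q\,dx=0$, solves an elliptic Neumann problem whose interior source and boundary data are pointwise-in-time controlled by $\|\zb(\cdot,t)\|_{\Vf}^2$, $\|\yi\|_{\Hzz}\|\zb(\cdot,t)\|_{\Vf}$, $\|\wb(\cdot,t)\|_{\Hzz}$ and $e^{-\lambda t}\|\yi\|_{\Hzz}$, using Sobolev embeddings $\Vf\hookrightarrow\mathbb{L}^4(\Omega)$ and $\Hzz\hookrightarrow\mathbb{L}^\infty(\Omega)$ to handle the convection $B(\zb,\zb)+B(\yi,\zb)+B(\zb,\yi)$. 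Standard elliptic regularity then gives
\[
\|q(\cdot,t)\|_{L^2(\Omega)}\le C\bigl(\|\zb(\cdot,t)\|_{\Vf}^2+\|\yi\|_{\Hzz}\|\zb(\cdot,t)\|_{\Vf}+\|\wb(\cdot,t)\|_{\Hzz}+e^{-\lambda t}\|\yi\|_{\Hzz}\bigr),
\]
and plugging in \eqref{est_exp}, while absorbing the quadratic term via $\|\zb\|_{\Vf}^2\le C\rho\, e^{-\nu t}(\cdots)$ thanks to the smallness of $\rho_0$, delivers the claimed $e^{-\nu t}$-decay of $p-p_\infty$ in $L^2(\Omega)$.

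The main obstacle I foresee is the pressure recovery: identifying the correct Neumann data when $\chi_{\mathcal{O}}\boldsymbol{K}\yb$ does not vanish on $\Gamma$ (in case $\overline{\mathcal{O}}$ touches the boundary) and tracking the $H^{-1}$-norms of the convection terms carefully enough to produce a clean $L^2$-bound on $q$ with the required structure. Modulo this bookkeeping, the proof is a direct composition of Theorem \ref{th:vstab non lin} with the change of variables $\zb=\yb-\yi$ and the definition of $\wb$.
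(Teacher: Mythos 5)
Your reduction to Theorem \ref{th:vstab non lin} via $\zb=\yb-\yi$ and $\wb(\cdot,t)=\int_0^t e^{-\lambda(t-s)}\zb(\cdot,s)\,ds$ is exactly the paper's strategy, so the core of your argument coincides with the paper's one-line proof of existence, uniqueness and the velocity estimate \eqref{stab y}. Two points of divergence are worth recording. First, the feedback operator: in \eqref{eq:fdbk-intgrl-NSE} the paper sets $\boldsymbol{K}\yb(\cdot,t)=K\bigl(\yb(\cdot,t),\int_0^t e^{-\lambda(t-s)}\yb(\cdot,s)\,ds\bigr)$, feeding back the full state, whereas you feed back the fluctuation $\yb-\yi$. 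Your choice makes the identification with the closed-loop system \eqref{eqn:vnon-linear closed loop main} exact, since that system carries $\chi_{\mathcal{O}}K(\zb,\wb)$; with the paper's definition one picks up the residual term $\chi_{\mathcal{O}}K\bigl(\yi,\tfrac{1-e^{-\lambda t}}{\lambda}\yi\bigr)$, which does not tend to zero and is not absorbed anywhere in the paper's reduction. The price of your fix is that $\boldsymbol{K}$ becomes affine rather than linear in $\yb$, in tension with the statement's claim $\boldsymbol{K}\in\mathcal{L}(\cdot)$ --- but that tension lives in the theorem statement, not in your argument.

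Second, the pressure. The paper does not take the divergence: it recovers $\nabla q$ via de Rham's theorem together with maximal regularity for the Stokes problem, bounding $\|\nabla q\|_{L^2(0,\infty;\Lb)}$ by the $L^2$-in-time norms of $\mathcal{G}_1(\zb,\yi)$, $\zb$ in $\Hzz$ and $\wb$ in $\Hzz$, thereby avoiding any discussion of Neumann boundary data. Your elliptic Neumann route is closer in spirit to the pointwise-in-time bound the theorem actually asserts, but the obstacle you flag is real and is not merely bookkeeping: the Neumann data involve the normal trace of $\eta\Delta\zb+\kappa\Delta\wb$ on $\Gamma$, and $\Delta\zb(\cdot,t)$ is controlled only through $\|\zb(\cdot,t)\|_{\Hzz}$, which by \eqref{eqn-regu} is available merely for a.e.\ $t$ (in $L^2$ of time), so a genuinely pointwise elliptic estimate of the form you write cannot be extracted without additional regularity in time. (To be fair, the paper's own passage from an $L^2(0,\infty)$-in-time bound on $\nabla q$ to the claimed pointwise exponential decay is equally abrupt.) Apart from these two items, your proposal is a faithful reconstruction of the paper's proof.
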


\begin{proof}
	With the help of $K$ given in Remark \ref{rem:F-Inf-Feed-Velo}, we define $\boldsymbol{K}$ as 
	\begin{align} \label{eq:fdbk-intgrl-NSE}
		\boldsymbol{K}\yb(\cdot, t)= K\left(\yb(\cdot,t), \int_0^t e^{-\lambda (t-s)} \yb(\cdot,s)ds\right) \text{ for all }t>0.
	\end{align}
	Then clearly, $\boldsymbol{K} \in \mathcal{L}(L^2(0,\infty;\Hb);\Hb).$ Then the rest of the proof follows from Theorem \ref{th:vstab non lin} by setting $\wb(\cdot,t)=\int_0^t e^{-\lambda (t-s)}\yb (\cdot,s) ds.$


	\vskip 0.1 cm
	\noindent \textbf{Estimate of the pressure term.} 
	\noindent      Once we have the estimate \eqref{est_exp}, we have from \eqref{eqfeedbackcntrl} and \eqref{eq:fdbk-intgrl-NSE} that 
	\begin{align}\label{exp_u}
		\|\boldsymbol{v}(\cdot,t)\|_{\Hb}\leq M e^{-\nu t} \left( \|\zb_0\|_{\Vf} + \|\yb_\infty\|_{\Hzz} + \|\yb_\infty\|_{\Hzz}^2 \right) \text{ a.e. } t>0.
	\end{align}
	Then following \cite[Thoerem 5.7]{RRS16}, we can estimate pressure $q=p-p_\infty$ in terms of $\zb=\yb-\yb_\infty$ from the eqaution \eqref{eq:NSE_m_mod_z}. Taking the Leray projection $\Pb$ on each term of $\eqref{eq:NSE_m_mod_z}_1$ we get
	\begin{align}\label{pre1}
		\zb_t+A_0\zb-\kappa\int_0^te^{-\lambda(t-s)}A_0\zb(s) ds=\mathcal{G}(\zb,\yb_\infty) \ \text{ in }\ \Hb,
	\end{align}
	where $$\mathcal{G}(\zb,\yb_\infty)=\frac{\kappa}{\lambda}e^{-\lambda t}A_0\yb_\infty+\boldsymbol{v}\chi_{\mathcal{O}}-\Pb[(\zb\cdot\nabla)\zb+(\zb\cdot\nabla)\yb_\infty+(\yb_\infty\cdot\nabla)\zb].$$ Note that the kernel $e^{-\lambda t}$ is positive, that is, $\int_0^T\int_0^te^{-\delta(t-s)}(\zb(s),\zb(t))dsdt\geq 0$ for all $T>0$ and $\zb\in L^2(0,T;\Hb)$. By using this fact and since $\mathcal{G}(\zb,\yb_\infty)\in L^2(0,\infty;\Hb),$ one can obtain the existence of a unique strong solution $\zb\in L^{\infty}(0,\infty;\Vf)\cap L^{2}(0,\infty;\Hzz\cap\Vf)$ to the problem \eqref{pre1}. Since $\boldsymbol{w}=\int_0^te^{-\lambda(t-s)}\zb(s) ds\in L^2(0,\infty;\Hzz\cap\Vf)$ (\eqref{eqn-regu}),  we immediately have $A_0\boldsymbol{w}=\int_0^te^{-\lambda(t-s)}A_0\zb(s) ds\in L^2(0,\infty;\Hb)$. Therefore, rewriting the equation \eqref{pre1}, as $\zb_t+A_0\zb=\kappa A_0\boldsymbol{w}+\mathcal{G}(\zb,\yb_\infty),$ and using the  maximal regularity of the Stokes operator, we find
	\begin{align}\label{eqn-est}
		\|\zb_t\|_{L^2(0,\infty;\Hb)}+\|\zb\|_{L^2(0,\infty;\Hzz)}\leq C\left(\|\boldsymbol{w}\|_{L^2(0,\infty;\Hzz)}+\|\mathcal{G}(\zb,\yb_\infty)\|_{L^2(0,\infty;\Hb)}\right).
	\end{align}
	Therefore using a profound result of de Rham (see \cite[Proposition 1.1 and Remark 1.4]{TemamNSE01},  \cite[Theorem 2.28]{RRS16}), there exist a $q$ such that for a.e. $t\in(0,\infty),$ we have 
	\begin{align*}
		-\nabla q=\zb_t-\eta\Delta\zb-\kappa\Delta\boldsymbol{w}-\mathcal{G}_1(\zb,\yb_\infty),
	\end{align*}
	where 
	$$\mathcal{G}_1(\zb,\yb_\infty)=\frac{\kappa}{\lambda}e^{-\lambda t}\Delta\yb_\infty+\chi_{\mathcal{O}}\boldsymbol{u}-(\zb\cdot\nabla)\zb-(\zb\cdot\nabla)\yb_\infty-(\yb_\infty\cdot\nabla)\zb.$$
	Then using the estimate \eqref{eqn-est}, we further have 
	\begin{align*}
		\|\nabla q\|_{L^2(0,\infty;\Lb)}&\leq C\|\mathcal{G}_1(\zb,\yb_\infty)\|_{L^2(0,\infty;\Lb)}\no\\
		&\leq C\|\zb\|_{L^\infty(0,\infty;\Vf)}\left(\|\zb\|_{L^2(0,\infty;\Hzz)}+\|\yb_\infty\|_{\Hzz}\right)\|\boldsymbol{w}\|_{L^2(0,\infty;\Hzz)}\\&\quad+\|\ub\|_{L^2(0,\infty;\Lb)}.
	\end{align*}
	Then recalling the expression $\zb=\yb-\yb_\infty$ and $q=p-p_\infty$ from the equation \eqref{eq:NSE_m_mod_z} and using \eqref{est_exp}, \eqref{exp_u}, and  \eqref{stab y}, we finally obtain
	$$\|\nabla(p(.,t)-p_\infty)\|_{L^2(\Omega)}\leq \wt{M}  e^{-\nu t}\Big(\|\yb_0-\yb_\infty\|_{\Vf}   + \|\yb_\infty\|_{\Hzz} + \|\yb_\infty\|_{\Hzz}^2\Big),$$
	which completes the proof. 
\end{proof}

\begin{Remark}\label{rem:decayRate-NSE}
	Note that the decay rate obtained in Theorems \ref{th:vstab non lin} - \ref{th:mainresultStabIntegral} is bounded by $\lambda.$ This is due to the estimate in Lemma \ref{vlem:est}(a). However, one can easily observe that the decay rate is better if we study the stabilizability around a zero steady state, in that case the decay rate is $\nu$ for any $\nu \in (0,\nu_0),$ where $\nu_0=\lambda+\frac{\kappa}{\eta},$ as obtained in \eqref{eqlambda0}. In fact, we have the following theorem on stabilizability around zero steady state. 
\end{Remark}

\begin{Theorem} 
	Let $\nu \in (0,\nu_0),$ where $\nu_0$ is as defined in \eqref{eqlambda0}. Then there exists a linear continuous operator $$\boldsymbol{K}\in \mathcal{L}(L^2(0,\infty;\Hb);\Hb),$$ $\rho_0>0, M>0,$ and $\wt{M}>0$ dependening on $\Omega, \eta, \kappa, \lambda,$ such that for all $0<\rho\le \rho_0,$ and $\yb_0 \in \Vf$ satisfying 
	\begin{align}
		\|\yb_0\|_{\Vf}\le M\rho,
	\end{align}
	such that a unique solution $\yb$ of 
	\begin{equation} 
		\left\{
		\begin{aligned}
			& \yb_t -\eta \Delta \yb + (\yb\cdot \nabla) \yb +\nabla p - \kappa \int_0^t e^{-\lambda (t-s)}\Delta \yb(s) ds = \fb_\infty + \chi_{\mathcal{O}}\boldsymbol{K}\yb \text{ in } \Omega\times (0,\infty),\\
			& \nabla\cdot \yb=0 \text{ in } \Omega\times (0,\infty), \\
			& \yb=0  \text{ on } \Gamma\times (0,\infty), \\
			& \yb(0)=\yb_0 \text{ in }\Omega,
		\end{aligned}\right.
	\end{equation}
	satisfies 
	\begin{align}
		& \|\yb(\cdot,t)\|_{\Vf}  + \left\| \int_0^t e^{-\lambda (t-s)} \yb(\cdot,s) ds \right\|_{\Hzz}  \le \wt{M}  e^{-\nu t}\|\yb_0\|_{\Vf}, \text{ for all }t>0.
	\end{align}
	Also, the pressure term satisfies
	\begin{align}
		\|p(\cdot,t)\|_{L^2(\Omega)} \le \wt{M}e^{-\nu t} \|\yb_0\|_{\Vf}  \text{ for all }t>0.
	\end{align}
\end{Theorem}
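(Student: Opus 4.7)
The statement is the zero–steady–state analogue of Theorem \ref{th:mainresultStabIntegral}, and the plan is to re-run the full-system argument with the simplifications produced by taking $\yb_\infty=0$. First I would set $\zb=\yb$, $q=p$, introduce the memory variable $\wb(\cdot,t)=\int_0^t e^{-\lambda(t-s)}\yb(\cdot,s)\,ds$, and recast \eqref{eq:NSE_m_mod} in the coupled form \eqref{eqn:non-linear intromain} with $\yb_\infty\equiv 0$. The feedback operator $\boldsymbol{K}$ on the integral variable is defined exactly as in \eqref{eq:fdbk-intgrl-NSE}, using the operator $K=-\Bc^*\Pc$ from \eqref{eqfeedbackcntrl}, where $\Pc$ solves the Riccati equation \eqref{eqn:ARE_vel} associated with the shifted principal operator $\Ac_\nu$ for the prescribed $\nu\in(0,\nu_0)$. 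Since $\nu<\nu_0$, Theorem \ref{th:stb cnt_vel} applies without modification and produces an exponentially stable closed-loop semigroup on $\Hf$ with decay $e^{-\gamma t}$ for some $\gamma>0$.

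Next I would shift by $\nu$: set $\wt{\zb}=e^{\nu t}\zb$, $\wt{\wb}=e^{\nu t}\wb$, so that the closed-loop system takes the form \eqref{eqn:vnon-linear main+f} with forcing
\[
\boldsymbol{f}(\Psi)(\cdot,t)=-e^{-\nu t}B(\Psi,\Psi),
\]
the remaining three terms of \eqref{veqNL1} dropping out because $\yb_\infty=0$. The linear regularity result Theorem \ref{th:est-1} is valid for the whole range $\nu\in(0,\nu_0)$ (its proof only uses that $\nu<\nu_0=\lambda+\kappa/\eta$, which is what ensures the exponential representation of $\wt{\wb}$ in terms of $\boldsymbol{p}$), so it still applies here.

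The fixed-point argument of Theorem \ref{th:vstab non lin} then goes through with a cleaner bound: on the space $\mathbb{D}$ of \eqref{veqspace1}, the nonlinear map satisfies
\[
\|\boldsymbol{f}(\Psi)\|_{L^2(0,\infty;\Hb)}\le M_2\|\Psi\|_\mathbb{D}^2,\qquad
\|\boldsymbol{f}(\Psi_1)-\boldsymbol{f}(\Psi_2)\|_{L^2(0,\infty;\Hb)}\le M_2(\|\Psi_1\|_\mathbb{D}+\|\Psi_2\|_\mathbb{D})\|\Psi_1-\Psi_2\|_\mathbb{D},
\]
which is Lemma \ref{vlem:est} with the $\yb_\infty$-terms deleted. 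Combining with Theorem \ref{th:est-1}, for $\rho$ small enough and $\|\yb_0\|_{\Vf}\le M\rho$ the solution map $\mathbb{S}$ is a self-map and a contraction on $\mathbb{D}_\rho\times\mathbb{B}_\rho$; Banach's theorem delivers a unique solution $(\wt{\zb},\wt{\wb})$ with the required norm bound. Undoing the shift gives $\|\yb(\cdot,t)\|_{\Vf}+\|\wb(\cdot,t)\|_{\Hzz}\lesssim e^{-\nu t}\|\yb_0\|_{\Vf}$. The pressure bound is obtained exactly as at the end of the proof of Theorem \ref{th:mainresultStabIntegral}, via de Rham's theorem applied to the velocity equation and maximal regularity for the Stokes operator, with all $\yb_\infty$-contributions suppressed.

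The only subtle point — and what makes the decay rate $\nu_0$ instead of $\lambda$ available — is that the obstruction in the non-zero steady-state case, namely the term $\tfrac{\kappa}{\lambda}e^{-(\lambda-\nu)t}A_0\yb_\infty$ in \eqref{veqNL1} that forces $\nu<\lambda$ for $L^2$-in-time integrability, is absent here. All other ingredients (analyticity of $\Ac_\nu$, Hautus-based stabilizability in Proposition \ref{prop:stab by Hautus}, the Riccati synthesis of Theorem \ref{th:stb cnt_vel}, and the maximal-regularity estimate of Theorem \ref{th:est-1}) are already valid on the full range $\nu\in(0,\nu_0)$, so there is no additional obstacle; the proof is essentially a specialization of Theorem \ref{th:mainresultStabIntegral} with a sharper decay constant.
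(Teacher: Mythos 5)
Your proposal is correct and follows exactly the route the paper intends: the theorem is stated without a separate proof, with Remark \ref{rem:decayRate-NSE} pointing out precisely your key observation, namely that the only source of the restriction $\nu<\lambda$ is the term $\tfrac{\kappa}{\lambda}e^{-(\lambda-\nu)t}A_0\yb_\infty$ in Lemma \ref{vlem:est}(a), which vanishes when $\yb_\infty=0$, so the fixed-point argument of Theorems \ref{th:vstab non lin}--\ref{th:mainresultStabIntegral} runs unchanged for the full range $\nu\in(0,\nu_0)$. Your specialization of Lemma \ref{vlem:est}, the unchanged use of Theorems \ref{th:stb cnt_vel} and \ref{th:est-1}, and the pressure estimate via de Rham all match the paper's argument.
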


\appendix


\section{Stabilizability of Vorticity Equation} \label{app:Vort}
We will discuss the stabilizability of vorticity equation in a domain with periodic boundary in two dimensions. In particular, we choose the domain as a two-dimensional torus $\T^2:=[0,2\pi]^2.$
\subsection{The Biot-Savart law} We recall the Biot-Savart law to express the velocity vector field $\yb$ in terms of the vorticity scaler field $w$ (see \cite{mm12}). We have $w=\nabla\times\yb.$ In general (cf. \cite{FZ19}), the Biot-Savart law express the velocity in terms of vorticity as 
\begin{align}\label{VV}
	\yb(x)=(\boldsymbol{k}\ast w)(x)=\int_{\T^2}\boldsymbol{k}(x-y) w(y) \, dy,
\end{align}
where Biot-Savart kernel is given by 
\begin{align}\label{BSK}
	\boldsymbol{k}=\Big(-\frac{\partial G}{\partial x_2}, \frac{\partial G}{\partial x_1}\Big),
\end{align}
and $G$ is the Green function of the Laplacian on the torus with mean zero. We also recall the estimate from \cite[(2.17), (2.18)]{FZ19} as
\begin{align} \label{eq:Linf-LpestT2}
	\|\boldsymbol{k}\ast w\|_{\mathbb{L}^\infty(\T^2)}=\|\yb\|_{\mathbb{L}^\infty(\T^2)}\leq C_p\| w\|_{L^p(\T^2)},
\end{align}
for any $p>2$ and a positive constant $C_p$. Also when $p\geq 1$, $1\leq \alpha<2, \beta\geq 1$ with $\frac{1}{p}+1=\frac{1}{\alpha}+\frac{1}{\beta}$ we infer that 
\begin{align}
	\|\boldsymbol{k}\ast w\|_{\mathbb{L}^p(\T^2)}=\|\yb\|_{\mathbb{L}^p(\T^2)}\leq \|\boldsymbol{k}\|_{\mathbb{L}^\alpha(\T^2)}\|w\|_{L^\beta(\T^2)}.
\end{align}
Reformulating the dynamics \eqref{eq:NSE_m_mod} in terms of vorticity $w=\nabla^\perp\yb = \nabla\times \yb,$ we obtain

\begin{equation} \label{eq:NSEMem_vort}
	\left\{
	\begin{aligned}
		& w_t - \eta \Delta w + (\boldsymbol{k}*w)\cdot \nabla w - \kappa \int_0^t e^{-\lambda(t-s)}\Delta w(s)ds = f_\infty + u\chi_{\mathcal{O}} \text{ in } \mathbb{T}^2\times (0,\infty), \\
		& w(0)=w_0 \text{ in }\mathbb{T}^2,
	\end{aligned}\right.
\end{equation}
where $f_\infty=\nabla \times \fb_\infty$ and $u =\nabla\times \ub.$

We study the feedback stabilization of \eqref{eq:NSEMem_vort} around the steady state obtained by reformulating \eqref{eq:StdNSE} in terms of vorticity $w_\infty=\nabla^\perp \yb_\infty=\nabla \times \yb_\infty$
\begin{equation} \label{eq:stdNSEVort}
	-\eta \Delta w_\infty + (\boldsymbol{k}*w_\infty) \cdot \nabla w_\infty -\frac{\kappa}{\lambda}\Delta w_\infty= f_\infty \text{ in }\T^2.
\end{equation}
Setting $z=w-w_\infty,$ we obtain the system in $z$ as
\begin{equation} \label{eq:NSEMem_vort-cpl}
	\left\{
	\begin{aligned}
		& z_t - \eta \Delta z + (\boldsymbol{k}*z)\cdot \nabla z + (\boldsymbol{k}*z)\cdot \nabla w_\infty + (\boldsymbol{k}*w_\infty)\cdot \nabla z \\
		& \qquad \qquad - \kappa \int_0^t e^{-\lambda(t-s)}\Delta z(s)ds -\frac{\kappa}{\lambda}e^{-\lambda t}\Delta w_\infty=  u\chi_{\mathcal{O}} \text{ in } \mathbb{T}^2\times (0,\infty), \\
		& z(0)=w_0-w_\infty=:z_0 \text{ in }\mathbb{T}^2.
	\end{aligned}\right.
\end{equation}
Here, we follow the technique discussed in \cite{WKR}. Set $v(x,t):=\int_0^t e^{-\lambda(t-s)} z(x,s) ds$, then system \eqref{eq:NSEMem_vort} can equivalently written as  
\begin{equation} \label{eqn:non-linear intromainVort}
	\left\{
	\begin{aligned}
		& z_t - \eta \Delta z +(\boldsymbol{k}*z)\cdot \nabla z + (\boldsymbol{k}*z)\cdot \nabla w_\infty + (\boldsymbol{k}*w_\infty)\cdot \nabla z \\
		& \qquad \qquad \qquad - \kappa \Delta v -\frac{\kappa}{\lambda}e^{-\lambda t}\Delta w_\infty= u\chi_{\mathcal{O}} \text{ in } \mathbb{T}^2\times (0,\infty), \\
		& v_t+\lambda v - z= 0 \text{ in } \mathbb{T}^2\times (0,\infty), \\
		& z(0)=z_0, \quad \& \quad  v(0)=0\text{ in }\mathbb{T}^2.
	\end{aligned}\right.
\end{equation}

\subsection{Principal System}
Before studying the full system, we first focus on the stabilizability of the following system:
\begin{equation} \label{eq:NSEMem_vortLinCpl}
	\left\{
	\begin{aligned}
		& z_t - \nu \Delta z - \kappa \Delta v = u\chi_{\mathcal{O}} \text{ in } \mathbb{T}^2\times (0,\infty), \\
		& v_t+\lambda v - z= 0 \text{ in } \mathbb{T}^2\times (0,\infty), \\
		& z(0)=z_0, \quad  \& \quad v(0)=0\text{ in }\mathbb{T}^2.
	\end{aligned}\right.
\end{equation}
Let us first recall the space $L_{\sharp}^2(\mathbb{T}^2)$ as $L_{\sharp}^2(\mathbb{T}^2)= \left\lbrace z \in L^2(\mathbb{T}^2)\, |\, \int_{\mathbb{T}^2} z dx=0 \right\rbrace$ with the usual $L^2-$ inner product and norm.
Set $\mathcal{H} =L_{\sharp}^2(\mathbb{T}^2)\times L^2_{\sharp}(\mathbb{T}^2)$ equipped with the usual inner product and norm in product space. For $X(t)=\begin{pmatrix} z(\cdot,t) \\ v(\cdot,t) \end{pmatrix},$ the above coupled equation can be re-written as 
\begin{equation} \label{eq:linOp}
	X'(t)= A X(t) + B u(t), \text{ for all }t>0, \quad X(0)= \begin{pmatrix} z_0 \\ 0 \end{pmatrix},
\end{equation}
where $A: D(A) \subset \mathcal{H} \rightarrow \mathcal{H}$ with $D(A)=\{ (z,v) \in \mathcal{H} \, |\, \eta z+\kappa v \in H^2(\mathbb{T}^2)  \}$ is the unbounded operator defined as 
\begin{equation} \label{eqdef:A}
	A \begin{pmatrix} z \\ v \end{pmatrix} = \begin{pmatrix} \Delta (\eta z+\kappa v) \\ -\lambda v+z \end{pmatrix} \text{ for all } \begin{pmatrix} z \\ v \end{pmatrix} \in D(A),
\end{equation}
and the control operator $B : L^2(\mathbb{T}^2) \rightarrow \mathcal{H} $ is defined as 
\begin{equation} \label{eqdef:B-Vort}
	B u= \begin{pmatrix} u \chi_{\mathcal{O}} \\ 0 \end{pmatrix} \text{ for all } u \in L^2(\T^2).
\end{equation}
Next, we calculate the adjoint operator $(A^*, D(A^*))$ (see \cite[Section A.2]{WKRPCE} for a similar calculation) of $(A, D(A))$ as
\begin{align} \label{eqdef:A*}
	A^* \begin{pmatrix} \phi \\ \psi \end{pmatrix} = \begin{pmatrix} \nu\Delta \phi +\psi\\ \kappa \Delta \phi-\lambda\psi \end{pmatrix} \text{ for all } \begin{pmatrix} \phi \\ \psi \end{pmatrix} \in D(A^*):=\left\lbrace \begin{pmatrix} \phi \\ \psi \end{pmatrix} \in \mathcal{H} \, |\, \phi \in H^2(\mathbb{T}^2) \right\rbrace.  
\end{align}
The adjoint operator $B^*:\mathcal{H} \to L^2(\mathbb{T}^2) $ corresponding to $B$ is defined as 
\begin{align} \label{eqdef:adj-B}
	B^* \begin{pmatrix} \phi \\ \psi \end{pmatrix} = \phi\chi_\mathcal{O} \text{ for all } \begin{pmatrix} \phi \\ \psi \end{pmatrix} \in \mathcal{H}.
\end{align}

\subsection{Analytic semigroup and Spectral properties} 
Following \cite{WKR}, below we list down some properties of the operator $A$. 
\begin{Theorem}
	(a) The operator $(A, D(A))$ defined in \eqref{eqdef:A} is densely defined and closed. Furthermore, the half plane $\{\mu \in \mathbb{C} \, | \Re(\mu)\ge 0\} $ is contained in the resolvent set $\rho(A)$ of $A,$  and the following resolvent estimate holds true for some constant $C>0,$ independent of $\mu:$
	\begin{align*}
		\| R(\mu, A)\|_{\mathcal{L}(\mathcal{H})} \le \frac{C}{|\mu|} \text{ for all }\mu (\neq 0) \in \mathbb{C} \text{ with } \Re(\mu) \ge 0.
	\end{align*}
	(b) The operator $(A, D(A))$ generates an analytic semigroup $\{e^{tA}\}_{t\ge 0}$ on $\mathcal{H}.$ \\
	(c) For any $X_0\in \mathcal{H}$ and $ \boldsymbol{f}\in L^2(0,\infty; \mathcal{H}),$ the system 
	\begin{align}
		X'(t)=A X(t) + \boldsymbol{f}(t) \text{ for all }t>0, \quad X(0)=X_0,
	\end{align}
	admits a unique mild solution in $C([0,\infty); \mathcal{H})$ with the representation 
	\begin{align*}
		X(t)=e^{tA}X_0 +\int_0^t e^{(t-s)A} \boldsymbol{f}(s) ds \text{ for all }t>0.
	\end{align*}
\end{Theorem}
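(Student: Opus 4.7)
The plan is to mimic the proof of Theorem \ref{pps:existence} for the velocity case, which itself followed \cite[Propositions 3.1 and 3.2]{WKR}. The essential simplification in the vorticity setting is that we may work directly with the Fourier basis of $L^2_\sharp(\mathbb{T}^2)$: the Laplacian is diagonalized in $\{e^{ik\cdot x}\}_{k\in \mathbb{Z}^2\setminus\{0\}}$ with eigenvalues $|k|^2$, which plays the role of the Stokes eigensystem $\{\Phi_m,\sigma_m\}$ used for $\mathcal{A}$. Otherwise, the $2\times 2$ matrix structure of $(A,D(A))$ is exactly parallel to that of $(\mathcal{A},D(\mathcal{A}))$ in \eqref{123}, and the algebra carries over line by line.

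For part (a), density of $D(A)$ in $\mathcal{H}$ is immediate because $C^\infty_\sharp(\mathbb{T}^2)\times C^\infty_\sharp(\mathbb{T}^2)\subset D(A)$ and smooth mean-zero functions are dense in $L^2_\sharp(\mathbb{T}^2)$. Closedness is handled by taking $(z_n,v_n)\in D(A)$ with $(z_n,v_n)\to (z,v)$ in $\mathcal{H}$ and $A(z_n,v_n)\to (F,G)$, reading off $G=-\lambda v+z$ from the second component and then using $\Delta(\eta z_n+\kappa v_n)\to F$ in $L^2_\sharp(\mathbb{T}^2)$ together with periodic elliptic regularity to place $\eta z+\kappa v$ back in $H^2(\mathbb{T}^2)$.

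The main step is the resolvent estimate. Given $(f,g)\in \mathcal{H}$ and $\mu\in\mathbb{C}$ with $\Re\mu \ge 0$, $\mu\neq 0$, the identity $(\mu I-A)(z,v)=(f,g)$ reduces, via $v=(z+g)/(\mu+\lambda)$ from the second row (well-defined since $\mu+\lambda\neq 0$), to the scalar problem
\[
\mu z-\left(\eta+\tfrac{\kappa}{\mu+\lambda}\right)\Delta z \;=\; f+\tfrac{\kappa}{\mu+\lambda}\Delta g \quad \text{in } L^2_\sharp(\mathbb{T}^2).
\]
Expanding in Fourier modes and introducing the characteristic polynomial
\[
P_k(\mu) := (\mu+\lambda)(\mu+\eta|k|^2)+\kappa|k|^2 = (\mu-\mu_k^+)(\mu-\mu_k^-),
\]
whose roots are the exact vorticity analogues of the eigenvalues in Theorem \ref{th:sspecAna}, I obtain the explicit formula
\[
\hat z_k \;=\; \frac{(\mu+\lambda)\hat f_k-\kappa|k|^2\hat g_k}{P_k(\mu)}, \qquad k\in \mathbb{Z}^2\setminus \{0\}.
\]
Since $\Re(\mu_k^\pm)<0$ for every $k$, the right half-plane lies in $\rho(A)$, and the desired bound $\|R(\mu,A)\|_{\mathcal{L}(\mathcal{H})}\le C/|\mu|$ reduces to the uniform two-sided estimate
\[
|P_k(\mu)| \;\geq\; c\,|\mu|\,|\mu+\lambda|\quad \text{for all } k\in\mathbb{Z}^2\setminus\{0\},\ \Re\mu\ge 0,\ \mu\neq 0,
\]
together with boundedness of $\kappa|k|^2/|P_k(\mu)|$. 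I expect this uniform-in-$k$ lower bound on $|P_k(\mu)|$ to be the genuine technical obstacle: one must balance $\Re P_k$ against $\Im P_k = \Im\mu\,[2\Re\mu + \lambda + \eta|k|^2]$ separately in the regimes of large $|\Im\mu|$ versus large $|k|^2$, in the spirit of the spectral localization already carried out for $\mathcal{A}$.

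Once the half-plane resolvent estimate in (a) is secured, part (b) is immediate from the standard characterization of generators of bounded analytic semigroups (the half-plane estimate extends to a sector of angle strictly greater than $\pi/2$ by a Neumann series argument, exactly as in \cite[Theorem 3.2]{WKR}), producing $\{e^{tA}\}_{t\ge 0}$. Finally, for part (c), the mild representation $X(t)=e^{tA}X_0+\int_0^t e^{(t-s)A}\boldsymbol{f}(s)\,ds$, its uniqueness, and its $C([0,\infty);\mathcal{H})$ regularity all follow directly from \cite[Proposition 3.1, Chapter 1, Part II]{BDDM}, verbatim as in the proof of Theorem \ref{pps:existence}(b).
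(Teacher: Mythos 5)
Your proposal follows exactly the route the paper intends: the paper offers no proof of this theorem beyond the remark ``Following \cite{WKR}, below we list down some properties of the operator $A$,'' and its velocity analogue (Theorem \ref{pps:existence}) is proved by a one-line citation to \cite[Propositions 3.1 and 3.2]{WKR} and \cite[Proposition 3.1, Chapter 1, Part II]{BDDM}; your Fourier diagonalization on $\mathbb{T}^2$, the elimination $v=(z+g)/(\mu+\lambda)$, and the reduction to the characteristic polynomial $P_k(\mu)$ are a faithful and more explicit rendering of that argument. One small but genuine imprecision: for the resolvent bound $\|R(\mu,A)\|_{\mathcal{L}(\mathcal{H})}\le C/|\mu|$ you need $\kappa|k|^2/|P_k(\mu)|\le C/|\mu|$ uniformly in $k$, not merely \emph{boundedness} of $\kappa|k|^2/|P_k(\mu)|$ as you state --- boundedness alone gives only $\|R(\mu,A)\|\le C$, which does not suffice for analyticity; you should also record the $v$-component estimate, which uses $|\mu+\lambda|\ge|\mu|$ for $\Re\mu\ge 0$. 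Finally, the uniform lower bound $|P_k(\mu)|\ge c\,|\mu|\,|\mu+\lambda|$ that you correctly identify as the crux is only sketched (``balance $\Re P_k$ against $\Im P_k$ in two regimes''); this matches the paper's own level of detail, since that estimate is precisely what is imported from \cite{WKR}, but a self-contained proof would have to carry it out, e.g.\ via the factorization $P_k(\mu)=(\mu-\widetilde{\mu}_k^+)(\mu-\widetilde{\mu}_k^-)$ and the localization of the roots from Theorem \ref{th:specAna}.
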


To discuss the spectral analysis of the operator $(A, D(A)),$ we first discuss the eigenvalues of the operator $-\Delta$ on the periodic domain $\mathbb{T}^2$. Consider the eigenvalue problem
\begin{equation}
	\begin{aligned}
		-\Delta \phi = \nu_k \phi \text{ in } \mathbb{T}^2.
	\end{aligned}
\end{equation}
Then, there exists (see \cite[Page 166]{RobinIDDS}) an infinite sequence of eigenvalues 
\begin{align} \label{eq:eigvalLapT2}
	\nu_k=|k|^2,
\end{align}
where $k$ is multi-indices in $\mathbb{N}^2.$ The corresponding eigenfunctions are 
\begin{align*}
	\phi_k(x)=\frac{1}{\pi} e^{k\cdot x}.
\end{align*}
Now, on arranging the multi-indices $k$ in such a way that $k_1<k_2$ whenever $|k_1|<|k_2|,$ we obtain 
\begin{align*}
	0<\nu_1\le \nu_2\le \ldots \le \nu_n\le \ldots \rightarrow \infty.
\end{align*}
Using this, we now discuss the spectral properties of the operator $(A, D(A))$ on $\mathcal{H}$ as follows:

\begin{Theorem}[spectral analysis]\label{th:specAna}
	Let $(A, D(A))$ be as defined in \eqref{eqdef:A}. The eigenvalues of $A$ are consists of two sequences $\widetilde{\mu}_k^+$ and $\widetilde{\mu}_k^-,$ where
	\begin{align}
		\widetilde{\mu}_k^\pm = \frac{-(\lambda+\eta \nu_k) \pm \sqrt{(\lambda+\eta \nu_k)^2 - 4\nu_k (\eta \lambda+\kappa)}}{2},
	\end{align}
	where $\nu_k$ are eigenvalues of $-\Delta$ as discussed in \eqref{eq:eigvalLapT2}. Moreover, we have the following properties: 
	\begin{itemize}
		\item[(a)] There are only finitely many complex eigenvalues of $A.$ 
		\item[(b)] The sequence $\widetilde{\mu}_k^+$ converges to $-\nu_0$ as $k\rightarrow \infty$ while the other sequence $\widetilde{\mu}_k^-$ behaves like $-\eta\nu_k$ and goes to $-\infty$ as $k\to \infty,$ where $\nu_0$ is same as defined in \eqref{eqlambda0}.
		\item[(c)] All the eigenvalues have negative real part. In particular, there exists $N_1,N_2\in \mathbb{N}$ such that 
		\begin{align*}
			& \text{ for all }k< N_1, \quad  - \nu_0 < \Re(\widetilde{\mu}_k^+)<0 ,    \quad \text{ for all }k\ge N_1, \Re(\widetilde{\mu}_k^+) \le -\nu_0, \\
			& \text{ for all }k< N_2, \quad  - \nu_0 < \Re(\widetilde{\mu}_k^-)<0 ,    \quad \text{ for all }k\ge N_2, \Re(\widetilde{\mu}_k^-)\le - \nu_0.
		\end{align*}
	\end{itemize}
\end{Theorem}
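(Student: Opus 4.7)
The plan is to mimic the argument used for the velocity case in Theorem \ref{th:sspecAna}, exploiting the explicit scalar eigenbasis $\{\phi_k\}$ of $-\Delta$ on $\mathbb{T}^2$. Suppose $(z,v)^\top \in D(A)$ satisfies $A(z,v)^\top = \mu(z,v)^\top$. The second component yields $z = (\mu + \lambda)v$, and substituting into the first gives
\begin{equation*}
  \Delta\big((\eta(\mu+\lambda) + \kappa)\,v\big) \;=\; \mu(\mu+\lambda)\,v.
\end{equation*}
Expanding $v$ in the basis $\{\phi_k\}$ with $-\Delta\phi_k = \nu_k\phi_k$, the equation decouples mode by mode, so a nontrivial eigenfunction forces $v$ to be a scalar multiple of a single $\phi_k$, yielding the characteristic quadratic $\mu^2 + (\lambda + \eta\nu_k)\mu + \nu_k(\eta\lambda + \kappa) = 0$; its roots are exactly the claimed $\widetilde{\mu}_k^\pm$. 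The corresponding eigenfunction of $A$ is then $(1,\,1/(\widetilde{\mu}_k^\pm + \lambda))^\top \phi_k$, in direct analogy with \eqref{eq:seigfun-A}. Note also that $\mu=-\nu_0$ is excluded as an eigenvalue, since $\eta(\mu+\lambda)+\kappa=0$ forces $\mu(\mu+\lambda)v=0$ and hence $v=0$.

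For part (a), the discriminant $D(\nu_k) = \eta^2\nu_k^2 - 2(\eta\lambda + 2\kappa)\nu_k + \lambda^2$ is a quadratic in $\nu_k$ with positive leading coefficient, hence $D(\nu_k) < 0$ only on a bounded interval; since $\nu_k \to \infty$, only finitely many $k$ produce complex eigenvalues. For part (b), I would combine the product identity $\widetilde{\mu}_k^+ \widetilde{\mu}_k^- = \nu_k(\eta\lambda+\kappa)$ with a Taylor expansion of the square root for large $\nu_k$: the expansion gives $\widetilde{\mu}_k^- = -\eta\nu_k + O(1)$, from which $\widetilde{\mu}_k^+ = \nu_k(\eta\lambda+\kappa)/\widetilde{\mu}_k^- \to -(\eta\lambda+\kappa)/\eta = -\nu_0$.

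For part (c), negativity of the real parts is immediate: in the complex regime $\Re(\widetilde{\mu}_k^\pm) = -(\lambda+\eta\nu_k)/2 < 0$, and in the real regime both roots are negative because their sum is negative and their product is positive. To locate the thresholds $N_1, N_2$, I would evaluate $p(\mu) := \mu^2 + (\lambda+\eta\nu_k)\mu + \nu_k(\eta\lambda+\kappa)$ at $-\nu_0$: using $\nu_0 = \lambda + \kappa/\eta$, a direct computation yields $p(-\nu_0) = \nu_0\kappa/\eta > 0$, which forces both real roots to lie on the same side of $-\nu_0$. Since at $\nu_k = 0$ the roots are $0$ and $-\lambda$ (both above $-\nu_0$) while asymptotically by (b) they lie below $-\nu_0$, the crossing necessarily occurs inside the complex regime, where the common real part $-(\lambda+\eta\nu_k)/2$ is affine and monotone in $\nu_k$, crossing $-\nu_0$ precisely at $\nu_k = \lambda/\eta + 2\kappa/\eta^2$. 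The main technical care lies in organizing this case analysis across the low-$\nu_k$ real regime, the complex regime, and the high-$\nu_k$ real regime, and in tracking how $\Re(\widetilde{\mu}_k^\pm)$ moves past the accumulation level $-\nu_0$; once the identity $p(-\nu_0) = \nu_0\kappa/\eta > 0$ is noted, the existence of $N_1, N_2$ reduces to monotonicity plus the asymptotics from (b), and the remainder is bookkeeping fully parallel to the velocity case.
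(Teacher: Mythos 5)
Your proof is correct and follows exactly the route the paper intends: the paper states this theorem without proof (deferring to the velocity-case Theorem \ref{th:sspecAna} and to \cite{WKR}), and your mode-by-mode reduction of the eigenvalue problem to the quadratic $\mu^2+(\lambda+\eta\nu_k)\mu+\nu_k(\eta\lambda+\kappa)=0$ is precisely the computation behind the displayed eigenfunctions $\xi_k^{\pm}$ in \eqref{eq:eigfun-A}, including the correct exclusion of $\mu=-\nu_0$. The discriminant bound for (a), the expansion and product-of-roots identity for (b), and the key evaluation $p(-\nu_0)=\nu_0\kappa/\eta>0$ combined with the monotonicity of the vertex $-(\lambda+\eta\nu_k)/2$ for (c) together give a complete and correct verification.
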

Note that the set $\{\phi_k\}_{k\in \mathbb{N}}$ is an orthonormal basis of $L^2_\sharp(\mathbb{T}^2).$ With the help of this, one can easily show that the set $\left\lbrace \begin{pmatrix} \phi_k \\0 \end{pmatrix}, \begin{pmatrix} 0 \\ \phi_k \end{pmatrix}, \, k\in \mathbb{N} \right\rbrace $ is an orthonormal basis of $\mathcal{H}= L^2_\sharp(\mathbb{T}^2) \times L^2_\sharp(\mathbb{T}^2).$ Now, we choose eigenfunctions $\xi_k^\pm$ of $A$ corresponding to eigenvalues $\widetilde{\mu}_k^\pm$ as 
\begin{align} \label{eq:eigfun-A}
	\xi_k^\pm = \begin{pmatrix} 1 \\ \frac{1}{\widetilde{\mu}_k^\pm+\lambda} \end{pmatrix}\phi_k \text{ for all }k\in \mathbb{N}.
\end{align}
One can easily see that the set of eigenvalues of $A^*$ as $\left\lbrace \overline{\widetilde{\mu}_k^+}, \overline{\widetilde{\mu}_k^-}, \, |\, k \in \mathbb{N} \right\rbrace$. For all $k\in \mathbb{N},$ we calculate eigenfunctions $\xi_k^{*+}$ and $\xi_k^{*-}$ of $A^*$ corresponding to eigenvalues $\overline{\widetilde{\mu}_k^+}$ and $ \overline{\widetilde{\mu}_k^-},$ respectively as
\begin{align} \label{eq:eigfun-A*}
	\xi_k^{*+} =    \frac{\left(\lambda+ \overline{\widetilde{\mu}_k^+}\right)^2}{\left(\lambda+ \overline{\widetilde{\mu}_k^+}\right)^2-\kappa \nu_k}  \begin{pmatrix} 1 \\ \frac{-\kappa \nu_k}{\lambda+ \overline{\widetilde{\mu}_k^+}} \end{pmatrix} \phi_k \text{ and } \xi_k^{*-} =    \frac{\left(\lambda + \overline{\widetilde{\mu}_k^-}\right)^2}{\left(\lambda+ \overline{\widetilde{\mu}_k^-}\right)^2-\kappa \nu_k}  \begin{pmatrix} 1 \\ \frac{-\kappa \nu_k}{\lambda + \overline{\widetilde{\mu}_k^-}} \end{pmatrix} \phi_k.
\end{align}
This set of families of eigenfunctions of $A$ and $A^*$ satisfies the bi-orthonormality relations.

\begin{Proposition}
	For all $k\in \mathbb{N},$ let $\{ \xi_k^{+} , \xi_k^{-}\} $ and $\{\xi_k^{*+}, \xi_k^{*-}\}$ be sets of eigenfunctions of $A $ and $A^*,$ respectively, as defined in \eqref{eq:eigfun-A} and \eqref{eq:eigfun-A*}, respectively. Then the following bi-orthonormality relations hold:
	\begin{align*}
		& \left( \xi_k^+, \xi_n^{*+}\right)_{L^2_\sharp(\mathbb{T}^2)}  = \delta_k^n, \qquad \left( \xi_k^+, \xi_n^{*-}\right)_{L^2_\sharp(\mathbb{T}^2)} =0 \text{ for all }k, n \in \mathbb{N}, \\
		& \left( \xi_k^-, \xi_n^{*-}\right)_{L^2_\sharp(\mathbb{T}^2)}  = \delta_k^n, \qquad \left( \xi_k^-, \xi_n^{*+}\right)_{L^2_\sharp(\mathbb{T}^2)} =0 \text{ for all }k, n \in \mathbb{N},
	\end{align*}
	where $\delta_k^n$ is as in Proposition \ref{pps:NSEmBiorth}.
\end{Proposition}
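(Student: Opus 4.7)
The plan is to reduce everything to the orthonormality of $\{\phi_k\}$ in $L^2_\sharp(\mathbb{T}^2)$ together with a single algebraic identity coming from Vieta's formulas applied to the characteristic equation of $A$. The first observation is that every eigenfunction in \eqref{eq:eigfun-A} and \eqref{eq:eigfun-A*} factors as a constant $2$-vector times $\phi_k$; since the inner product on $\mathcal{H}$ is the sum of two $L^2_\sharp(\mathbb{T}^2)$ inner products, orthonormality of $\{\phi_k\}$ kills every off-diagonal ($k\neq n$) pairing for free. The problem therefore collapses to four diagonal identities.

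For the diagonal, I would fix $k$ and abbreviate $\mu^\pm := \widetilde{\mu}_k^\pm$, $\nu := \nu_k$. By Theorem \ref{th:specAna}, $\mu^\pm$ are the two roots of $\mu^2 + (\lambda+\eta\nu)\mu + \nu(\eta\lambda+\kappa) = 0$, so Vieta's formulas give $\mu^+ + \mu^- = -(\lambda+\eta\nu)$ and $\mu^+\mu^- = \nu(\eta\lambda+\kappa)$, whence
\begin{equation*}
(\mu^+ + \lambda)(\mu^- + \lambda) = \mu^+\mu^- + \lambda(\mu^+ + \mu^-) + \lambda^2 = \kappa\nu.
\end{equation*}
This is the key identity on which the whole calculation hinges, and it is what forces the cross terms to vanish.

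Next, taking the sesquilinear pairings on $\mathcal{H}$, I note that the scalar prefactor in $\xi_k^{*\pm}$ involves $\overline{\mu^\pm}$, and conjugation inside $(\cdot,\cdot)_{\mathcal{H}}$ converts it back to $\mu^\pm$ (the parameters $\lambda,\kappa,\nu$ being real). Direct substitution then reduces $(\xi_k^+,\xi_k^{*+})_{\mathcal{H}}$ to a ratio of the form $\frac{(\lambda+\mu^+)^2}{(\lambda+\mu^+)^2-\kappa\nu}\cdot\frac{(\lambda+\mu^+)^2-\kappa\nu}{(\lambda+\mu^+)^2}=1$, and similarly reduces $(\xi_k^+,\xi_k^{*-})_{\mathcal{H}}$ to something proportional to $(\lambda+\mu^+)(\lambda+\mu^-)-\kappa\nu$, which vanishes by the Vieta identity. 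Swapping the roles of $+$ and $-$ throughout yields the remaining two relations.

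There is no substantive obstacle; the only step that needs to be spotted is the identity $(\mu^++\lambda)(\mu^-+\lambda) = \kappa\nu_k$, which makes the cross terms vanish, together with the fact that the normalizing constant placed on $\xi_k^{*\pm}$ is precisely designed to cancel the factor $(\lambda+\mu^\pm)^2-\kappa\nu$ produced on the diagonal. Once these two observations are in hand, the computation is purely mechanical. If multiplicities occur (i.e.\ $\mu_k^+=\mu_k^-$ for some $k$, equivalently $(\lambda+\eta\nu_k)^2 = 4\nu_k(\eta\lambda+\kappa)$), the construction of the eigenfunction in \eqref{eq:eigfun-A} breaks down and one must pass to generalized eigenfunctions as indicated in the remark following Theorem \ref{thmspectrum}; the bi-orthonormality statement is then re-derived in the same spirit by direct computation.
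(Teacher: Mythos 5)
Your proposal is correct and matches the intended argument: the paper states this proposition without a written proof, deferring to the analogous direct computation in the cited reference, and that computation is exactly what you carry out — orthonormality of $\{\phi_k\}$ disposes of all $k\neq n$ pairings, the Vieta identity $(\widetilde{\mu}_k^++\lambda)(\widetilde{\mu}_k^-+\lambda)=\kappa\nu_k$ kills the diagonal cross terms, and the normalizing prefactor in $\xi_k^{*\pm}$ is chosen precisely to make the like-sign diagonal pairings equal to $1$. Your closing remark about repeated eigenvalues requiring generalized eigenfunctions is also consistent with the paper's own caveat following Theorem \ref{thmspectrum}.
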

Next, following \cite[Section 3.3]{WKR} and proceeding in same line, we conclude the following.
\begin{Theorem}
	The family of eigenfunctions of $A,$ $\{ \xi_k^+, \xi_k^- \, |\, k \in \mathbb{N} \},$ forms a Riesz basis in $\mathcal{H}.$ The same is also true for the family of eigenfunctions of $A^*,$ $\{ \xi_k^{*+}, \xi_k^{*-} \, |\, k \in \mathbb{N} \}.$ Furthermore, the spectrum of $A,$ denoted by $\sigma(A),$ is closure to the set of eigenvalues of $A,$ that is,
	\begin{align}
		\sigma(A):=\text{ closure of } \left\lbrace \widetilde{\mu}_k^+, \widetilde{\mu}_k^- \, |\, k\in \mathbb{N} \right\rbrace \text{ in } \mathbb{C} = \left\lbrace \widetilde{\mu}_k^+, \widetilde{\mu}_k^- \, |\, k\in \mathbb{N} \right\rbrace \cup \{-\nu_0\}.
	\end{align}
\end{Theorem}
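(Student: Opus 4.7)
The plan is to leverage the fact that $\{\phi_k\}_{k \in \mathbb{N}}$ forms an orthonormal basis of $L^2_\sharp(\mathbb{T}^2)$, so that $\left\lbrace \bigl(\begin{smallmatrix}\phi_k\\ 0\end{smallmatrix}\bigr), \bigl(\begin{smallmatrix}0\\ \phi_k\end{smallmatrix}\bigr) \right\rbrace_{k\in\mathbb{N}}$ is an orthonormal basis of $\mathcal{H}$, and to transfer this to the eigenfunctions $\xi_k^\pm$ via a uniformly bounded, mode-by-mode change of basis. The argument mirrors the velocity case (see Theorem \ref{thmspectrum}, Proposition \ref{pps:NSEmBiorth}, and the expansions in \eqref{eqn:full_expression}--\eqref{eqnormadj}), with the Stokes eigenfunctions $\Phi_k$ replaced by the Laplacian eigenfunctions $\phi_k$ on $\mathbb{T}^2$.

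First, for any $\boldsymbol{\zeta} \in \mathcal{H}$ I would write $\boldsymbol{\zeta} = \sum_{k} \bigl( a_k \bigl(\begin{smallmatrix}\phi_k\\ 0\end{smallmatrix}\bigr) + b_k \bigl(\begin{smallmatrix}0\\ \phi_k\end{smallmatrix}\bigr)\bigr)$ with $\|\boldsymbol{\zeta}\|_{\mathcal{H}}^2 = \sum_{k} (|a_k|^2 + |b_k|^2)$. Using the explicit formula \eqref{eq:eigfun-A} for $\xi_k^\pm$, the change of basis at mode $k$ from $\{(\phi_k,0),(0,\phi_k)\}$ to $\{\xi_k^+, \xi_k^-\}$ is implemented by
\begin{align*}
M_k \;=\; \begin{pmatrix} 1 & 1 \\ \dfrac{1}{\widetilde{\mu}_k^+ + \lambda} & \dfrac{1}{\widetilde{\mu}_k^- + \lambda} \end{pmatrix}.
\end{align*}
Using Vieta's identities $\widetilde{\mu}_k^+ + \widetilde{\mu}_k^- = -(\lambda+\eta\nu_k)$ and $\widetilde{\mu}_k^+ \widetilde{\mu}_k^- = \nu_k(\eta\lambda+\kappa)$, together with the asymptotics from Theorem \ref{th:specAna}(b), one finds $\widetilde{\mu}_k^+ + \lambda \to -\kappa/\eta$ and $\widetilde{\mu}_k^- + \lambda \sim -\eta\nu_k$, so $\det(M_k) \to \eta/\kappa \neq 0$. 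Thus $\|M_k\|$ and $\|M_k^{-1}\|$ are uniformly bounded for all but finitely many $k$; the finitely many exceptional modes (where the discriminant in Theorem \ref{th:specAna} vanishes and $\widetilde{\mu}_k^+ = \widetilde{\mu}_k^-$) are handled by passing to generalized eigenfunctions, as signalled in the remark following Proposition \ref{pps:NSEmBiorth}. Combining the uniform invertibility of $\{M_k\}_k$ with the bi-orthonormality relations of the preceding proposition, I obtain
\begin{align*}
\widetilde{C}_1 \sum_{k=1}^{\infty} \Bigl(|(\boldsymbol{\zeta}, \xi_k^{*+})_{\mathcal{H}}|^2 + |(\boldsymbol{\zeta}, \xi_k^{*-})_{\mathcal{H}}|^2\Bigr) \;\leq\; \|\boldsymbol{\zeta}\|_{\mathcal{H}}^2 \;\leq\; \widetilde{C}_2 \sum_{k=1}^{\infty} \Bigl(|(\boldsymbol{\zeta}, \xi_k^{*+})_{\mathcal{H}}|^2 + |(\boldsymbol{\zeta}, \xi_k^{*-})_{\mathcal{H}}|^2\Bigr),
\end{align*}
which is precisely the Riesz basis characterization of $\{\xi_k^+, \xi_k^-\}_k$ in $\mathcal{H}$. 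The same argument, applied to the adjoint eigenfunctions in \eqref{eq:eigfun-A*}, yields the companion Riesz basis statement for $A^*$.

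For the spectrum identification, the Riesz basis of eigenfunctions allows one to write the resolvent $R(\mu, A)$ as a block-diagonal operator with $2\times 2$ blocks on each mode, whose norms are controlled by the distance of $\mu$ to $\{\widetilde{\mu}_k^+, \widetilde{\mu}_k^-\}$. Invoking \cite[Remark 2.6.4, Chapter 2, Section 2.6]{Tucs}, exactly as used for Theorem \ref{thmspectrum}, one concludes that $\sigma(A)$ equals the closure in $\mathbb{C}$ of the point spectrum $\{\widetilde{\mu}_k^\pm\}$. By Theorem \ref{th:specAna}(b), $\widetilde{\mu}_k^+ \to -\nu_0$ while $\widetilde{\mu}_k^- \to -\infty$; hence the only finite accumulation point is $-\nu_0$, which gives the advertised identity $\sigma(A) = \{\widetilde{\mu}_k^+, \widetilde{\mu}_k^- \mid k\in \mathbb{N}\} \cup \{-\nu_0\}$.

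The main obstacle I anticipate is the uniform invertibility of $M_k$: one must rule out the coincidences $\widetilde{\mu}_k^+ = \widetilde{\mu}_k^-$ that can occur on the (finite) set of modes $k$ for which the discriminant $(\lambda+\eta\nu_k)^2 - 4\nu_k(\eta\lambda+\kappa)$ vanishes, and simultaneously track that $\det(M_k)$ stays bounded below as $k\to\infty$. Since the asymptotic behaviour of the $\widetilde{\mu}_k^\pm$ is already pinned down by Theorem \ref{th:specAna} and the degenerate modes are finite in number, this obstacle is bookkeeping rather than conceptual, and the generalized-eigenfunction adjustment carries through essentially as in the velocity argument of \cite[Section 3.3]{WKR}.
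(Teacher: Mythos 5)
Your proposal is correct and follows essentially the same route as the paper: the paper proves this appendix theorem by declaring it analogous to the velocity case (Theorem \ref{thmspectrum}), which itself rests on the bi-orthonormality relations, the norm equivalences of type \eqref{eqnorm}--\eqref{eqnormadj}, and the cited remark from \cite{Tucs}, exactly the ingredients you assemble. Your mode-by-mode matrices $M_k$, the Vieta/asymptotics check that $\det(M_k)\to\eta/\kappa$, and the generalized-eigenfunction fix for the finitely many degenerate modes simply make explicit the details the paper leaves to the reference \cite[Section 3.3]{WKR}.
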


\subsection{Stabilizability} 
We first aim to study the open loop stabilizability of \eqref{eq:linOp} with decay $\nu \in (0,\nu_0)$ and hence it is convenient to study the stabilizability of the shifted system:
\begin{equation} \label{eq:siftLinSysOP}
	\wt{X}'(t)=A_\nu \wt{X}(t) + B \wt{u}(t) \quad \text{ for }t>0, \qquad \wt{X}(0)=X_0,
\end{equation}
where $\wt{X}(t)=e^{\nu t}X(t),$ $\wt{u}(t)=e^{\nu t}u(t),$ and
\begin{align} \label{eqdef:A_lambda}
	A_\nu=A+\nu I, \text{ with } D(A_\nu)=D(A).
\end{align}
Note that the spectrum of $A_\nu,$ denoted by $\sigma(A_\nu)$ is 
\begin{align*}
	\sigma(A_\nu) = \{ \widetilde{\mu}_k^++\nu, \widetilde{\mu}_k^-+\nu\, |\, k \in \mathbb{N} \} \cup \{ \nu -\nu_0\}.   
\end{align*}
From Theorem \ref{th:specAna}, it follows that there exists $\tilde{N}_\nu\in \mathbb{N}$ such that
\begin{align} \label{eqref:SpecCond22}
	\Re(\widetilde{\mu}_k^\pm+\nu) <0 \text{ for all }k > \tilde{N}_\nu \text{ and } \Re(\widetilde{\mu}_k^\pm+\nu) \ge 0 \text{ for all }1\le k\le \tilde{N}_\nu .
\end{align}
Consequently, we define
\begin{align} \label{eq:specA_lam+-}
	\sigma_+(A_\nu)=\{\widetilde{\mu}_k^\pm +\nu \, |\, \Re(\widetilde{\mu}_k^\pm +\nu)\ge 0\} \text{ and } \sigma_-(A_\nu)=\{\widetilde{\mu}_k^\pm +\nu \, |\, \Re(\widetilde{\mu}_k^\pm +\nu) < 0\},
\end{align}
and note that $\sigma_+(A_\nu)$ is a finite set.
To obtain the feedback operator, consider the optimal control problem:
\begin{align}\label{eqoptinf_vort}
	\min_{\wt{u}\in E_{X_0}} J( \wt{X},\wt{u}) \text{ subject to \eqref{eq:siftLinSysOP}}, \text{ where } 	J( \wt{X},\wt{u}):=\int_0^\infty\big( \|\wt{X}(t)\|_{\mathcal{H}}^2 + \|\wt{u}(t)\|_{L^2(\T^2)}^2\big) \, dt,
\end{align}
with $E_{X_0}:=\{ \wt{u}\in L^2(0,\infty; L^2(\Omega))\mid \wt{X} \text{ satisfying \eqref{eq:siftLinSysOP} with control }\wt{u} \text{ such that } J( \wt{X},\wt{u}) <\infty\}.$
The next theorem yields the minimizer of \eqref{eqoptinf_vort} as well as the stabilizing control in the feedback form.  

\begin{Theorem}[stabilization for the vorticity case]\label{th:stb cnt}
	Let $\nu \in (0, \nu_0)$ be  any real number. Let $A_\nu$ (resp. $B$) be as defined in \eqref{eqdef:A_lambda} (resp. \eqref{eqdef:adj-B}). Then the following results hold: 
	\begin{enumerate}
		\item[(a)] There exists a unique operator $\normalfont P\in \mathcal{L}(\mathcal{H})$ that satisfies the non-degenerate Riccati equation 
		\begin{equation}\label{eqn:ARE}
			\begin{array}{l}
				A_\nu^*P+P A_\nu-P BB^*P+I=0,\quad P=P^* \geq 0 \text{ on }\mathcal{H}.
			\end{array}
		\end{equation}
		\item[(b)] For any $X_0\in \mathcal{H}$, there exists a unique optimal pair $\normalfont(X^\sharp,u^\sharp)$ for \eqref{eqoptinf_vort}, where for all $t>0$, $X^\sharp(t)$ satisfies the closed loop system 
		\begin{equation}\label{eqcl-loop}
			X{^\sharp}'(t)=(A_\nu-BB^*P)X^\sharp(t),\;\; X^\sharp(0)=X_0,
		\end{equation}
		$u^\sharp(t)$ can be expressed in the feedback form as
		\begin{equation}\label{eqoptcntrl}
			\normalfont u^\sharp(t)= F X^\sharp(t), \text{ with } F=-B^*P,
		\end{equation}
		and
		$\displaystyle\min_{\wt{u}\in E_{{X}_0}}\normalfont J( \wt{X},\wt{u})=J(X^\sharp,u^\sharp)=( P X_0,X_0).$
		\item[(c)] The feedback control in \eqref{eqoptcntrl} stabilizes \eqref{eq:siftLinSysOP}. In particular, let us denote the operator $A_{\nu,P}:=A_\nu-BB^*P,$ with $\normalfont D(A_{\nu,P})= D(A)$. The semigroup $\n \{e^{tA_{\nu,P}}\}_{t\ge 0}$, generated by $(A_{\nu,P}, \normalfont D(A_{\nu,P})),$ on $\n\mathcal{H}$ is analytic and exponentially stable, that is, there exist $\gamma>0$ and $M>0$ such that 
		$$\n \|e^{t A_{\nu,P}}\|_{\mathcal{L}(\mathcal{H})}\leq Me^{-\gamma t} \text{ for all }  t>0.$$
	\end{enumerate}
\end{Theorem}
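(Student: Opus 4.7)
The plan is to deduce this theorem by invoking the standard linear-quadratic regulator (LQR) theory for analytic semigroups, exactly in the spirit of Theorem \ref{th:stb cnt_vel} (which is the velocity-case analogue already proved via \cite{WKR}), after verifying the abstract hypotheses of the Riccati framework for the shifted pair $(A_\nu, B)$. Concretely, I would apply \cite[Proposition 3.3, Chapter 1, Part V]{BDDM} (or equivalently the framework in Barbu's book \cite{BarbuStabNSEBook}) by checking four ingredients: (i) $A_\nu$ generates an analytic $C_0$-semigroup on $\mathcal{H}$; (ii) only finitely many eigenvalues of $A_\nu$ have non-negative real part; (iii) the semigroup restricted to the stable invariant subspace decays exponentially; (iv) the Hautus-type unique continuation property holds.

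For (i), since $A$ generates an analytic semigroup on $\mathcal{H}$ and $\nu I$ is a bounded perturbation, $A_\nu = A + \nu I$ also generates an analytic semigroup by \cite[Theorem 12.37]{RROG}. For (ii), Theorem \ref{th:specAna} together with \eqref{eqref:SpecCond22} already gives that $\sigma_+(A_\nu)$ as defined in \eqref{eq:specA_lam+-} is a finite set. For (iii), the bound $\Re(\widetilde{\mu}_k^{\pm}+\nu) \to -(\nu_0-\nu) <0$ as $k\to \infty$, combined with a Riesz-basis argument analogous to \cite[Lemma 3.11]{WKRPCE}, yields $\epsilon>0$ and $C>0$ with $\|e^{tA_\nu}\pi_s\|_{\mathcal{L}(\mathcal{H})} \le C e^{-\epsilon t}$ for $t \ge 0$, where $\pi_s$ is the spectral projection onto the stable part.

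For the Hautus condition, I would mimic the proof of Proposition \ref{prop:stab by Hautus}. Let $\begin{pmatrix}\phi\\\psi\end{pmatrix}\in \text{Ker}(\mu I - A_\nu^*) \cap \text{Ker}(B^*)$ for $\mu \in \sigma_+(A_\nu)$. Being an eigenfunction of $A^*$ in our Riesz-basis setting, it must be a scalar multiple of some $\xi_k^{*+}$ or $\xi_k^{*-}$, namely $\begin{pmatrix}\phi\\\psi\end{pmatrix} = C_k \,(c_k^{\pm}\, \phi_k, d_k^{\pm}\, \phi_k)^\top$ for the constants $c_k^{\pm}, d_k^\pm$ read off from \eqref{eq:eigfun-A*}. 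Then $B^*\begin{pmatrix}\phi\\\psi\end{pmatrix} = C_k c_k^\pm\, \phi_k \chi_{\mathcal{O}} = 0$, which forces $\phi_k \equiv 0$ on the open set $\mathcal{O} \subset \mathbb{T}^2$. But the eigenfunctions $\phi_k(x) = \frac{1}{\pi} e^{i k \cdot x}$ are real-analytic and nowhere vanishing on $\mathbb{T}^2$; by the unique continuation principle this forces $C_k = 0$, establishing $\text{Ker}(\mu I - A_\nu^*) \cap \text{Ker}(B^*) = \{0\}$. This is the main step where there is something to check; it is straightforward here because the eigenfunctions are explicit Fourier modes rather than Stokes eigenfunctions.

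Once (i)--(iv) are in hand, the abstract Riccati theorem applies directly and produces the unique non-negative self-adjoint solution $P$ of \eqref{eqn:ARE} in part (a); the variational/dynamic programming characterization then gives part (b), including the optimal feedback form $u^\sharp = -B^*P X^\sharp$ and the formula $J(X^\sharp, u^\sharp) = (PX_0, X_0)$. For part (c), exponential stability of $\{e^{tA_{\nu,P}}\}_{t\ge 0}$ follows from the Riccati equation by the standard Lyapunov argument: computing $\frac{d}{dt}(P X^\sharp, X^\sharp)$ and using \eqref{eqn:ARE} yields $\|X^\sharp(t)\|_{\mathcal{H}}^2 \le C(\|X_0\|_\mathcal{H})e^{-\gamma t}$ for some $\gamma>0$; analyticity of the closed-loop semigroup follows because $BB^*P \in \mathcal{L}(\mathcal{H})$ is a bounded perturbation of the analytic generator $A_\nu$, again by \cite[Theorem 12.37]{RROG}. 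I do not expect any serious obstacle beyond the Hautus verification, and since the argument parallels the velocity case (Theorem \ref{th:stb cnt_vel}) line for line, the paper's remark that details are omitted seems fully justified.
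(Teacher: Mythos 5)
Your proposal follows essentially the same route as the paper: the paper proves the velocity-case analogue (Theorem \ref{th:stb cnt_vel}) by verifying the hypotheses of the abstract Riccati framework of \cite{BDDM}/\cite{WKR} --- analyticity of $e^{tA_\nu}$, finiteness of $\sigma_+(A_\nu)$, decay on the stable subspace, and the Hautus unique-continuation condition --- and states the vorticity case as the verbatim analogue, which is exactly what you carry out. The only cosmetic caveat is that $\ker(\mu I-A_\nu^*)$ may be multidimensional because $\nu_k=|k|^2$ has multiplicity, so the kernel element is a finite linear combination of the $\xi_{k_j}^{*\pm}$ rather than a single scalar multiple; since distinct Fourier modes are linearly independent on any open subset of $\mathbb{T}^2$, the conclusion $C_{k_j}=0$ still follows, matching the paper's own treatment of multiplicity in its remark on generalized eigenfunctions.
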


\begin{Remark}
	As stated in Remark \ref{rem:F-Inf-Feed-Velo}, here also the feedback operator $F$ can be finite- or infinite-dimensional. And, for both the case, rest of our analysis is applicable.
\end{Remark}

\subsection{Stabilizability of Full system}
%

In this section we study the stabilizability of non-linear system \eqref{eqn:non-linear intromainVort}. So we start with a regularity result for linear shifted closed loop system 
\begin{align*}
	\wt{X}'(t)= (A_\nu - B B^*P)\wt{X}(t) +  \begin{pmatrix} f(t) \\ 0 \end{pmatrix}\text{ for all } t>0, \quad \wt{X}(0)=X_0,
\end{align*}
for any forcing term $f\in L^2(0,\infty; L^2(\T^2)).$
Let us set $\widetilde{z}(t)=e^{\nu t}z(t), $ and $ \widetilde{v}(t)=e^{\nu t}v(t)$.

\begin{Theorem} \label{th:regresCls-vort}
	Let $\nu\in (0, \nu_0)$, where $\nu_0$ define in \eqref{eqlambda0}. For any $w_0\in H^1(\mathbb{T}^2)$ and any given $f\in L^2(0, \infty;L^2(\mathbb{T}^2))$ the close loop system 
	\begin{equation} \label{pNSEm}
		\left\{
		\begin{aligned}
			& \widetilde{z}_t - \eta \Delta\widetilde{z} - \kappa \Delta\widetilde{v}-\nu\widetilde{w} = \chi_{\mathcal{O}}F(\widetilde{z}, \widetilde{v})+ f \text{ in } \mathbb{T}^2\times (0,\infty), \\
			& \widetilde{v}_t+\lambda\widetilde{v} - \widetilde{z} -\nu\widetilde{v}= 0 \text{ in } \mathbb{T}^2\times (0,\infty), \\
			& \widetilde{z}(0)=z_0,  \quad \& \quad \widetilde{v}(0)=0\text{ in }\mathbb{T}^2.
		\end{aligned}\right.
	\end{equation}
	has a unique solution $(\widetilde{w}, \widetilde{v})$ such that $\widetilde{w}\in L^2(0, \infty; H^2(\mathbb{T}^2))\cap H^1(0, \infty; L^2_{\sharp}(\mathbb{T}^2))\cap C_b(0, \infty;H^1(\mathbb{T}^2)) $ and $\widetilde{v}\in H^1(0, \infty; H^2(\mathbb{T}^2))$
	satisfying the following estimate
	\begin{align}
		\|\widetilde{w}\|_{L^2(0, \infty; H^2(\mathbb{T}^2))}+\|\widetilde{w}\|_{H^1(0, \infty; L^2_{\sharp}(\mathbb{T}^2))}&+\|\widetilde{w}\|_{L^\infty(0, \infty;H^1(\mathbb{T}^2))}+\|\widetilde{v}\|_{H^1(0, \infty; H^2(\mathbb{T}^2))} \\\nonumber &\leq C(\|w_0\|_{H^1(\mathbb{T}^2)}+\|f\|_{L^2(0, \infty;L^2(\mathbb{T}^2))}),
	\end{align}
	for some positive constant $C$. Here, $F\in\mathcal{L}(\mathcal{H}, U)$ is the the stabilizing feedback operator as obtained in Theorem \ref{th:stb cnt}.
\end{Theorem}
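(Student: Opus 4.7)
The plan is to mimic the argument used for Theorem \ref{th:est-1} in the velocity setting, adapted to the periodic scalar vorticity case. First I would recast \eqref{pNSEm} as an abstract non-homogeneous Cauchy problem
\begin{equation}
\wt{X}'(t) = (A_\nu - B B^* P) \wt{X}(t) + \begin{pmatrix} f(t) \\ 0 \end{pmatrix}, \qquad \wt{X}(0) = \begin{pmatrix} z_0 \\ 0 \end{pmatrix},
\end{equation}
where $\wt{X}(t) = (\wt{z}(t), \wt{v}(t))^\top$. By Theorem \ref{th:stb cnt}(c), the operator $A_\nu - B B^* P$ generates an analytic semigroup of negative exponential type on $\mathcal{H}$, and since the forcing lies in $L^2(0,\infty;\mathcal{H})$, classical semigroup theory (\cite[Proposition 3.1, Ch.~1, Part~II]{BDDM}) produces a unique mild solution $\wt{X} \in L^2(0,\infty;\mathcal{H}) \cap C([0,\infty);\mathcal{H})$ satisfying $\|\wt{X}\|_{L^2(0,\infty;\mathcal{H})} \le C(\|z_0\|_{L^2_\sharp(\T^2)} + \|f\|_{L^2(0,\infty;L^2(\T^2))})$.

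To promote this low regularity to the one claimed in the theorem, I would introduce the decoupling variable $p := \wt{z} + \frac{\kappa}{\eta} \wt{v}$, mirroring the trick used in the velocity case. A direct combination of the two equations in \eqref{pNSEm} yields the scalar heat equation
\begin{equation}
p_t - \eta \Delta p = \Big(\nu + \frac{\kappa}{\eta}\Big) \wt{z} + \frac{\kappa}{\eta}(\nu - \lambda) \wt{v} + \chi_{\mathcal{O}} F(\wt{z}, \wt{v}) + f \text{ in } \T^2 \times (0,\infty),
\end{equation}
with periodic boundary condition and $p(\cdot,0) = z_0$. The right-hand side belongs to $L^2(0,\infty; L^2(\T^2))$ because $\wt{z}, \wt{v} \in L^2(0,\infty; L^2_\sharp(\T^2))$, $F \in \mathcal{L}(\mathcal{H}, L^2(\T^2))$, and $\chi_{\mathcal{O}}$ is a bounded multiplier. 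Combined with $z_0 \in H^1(\T^2)$ and the maximal regularity of the Laplacian on $\T^2$, this delivers $p \in L^2(0,\infty; H^2(\T^2)) \cap L^\infty(0,\infty; H^1(\T^2)) \cap H^1(0,\infty; L^2(\T^2))$ together with the bound $\|p\|_{(\cdot)} \le C(\|z_0\|_{H^1(\T^2)} + \|f\|_{L^2(0,\infty; L^2(\T^2))})$.

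Finally, substituting $\wt{z} = p - \frac{\kappa}{\eta} \wt{v}$ into the second equation of \eqref{pNSEm} gives the linear ODE
\begin{equation}
\wt{v}_t + \mu_0 \wt{v} = p, \qquad \wt{v}(\cdot,0) = 0, \qquad \mu_0 := \lambda + \frac{\kappa}{\eta} - \nu = \nu_0 - \nu,
\end{equation}
and since $\nu \in (0,\nu_0)$ ensures $\mu_0 > 0$, the representation $\wt{v}(x,t) = \int_0^t e^{-\mu_0 (t-s)} p(x,s)\, ds$ together with Young's convolution inequality in time and the fact that spatial derivatives commute with the convolution transfer the spatial $H^2$ regularity of $p$ to $\wt{v}$ and yield $\wt{v} \in H^1(0,\infty; H^2(\T^2))$ with the same type of bound. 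The identity $\wt{z} = p - \frac{\kappa}{\eta} \wt{v}$ then provides the required regularity and estimate for $\wt{z}$; uniqueness is immediate from the linearity of the problem and the semigroup representation already invoked.

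I expect the main point to verify carefully is the decoupling step: one must confirm that forming $p = \wt{z} + \frac{\kappa}{\eta}\wt{v}$ kills the cross term $-\kappa \Delta \wt{v}$ and leaves only lower-order couplings on the right, that all pieces of the right-hand side lie in $L^2(0,\infty; L^2(\T^2))$ (the forcing from $F$ is the non-trivial one, handled by $F \in \mathcal{L}(\mathcal{H}, L^2(\T^2))$), and that the mean-zero structure of $L^2_\sharp(\T^2)$ is preserved under all operations. Everything else is standard maximal parabolic regularity on the torus together with the elementary exponential ODE estimate.
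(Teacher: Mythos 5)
Your proposal is correct and follows essentially the same route as the paper: the authors simply state that the proof ``follows exactly the same line as Theorem \ref{th:est-1},'' and your argument is precisely that line adapted to the periodic scalar setting --- abstract closed-loop Cauchy problem via Theorem \ref{th:stb cnt}(c), the decoupling variable $p=\widetilde{z}+\tfrac{\kappa}{\eta}\widetilde{v}$ with maximal parabolic regularity for the heat equation on $\mathbb{T}^2$, the exponential ODE representation for $\widetilde{v}$ using $\nu_0-\nu>0$, and recovery of $\widetilde{z}$. The decoupling computation you flag as the main point to verify does work out exactly as in the velocity case.
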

\begin{proof}
	The proof follows exactly the same line as Theorem \ref{th:est-1}.
\end{proof}

For any $0< \nu< \lambda<\nu_0$, we study the stability of the following non-linear system
\begin{equation} \label{eqn:non-linear main}
	\begin{aligned}
		& \widetilde{z}_t(\cdot, t) -\eta \Delta \widetilde{z}(\cdot, t)-\kappa\Delta \widetilde{v}(\cdot,t)-\nu \widetilde{z}(\cdot, t) -\frac{\kappa}{\lambda}e^{-(\lambda-\nu) t}\Delta w_\infty +e^{-\nu t}(\boldsymbol{k}*\wt{z})\cdot \nabla \wt{z}\\
		& \qquad \qquad  + (\boldsymbol{k}*\wt{z})\cdot \nabla w_\infty + (\boldsymbol{k}*w_\infty)\cdot \nabla \wt{z}=\chi_{\mathcal{O}}(\cdot) F(\widetilde{z}(\cdot, t), \widetilde{v}(\cdot, t)) \text{ in }\T^2 \times (0,\infty),\\
		& \widetilde{v}_t(\cdot, t) +\lambda\widetilde{v}(\cdot, t)-\widetilde{z}(\cdot,t)-\nu \widetilde{v}(\cdot,t)=0 \text{ in }Q\\
		& \widetilde{z}(x,0)=z_0(x), \; \widetilde{v}(0,x)=0 \quad \text{ in }\mathbb{T}^2.
	\end{aligned}
\end{equation}
We prove the stabilization result for \eqref{eqn:non-linear main} using the Banach fixed theorem. To do that, we first set the space:
\begin{equation}\label{eqspace1}
	\begin{array}{l}
		D=\Big\lbrace \widetilde{z}\in L^2(0,\infty;  H^2(\mathbb{T}^2))\cap C_b([0,\infty); H^1(\mathbb{T}^2))\cap H^1(0,\infty;L^2_{\sharp}(\Omega)), \\[2.mm]
		\mbox{with norm} \quad 
		\|\widetilde{z}\|^2_D= \|\widetilde{z}\|^2_{L^2(0,\infty; H^2(\mathbb{T}^2))}+\|\widetilde{z}\|^2_{L^\infty(0,\infty; H^1(\mathbb{T}^2))}
		+\|\widetilde{z}\|^2_{H^1(0,\infty;L^2_{\sharp}(\mathbb{T}^2))} \Big \rbrace.
	\end{array}
\end{equation}
For any $\rho>0$, we define
{\small
	\begin{equation}\label{eqspace2}
		\begin{array}{l}
			D_\rho=\{\widetilde{z}\in D\mid \|\widetilde{z}\|_D \le \rho\}, \text{ and }
			\mathcal{B}_\rho= \{\widetilde{v}\in H^1(0,\infty;  H^2(\mathbb{T}^2))\mid \|\widetilde{v}\|_{H^1(0,\infty; H^2(\mathbb{T}^2))}\le \rho\}.
		\end{array}
\end{equation}}
Next, we set 
\begin{equation}\label{eqNL1}
	g(\psi)(x,t)=\frac{\kappa}{\lambda}e^{-(\lambda-\nu) t}\Delta w_\infty - e^{-\nu t}(\boldsymbol{k}*\psi)\cdot \nabla \psi -(\boldsymbol{k}*\psi)\cdot \nabla w_\infty - (\boldsymbol{k}*w_\infty)\cdot \nabla \psi, 
\end{equation} 
for all $(x,t)\in \Omega \times (0,\infty).$ To  apply the fixed point theorem, we need to obtain some suitable estimates of the nonlinear term $g(\psi)$ appearing in \eqref{eqn:non-linear main}. 

\begin{Lemma} \label{lem:est}
	Let $\nu\in (0,\lambda)$ and let $g$ be as introduced in \eqref{eqNL1}. 
	There exists a positive constant $M_3$ depending on $\Omega$, such that for all $\psi, \psi^1, \psi^2\in D$, 
	{\small
		\begin{equation}\label{eqNL2}
			\begin{aligned}
				& (a) \quad \|g(\psi)\|_{L^2(0,\infty; L^2(\mathbb{T}^2))}\leq M_3\left( \|\psi\|^2_D + \|w_\infty\|_{H^1(\T^2)}^2 + \|w_\infty\|_{H^2(\T^2)}  \right),\\
				& (b) \quad \|g(\psi^1)-g(\psi^2)\|_{L^2(0,\infty; L^2_(\mathbb{T}^2))}\leq M_3 \Big(\|\psi^1\|_D+\|\psi^2\|_D + \|w_\infty\|_{H^1(\T^2)}\Big)\|\psi^1-\psi^2\|_D.
			\end{aligned}
	\end{equation}}
\end{Lemma}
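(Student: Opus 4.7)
The approach will mirror the corresponding velocity-case argument in Lemma \ref{vlem:est}, with the essential new ingredient being that velocities are recovered from vorticity through the Biot--Savart convolution \eqref{VV}, so the convective terms must be estimated through \eqref{eq:Linf-LpestT2} rather than directly. I would estimate each of the four summands in $g(\psi)$ separately in $L^2(0,\infty;L^2(\T^2))$, then combine via the triangle inequality and, where necessary, the AM--GM inequality, to reach the form on the right-hand side of \eqref{eqNL2}. The workhorses will be (i) the pointwise H\"older inequality $\|u\cdot\nabla v\|_{L^2(\T^2)}\le\|u\|_{L^\infty(\T^2)}\|\nabla v\|_{L^2(\T^2)}$, (ii) the Biot--Savart $L^\infty$ bound $\|\boldsymbol{k}\ast\psi\|_{L^\infty(\T^2)}\le C_p\|\psi\|_{L^p(\T^2)}$ for some fixed $p>2$, and (iii) the Sobolev embedding $H^1(\T^2)\hookrightarrow L^p(\T^2)$ for all finite $p$.

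For part (a), the plan is to handle the four terms in the following order. The pure source $\frac{\kappa}{\lambda}e^{-(\lambda-\nu)t}\Delta w_\infty$ is the simplest: since $\nu<\lambda$, direct time integration yields $C\|w_\infty\|_{H^2(\T^2)}$. For the cross term $(\boldsymbol{k}\ast\psi)\cdot\nabla w_\infty$, the pointwise estimate $\|\boldsymbol{k}\ast\psi\|_{L^\infty}\|\nabla w_\infty\|_{L^2}\le C\|\psi\|_{H^1}\|w_\infty\|_{H^1}$ is integrated in time using the inclusion $L^2(0,\infty;H^2(\T^2))\hookrightarrow L^2(0,\infty;H^1(\T^2))$ encoded in the norm of $D$, giving $C\|\psi\|_D\|w_\infty\|_{H^1}$. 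The symmetric term $(\boldsymbol{k}\ast w_\infty)\cdot\nabla\psi$ is treated the same way, now placing $\boldsymbol{k}\ast w_\infty$ into $L^\infty$ (constant in $t$) and $\nabla\psi$ into $L^2_t L^2_x$. Finally, for the genuinely quadratic term $e^{-\nu t}(\boldsymbol{k}\ast\psi)\cdot\nabla\psi$, H\"older and \eqref{eq:Linf-LpestT2} give the pointwise bound $C\|\psi\|_{H^1}^2$, and multiplying by $e^{-\nu t}$ and integrating produces
\[
\Bigl(\int_0^\infty e^{-2\nu t}\|\psi(t)\|_{H^1}^4\,dt\Bigr)^{1/2}\le \tfrac{1}{\sqrt{2\nu}}\,\|\psi\|_{L^\infty(0,\infty;H^1)}^2\le C\|\psi\|_D^2.
\]
Adding these bounds and using AM--GM to absorb mixed products $\|\psi\|_D\|w_\infty\|_{H^1}$ into $\|\psi\|_D^2+\|w_\infty\|_{H^1}^2$ yields \eqref{eqNL2}(a).

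For part (b), the $w_\infty$ source cancels, so only the convective pieces remain. The bilinear difference is split in the standard way by adding and subtracting $(\boldsymbol{k}\ast\psi^2)\cdot\nabla\psi^1$:
\[
(\boldsymbol{k}\ast\psi^1)\cdot\nabla\psi^1-(\boldsymbol{k}\ast\psi^2)\cdot\nabla\psi^2=(\boldsymbol{k}\ast(\psi^1-\psi^2))\cdot\nabla\psi^1+(\boldsymbol{k}\ast\psi^2)\cdot\nabla(\psi^1-\psi^2),
\]
and each piece is estimated exactly as the quadratic term in (a), producing factors $\|\psi^1\|_D$ and $\|\psi^2\|_D$ respectively, multiplied by $\|\psi^1-\psi^2\|_D$. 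The remaining linear-in-$\psi$ pieces $(\boldsymbol{k}\ast(\psi^1-\psi^2))\cdot\nabla w_\infty$ and $(\boldsymbol{k}\ast w_\infty)\cdot\nabla(\psi^1-\psi^2)$ reduce verbatim to the cross-term estimates in (a) with $\psi$ replaced by $\psi^1-\psi^2$, giving $C\|w_\infty\|_{H^1}\|\psi^1-\psi^2\|_D$. Summing the three contributions produces \eqref{eqNL2}(b).

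The main technical subtlety, inherited from the velocity analysis, is to keep track of which component of $\|\psi\|_D^2 = \|\psi\|_{L^2(H^2)}^2+\|\psi\|_{L^\infty(H^1)}^2+\|\psi\|_{H^1(L^2)}^2$ is used at each step, so that the right-hand side ends up linear in $\|\psi\|_D^2$ and not in $\|\psi\|_D^4$. For the purely quadratic term this is reconciled thanks to the exponential weight $e^{-\nu t}$, which allows both factors to sit in $L^\infty_t H^1_x$; for the cross terms the $L^2_t$ integrability of $\psi$ in $H^1$ (coming from the stronger $L^2_t H^2_x$ control in $D$) compensates for the absence of any exponential weight on $w_\infty$. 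Because $\T^2$ has no boundary, no boundary contributions arise and standard periodic Sobolev/Poincar\'e inequalities are directly applicable.
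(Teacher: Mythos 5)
Your proposal is correct and follows essentially the same route as the paper: term-by-term $L^2_tL^2_x$ estimates via H\"older, the Biot--Savart bound \eqref{eq:Linf-LpestT2} combined with $H^1(\T^2)\hookrightarrow L^p(\T^2)$, the standard add-and-subtract splitting of the bilinear difference for part (b), and AM--GM to reach the stated right-hand side. The only (immaterial) deviation is in the quadratic term, where you use the weight $e^{-2\nu t}$ to place both factors in $L^\infty_tH^1_x$, whereas the paper simply drops the weight and pairs $\|\psi\|_{L^\infty(0,\infty;H^1)}$ with $\|\psi\|_{L^2(0,\infty;H^1)}$; both yield the same bound $C\|\psi\|_D^2$.
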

\begin{proof}
	(a) Using inequality \eqref{eq:Linf-LpestT2}, we obtain
	\begin{align}
		\|e^{-\nu t}(\boldsymbol{k}*\psi)\cdot \nabla \psi\|^2_{L^2(0,\infty; L^2(\mathbb{T}^2))}\leq & \int_0^\infty e^{-2\nu t}\|\boldsymbol{k}*\psi(t)\|^2_{L^\infty(\Omega)}\|\nabla\psi(t)\|^2_{{L}^2(\mathbb{T}^2)} dt\no\\ \leq & C_p\int_0^\infty e^{-2\nu t}\|\psi\|^2_{L^p(\T^2)} \|\nabla\psi(t)\|^2_{L^2(\mathbb{T}^2)} dt\no\\ \leq & C\|\psi\|^2_{L^\infty(0, \infty, H^1(\mathbb{T}^2))} \|\psi\|^2_{L^2(0, \infty, H^1(\mathbb{T}^2))}.
	\end{align}
	Further, we also have
	\begin{equation}
		\begin{aligned}
			& \|(\boldsymbol{k}*\psi)\cdot \nabla w_\infty\|^2_{L^2(0,\infty; L^2(\T^2))}  \le \int_0^\infty  \|\boldsymbol{k}*\psi(t)\|^2_{L^\infty(\Omega)}\|\nabla w_\infty\|^2_{{L}^2(\mathbb{T}^2)} dt\no\\ 
			& \qquad \quad \qquad \leq C_p\int_0^\infty \|\psi\|^2_{L^p(\T^2)} \|\nabla w_\infty\|^2_{L^2(\mathbb{T}^2)} dt \leq  C\|\nabla w_\infty\|^2_{L^2(\mathbb{T}^2)}\|\psi\|^2_{L^2(0, \infty, H^1(\mathbb{T}^2))},\\
			& \|(\boldsymbol{k}*w_\infty)\cdot \nabla \psi\|^2_{L^2(0,\infty; L^2(\T^2))} 
			\leq  C_p\|\nabla w_\infty\|^2\|\psi\|^2_{L^2(0, \infty, H^1(\mathbb{T}^2))}.
		\end{aligned}
	\end{equation}
	Then, adding the above inequalities the first estimate in Lemma \ref{lem:est} is proved.	
	
	\noindent (b) For the Lipschitz  estimate, note that for all $\psi^1, \psi^2\in D$, 
	\begin{align*}
		&\int_0^\infty e^{-2\nu t}\int_{\mathbb{T}^2}|(k\ast\psi_1)\cdot\nabla\psi_1-(k\ast\psi_2)\cdot\nabla\psi_2|^2\no\\
		&\leq  \int_0^\infty e^{-2\nu t}\int_{\mathbb{T}^2}(|k\ast(\psi_1-\psi_2)|^2|\nabla\psi_1|^2+|k\ast\psi_2|^2|\nabla(\psi_1-\psi_2)|^2)\no\\
		&\leq \int_0^\infty \|k\ast(\psi_1-\psi_2)(t)\|^2_{L^\infty(\mathbb{T}^2)}\|\nabla\psi_1(t)\|^2_{L^2(\mathbb{T}^2)}+\|k\ast\psi_2(t)\|^2_{L^\infty(\mathbb{T}^2)}\|\nabla(\psi_1-\psi_2)(t)\|^2_{L^2(\mathbb{T}^2)}\no\\
		&\leq  C_p\int_0^\infty e^{-2\nu t}\|\psi_1-\psi_2\|^2_{L^p(\mathbb{T}^2)}\|\nabla\psi_1\|^2_{L^2(\mathbb{T}^2)}+\|\psi_2\|^2_{L^p(\mathbb{T}^2)}\|\nabla(\psi_1-\psi_2)\|^2_{L^2(\mathbb{T}^2)}\no\\
		&\leq  C \Big( \|\psi_1\|_{L^\infty(0,\infty; H^1(\T^2))}^2 + \|\psi_2\|_{L^\infty(0,\infty; H^1(\T^2))}^2 \Big) \|\psi_1-\psi_2\|^2_D.
	\end{align*}
	Similarly, we find
	\begin{align*}
		\|(\boldsymbol{k}*\psi_1)\cdot  \nabla w_\infty - (\boldsymbol{k}*\psi_2)\cdot \nabla w_\infty\|^2_{L^2(0,\infty; L^2(\T^2))} 
		& \le \int_0^\infty  \|\boldsymbol{k}*(\psi_1(t)-\psi_2(t))\|^2_{L^\infty(\Omega)}\|\nabla w_\infty\|^2_{{L}^2(\mathbb{T}^2)} dt\no\\ &\leq  C_p\int_0^\infty \|\psi_1-\psi_2\|^2_{L^p(\T^2)} \|\nabla w_\infty\|^2_{L^2(\mathbb{T}^2)} dt \\
		&\leq  C\|\nabla w_\infty\|_{L^2(\T^2)}^2\|\psi_1-\psi_2\|^2_{L^2(0, \infty, H^1(\mathbb{T}^2))},\\
		\|(\boldsymbol{k}*w_\infty)\cdot \nabla \psi_1 - (\boldsymbol{k}*w_\infty)\cdot \nabla \psi_2\|^2_{L^2(0,\infty; L^2(\T^2))} 
		&\leq  C\|\nabla w_\infty\|^2\|\psi_1-\psi_2\|^2_{L^2(0, \infty, H^1(\mathbb{T}^2))}.
	\end{align*}
	Combining all the above estimates, we obtain the second estimate in Lemma \ref{lem:est}.
\end{proof}

Now, we are in a position to get a stabilization result. To obtain that we follow the the technique used in Theorems \ref{th:vstab non lin} - \ref{th:mainresultStabIntegral}. In fact, since we have the regularity result in Theorem \ref{th:regresCls-vort} and Lemma \ref{lem:est}, one can mimic the calculations done in Theorems \ref{th:vstab non lin} - \ref{th:mainresultStabIntegral} and establish the stabilization results on the vorticity equation. Hence, we just state our results without proof.

\begin{Theorem}\label{th:stab non lin}
	Let $\nu$ belong to $(0,\lambda)$. There exist a continuous linear  operator  
	$F\in \mathcal{L} \Big(L^2(\T^2)\times L^2(\T^2) ; L^2(\T^2)\Big),$ and positive constants 
	$\rho_0$ and $M$ depending on $\nu$, $\lambda$, $\kappa$, such that, for all $0<\rho\le \rho_0$ and for all $z_0\in H^1(\mathbb{T}^2)$ and stationary solution $w_\infty \in H^2(\T^2)$ of \eqref{eq:stdNSEVort} satisfying
	$$\|z_0\|_{H^1(\mathbb{T}^2)}\leq M\rho, \text{ and } \, \|w_\infty\|_{H^2(\T^2)}\le M\rho,$$ 
	the non-linear closed loop system 
	\begin{equation} \label{eqn:non-linear mainClsVort}
		\begin{aligned}
			& z_t(\cdot, t) -\eta \Delta z(\cdot, t)-\kappa\Delta v(\cdot,t) -\frac{\kappa}{\lambda}e^{-\lambda t}\Delta w_\infty  +e^{-\nu t}(\boldsymbol{k}*z)\cdot \nabla z + (\boldsymbol{k}*z)\cdot \nabla w_\infty\\
			& \qquad \qquad + (\boldsymbol{k}*w_\infty)\cdot \nabla z=\chi_{\mathcal{O}}(\cdot) F(z(\cdot, t), v(\cdot, t)) \text{ in }\Omega\times (0,\infty),\\
			& v_t(\cdot, t) +\lambda v(\cdot, t)-z(\cdot,t)=0 \text{ in }Q\\
			& z(x,0)=z_0(x), \; v(0,x)=0 \quad \text{ in }\mathbb{T}^2.
		\end{aligned}
	\end{equation}
	admits a unique solution 
	{\small
		$$(w,v)\in \Big(L^2(0, \infty; H^2(\mathbb{T}^2))\cap H^1(0, \infty; L^2_{\sharp}(\mathbb{T}^2))\cap C_b(0, \infty;H^1(\mathbb{T}^2))\Big)\times H^1(0, \infty; H^2(\mathbb{T}^2))$$}  satisfying
	{\small
		$$ \|e^{\nu\cdot}z\|^2_{L^2(0,\infty;H^2(\mathbb{T}^2))}+\|e^{\nu\cdot}z\|^2_{L^\infty(0,\infty; H^1(\mathbb{T}^2))}+\|e^{\nu\cdot}z\|^2_{H^1(0,\infty; L^2(\mathbb{T}^2))}+\|e^{\nu\cdot}v\|^2_{H^1(0,\infty; H^2(\mathbb{T}^2))}\le 2\rho^2.$$ }
	Moreover, $(z,v)$ satisfies 
	$$ \|(z(\cdot,t), v(\cdot,t))\|_{H^1(\mathbb{T}^2)\times H^2(\mathbb{T}^2)} \le C e^{-\nu t} \left( \|z_0\|_{H^1(\mathbb{T}^2)} + \|w_\infty\|_{H^1(\T^2)}^2 + \|w_\infty\|_{H^2(\T^2)}\right),$$
	for all $t>0,$ for some positive constant $C$ independent of initial conditions and $t$. 
\end{Theorem}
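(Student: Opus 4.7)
The plan is to mirror the argument of Theorem \ref{th:vstab non lin}, replacing the Stokes based functional framework by the periodic scalar framework developed in this appendix, and applying the Banach fixed point theorem on the space $D_\rho$ defined in \eqref{eqspace1}--\eqref{eqspace2}. First, I would pass to the shifted system by setting $\widetilde{z}(t)=e^{\nu t}z(t)$ and $\widetilde{v}(t)=e^{\nu t}v(t)$, so that \eqref{eqn:non-linear mainClsVort} becomes \eqref{eqn:non-linear main} with forcing $g(\widetilde{z})$ from \eqref{eqNL1}; once a solution $(\widetilde{z},\widetilde{v})$ of the shifted system is obtained in $D_\rho\times\mathcal{B}_\rho$, the desired solution of \eqref{eqn:non-linear mainClsVort} and the exponential decay estimate follow immediately by reversing the shift.

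Next, for a fixed $\psi\in D_\rho$, I would consider the linear closed loop shifted problem
\begin{equation*}
\widetilde{z}_{\psi,t}-\eta\Delta\widetilde{z}_\psi-\kappa\Delta\widetilde{v}_\psi-\nu\widetilde{z}_\psi=\chi_{\mathcal{O}}F(\widetilde{z}_\psi,\widetilde{v}_\psi)+g(\psi),\qquad \widetilde{v}_{\psi,t}+\lambda\widetilde{v}_\psi-\widetilde{z}_\psi-\nu\widetilde{v}_\psi=0,
\end{equation*}
with initial data $(z_0,0)$. Since $\psi\in D\subset L^\infty(0,\infty;H^1(\T^2))\cap L^2(0,\infty;H^2(\T^2))$, Lemma \ref{lem:est}(a) gives $g(\psi)\in L^2(0,\infty;L^2(\T^2))$, so Theorem \ref{th:regresCls-vort} produces a unique solution $(\widetilde{z}_\psi,\widetilde{v}_\psi)\in D\times H^1(0,\infty;H^2(\T^2))$ satisfying
\begin{equation*}
\|\widetilde{z}_\psi\|_D+\|\widetilde{v}_\psi\|_{H^1(0,\infty;H^2(\T^2))}\le C\bigl(\|z_0\|_{H^1(\T^2)}+M_3(\|\psi\|_D^2+\|w_\infty\|_{H^1(\T^2)}^2+\|w_\infty\|_{H^2(\T^2)})\bigr).
\end{equation*}
Choosing $\rho_0$ small and $M=M(\rho_0)$ small exactly as in the bounds \eqref{eq:Vel-rho-cndtn}--\eqref{eq:Vel-selfmap-cndtn}, this bound gives $\|\widetilde{z}_\psi\|_D+\|\widetilde{v}_\psi\|_{H^1(0,\infty;H^2(\T^2))}\le\rho$, so the map $\mathbb{S}\colon\psi\mapsto\widetilde{z}_\psi$ sends $D_\rho$ into itself.

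For contractivity, I would subtract the equations for two sources $\psi^1,\psi^2\in D_\rho$, apply Theorem \ref{th:regresCls-vort} to the difference (using the linearity of $F$ in the same way as in the velocity case), and control the right hand side via Lemma \ref{lem:est}(b) to obtain
\begin{equation*}
\|\mathbb{S}(\psi^1)-\mathbb{S}(\psi^2)\|_D\le C\,M_3\bigl(\|\psi^1\|_D+\|\psi^2\|_D+\|w_\infty\|_{H^1(\T^2)}\bigr)\|\psi^1-\psi^2\|_D.
\end{equation*}
Shrinking $\rho_0$ and $M$ so that the prefactor is strictly less than one, Banach's fixed point theorem yields a unique fixed point $\widetilde{z}\in D_\rho$ and, with the associated $\widetilde{v}$, the required solution of the shifted system with the norm bound $\le 2\rho^2$. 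Undoing the shift $z=e^{-\nu t}\widetilde{z}$, $v=e^{-\nu t}\widetilde{v}$ gives \eqref{eqn:non-linear mainClsVort}, and the pointwise exponential decay estimate on $\|(z(\cdot,t),v(\cdot,t))\|_{H^1(\T^2)\times H^2(\T^2)}$ follows by reinserting $\psi=\widetilde{z}$ into the linear estimate of Theorem \ref{th:regresCls-vort} and using the $H^1$ embedding into $C_b([0,\infty);H^1(\T^2))$ together with $\widetilde{v}\in H^1(0,\infty;H^2(\T^2))\hookrightarrow C_b([0,\infty);H^2(\T^2))$.

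The main technical point, which is also the main obstacle, is the simultaneous control of all three nonlinearities $(\boldsymbol{k}\ast\psi)\cdot\nabla\psi$, $(\boldsymbol{k}\ast\psi)\cdot\nabla w_\infty$ and $(\boldsymbol{k}\ast w_\infty)\cdot\nabla\psi$ in the $L^2(0,\infty;L^2(\T^2))$ norm: the Biot--Savart estimate \eqref{eq:Linf-LpestT2} and the regularity $w_\infty\in H^2(\T^2)$ are exactly what make Lemma \ref{lem:est} work, and the choice $\nu<\lambda$ is what makes the exponential factor $e^{-(\lambda-\nu)t}\Delta w_\infty$ square integrable in time, which explains the restriction $\nu\in(0,\lambda)$ appearing in the statement (see also Remark \ref{rem:decayRate-NSE}). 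All the remaining steps are routine and parallel to Theorem \ref{th:vstab non lin}.
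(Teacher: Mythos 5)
Your proposal is correct and follows exactly the route the paper intends: the paper omits the proof of this theorem, stating only that one should mimic the fixed-point argument of Theorems \ref{th:vstab non lin}--\ref{th:mainresultStabIntegral} using the regularity result of Theorem \ref{th:regresCls-vort} and the nonlinear estimates of Lemma \ref{lem:est}, and your shift--freeze--self-map--contraction scheme is precisely that argument transplanted to the periodic scalar setting.
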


In the next theorem, we state the stabilizability result on the vorticity equation \eqref{eq:NSEMem_vort}. 

\begin{Theorem} \label{th:stab non linIntgrl}
	Let $\nu \in (0,\lambda)$ be any given number. Then there exist linear continuous operator $$ F\in \mathcal{L}(L^2(0,\infty;L^2(\T^2));L^2(\T^2)),$$ $\rho_0>0, M>0,$ and $\wt{M}>0$ depending on $\eta, \kappa, \lambda,$ such that for all $0<\rho\le \rho_0,$ $w_0 \in H^1(\T^2)$ and solution $w_\infty\in H^2(\T^2)$ of \eqref{eq:stdNSEVort} satisfying 
	\begin{align}
		\|w_0 -w_\infty\|_{H^1(\T^2)}\le M\rho \text{ and } \|w_\infty\|_{H^2(\T^2)}\le M\rho,
	\end{align}
	there exists a unique solution $w$ of
	{\small 
		\begin{equation*} 
			\left\{
			\begin{aligned}
				& w_t - \eta \Delta w + (\boldsymbol{k}*w)\cdot \nabla w - \kappa \int_0^t e^{-\lambda(t-s)}\Delta w(s)ds = f_\infty + \chi_{\mathcal{O}} \boldsymbol{F}(w) \text{ in } \mathbb{T}^2\times (0,\infty), \\
				& w(0)=w_0 \text{ in }\mathbb{T}^2,
			\end{aligned}\right.
	\end{equation*}}
	such that 
	\begin{align*}
		& \|w(\cdot) -w_\infty\|_{H^1(\T^2)}  + \left\| \int_0^t e^{-\lambda (t-s)} \left( w(s)  - w_\infty \right)ds\right\|_{H^2(\T^2)} \\
		& \qquad \qquad \le \wt{M} e^{-\nu t} \left(\|w_0-w_\infty\|_{H^1(\T^2)} + \|w_\infty\|_{H^1(\T^2)}^2 + \|w_\infty\|_{H^2(\T^2)}\right) \text{ for all }t>0.
	\end{align*}
\end{Theorem}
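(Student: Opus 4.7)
The plan is to reduce Theorem \ref{th:stab non linIntgrl} to the already established Theorem \ref{th:stab non lin} on the coupled (non-integral) vorticity system, exactly in the spirit of how Theorem \ref{th:mainresultStabIntegral} was derived from Theorem \ref{th:vstab non lin}. First I would define the feedback operator $\boldsymbol{F}$ acting on the original integral system by
\begin{equation*}
   \boldsymbol{F}w(\cdot,t) := F\!\left(w(\cdot,t)-w_\infty,\ \int_0^t e^{-\lambda(t-s)}\bigl(w(\cdot,s)-w_\infty\bigr)\,ds\right),
\end{equation*}
where $F\in\mathcal{L}(\mathcal{H},L^2(\T^2))$ is the feedback operator obtained in Theorem \ref{th:stb cnt}(c) for the principal shifted system and used in Theorem \ref{th:stab non lin}. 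Clearly $\boldsymbol{F}\in\mathcal{L}(L^2(0,\infty;L^2(\T^2));L^2(\T^2))$.

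Next, setting $z:=w-w_\infty$ and introducing the auxiliary variable $v(\cdot,t):=\int_0^t e^{-\lambda(t-s)} z(\cdot,s)\,ds$, a differentiation in $t$ shows $v_t+\lambda v - z=0$ with $v(0)=0$, so that the closed-loop integro-differential vorticity equation transforms into precisely the coupled closed-loop system \eqref{eqn:non-linear mainClsVort} with initial data $(z_0,0)=(w_0-w_\infty,0)$ and feedback $F(z,v)$. Applying Theorem \ref{th:stab non lin} with the smallness assumption $\|w_0-w_\infty\|_{H^1(\T^2)}\le M\rho$ and $\|w_\infty\|_{H^2(\T^2)}\le M\rho$ produces a unique solution pair $(z,v)$ in the regularity class stated there and, in particular, the exponential bound
\begin{equation*}
   \|(z(\cdot,t),v(\cdot,t))\|_{H^1(\T^2)\times H^2(\T^2)}\le C e^{-\nu t}\bigl(\|z_0\|_{H^1(\T^2)}+\|w_\infty\|_{H^1(\T^2)}^2+\|w_\infty\|_{H^2(\T^2)}\bigr).
\end{equation*}

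Finally, translating back to the original variables, the first term on the left gives the $H^1$ decay of $w-w_\infty$, while the second term $v$ coincides with $\int_0^t e^{-\lambda(t-s)}(w(s)-w_\infty)\,ds$ by construction and already lives in $H^2(\T^2)$ with the required decay. Existence and uniqueness in the integral formulation follow from uniqueness in the coupled formulation, since the two are equivalent once one reads off $v$ from $z$; conversely, any solution of the integral equation yields a solution of the coupled system through the same substitution. The smallness constants $\rho_0$, $M$, $\wt M$ are the ones furnished by Theorem \ref{th:stab non lin} (up to harmless absorptions).

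The only mildly delicate point — and I would flag it as the main thing to verify — is the consistency of the feedback between the integral and the coupled formulations: one must check that the feedback $F(z,v)$ appearing in the coupled system genuinely equals $\boldsymbol{F}w$ evaluated along the trajectory, which is immediate from the definition of $v$ above but should be recorded. All other ingredients (regularity, exponential weight, quadratic smallness in $\|w_\infty\|_{H^2}$) are inherited from Lemma \ref{lem:est} and Theorem \ref{th:stab non lin}, so no new fixed-point or regularity argument is required.
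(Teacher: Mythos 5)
Your proposal is correct and follows essentially the same route the paper intends: the authors state this theorem without proof, indicating that one mimics the passage from Theorem \ref{th:vstab non lin} to Theorem \ref{th:mainresultStabIntegral}, i.e.\ define $\boldsymbol{F}$ through the feedback $F$ of Theorem \ref{th:stb cnt}, introduce $v(\cdot,t)=\int_0^t e^{-\lambda(t-s)}(w(\cdot,s)-w_\infty)\,ds$ to pass to the coupled closed-loop system \eqref{eqn:non-linear mainClsVort}, and invoke Theorem \ref{th:stab non lin}. Your consistency check between the feedback in the integral and coupled formulations is exactly the (routine) point the paper leaves implicit.
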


\begin{Remark}
	As discussed in Remark \ref{rem:decayRate-NSE}, here also we obtain a decay rate bounded by $\lambda.$ However, in the case of stabilizability around a zero steady state, one can get any decay rate $\nu,$ for any $\nu \in (0,\nu_0).$ For more details, we refer to Remark \ref{rem:decayRate-NSE}. 
\end{Remark}

\begin{Remark}
	Here, we briefly discuss the three-dimensional case of vorticity equation which is a vector equation with additional vortex stretching term $-(\boldsymbol{w}\cdot \nabla) \boldsymbol{z}$, where $\boldsymbol{z}$  is the velocity field and $\boldsymbol{w}$ is the vorticity, which is   a vector (see \cite[p. 457]{BarbuVor}) in \eqref{eq:NSEMem_vort} and the stabilization results will be analogous by mimicking the calculations done in the case of (three-dimensional) Navier Stokes equation with memory  in the previous part of this article. For the sake of repetitions, we are not including here.
\end{Remark}

\bibliographystyle{amsplain}
\bibliography{references}

\end{document}